\newtheorem{theorem}{Theorem}[section]
\newtheorem{lemma}[theorem]{Lemma}
\newtheorem{proposition}[theorem]{Proposition}
\newtheorem{corollary}[theorem]{Corollary}
\theoremstyle{definition}
\newtheorem{definition}[theorem]{Definition}
\newtheorem{remark}[theorem]{Remark}
\newtheorem{example}[theorem]{Example}
\renewcommand{\div}{\mathrm{div}\,}				
\DeclareMathOperator{\supp}{\mathrm{supp}\,}
\renewcommand{\d}{\operatorname{d}\!} 		
\DeclareMathOperator{\ds}{\d s}				
\newcommand{\R}{\mathbb{R}}
\newcommand{\N}{\mathbb{N}}
\newcommand{\E}{\mathbb{E}} 
\newcommand{\T}{\mathbb{T}} 
\newcommand{\triple}{|\!|\!|}
\newcommand{\norm}[1]{\left\lVert#1\right\rVert} 
\newcommand{\abs}[1]{\left\vert#1\right\vert} 
\newcommand\RSloop{\@ifnextchar\bgroup\RSloopa\RSloopb}
\newcommand\RSloopa[1]{\bgroup\RSloop#1\relax\egroup\RSloop}
\newcommand\RSloopb[1]%
\newcommand\X{0}
\newcommand\RS[1]%
\newcommand\RSdef[1]{\expandafter\def\csname RS:#1\endcsname}
\newlength\RSu
\title{Surface Quasi-Geostrophic Equation driven by Space-Time White Noise}
\subjclass[2020]{Primary: 60H15;
	Secondary: 35A01, 35Q86, 60H17, 60L30, 86A05, 86A10.} 
\keywords{Surface Quasi-Geostrophic equation, nonlinear stochastic PDEs, regularity structures}
\author[Forstner]{Philipp Forstner}
\address{Technische Universit\"at Berlin, Berlin, Germany}
\email{forstner@math.tu-berlin.de}
\curraddr{Straße des 17. Juni 136, 10587 Berlin, Germany}
\author[Saal]{Martin Saal}
\address{Scuola Normale Superiore,
	Pisa, Italy}
\email[Corresponding author]{msaal@mathematik.tu-darmstadt.de}
\curraddr{TU Darmstadt, Schlossgartenstr. 7, 64289 Darmstadt, Germany}
\thanks{The first author gratefully acknowledges the financial support of the Deutsche Forschungsgemeinschaft (DFG) through IRTG 2544.  The second author acknowledges the financial support of the DFG through the research fellowship SA 3887/1-1.
}
\begin{document}

\begin{abstract}
We consider the Surface Quasi-Geostrophic equation (SQG) driven by space-time white noise and show the existence of a local in time solution by applying the theory of regularity structures. A main difficulty is the presence of Riesz-transforms in the non-linearity. We show how to lift singular integral operators with a particular structure to the level of regularity structures and using this result we deduce the existence of a solution to SQG by a renormalisation procedure. The fact that the Riesz-transforms act only in the spatial variable makes it necessary to use inhomogeneous models instead of the standard ones.
\end{abstract}

\maketitle


\section{Introduction}
The dissipative Surface Quasi-Gestrophic equation (SQG) in the torus $\mathbb{T}=\mathbb{R}^2\slash\mathbb{Z}^2$ with forcing $f$ is given by
\begin{equation*}
	\begin{split}
		& \partial_{t}\theta+u\cdot\nabla\theta + \nu (-\Delta)^{\mu}\theta =f,\\
		& u   =\nabla^{\perp}(-\Delta)^{-1/2}\theta=R^{\perp}\theta,
	\end{split}
\end{equation*}
where $\mu\in (0,1]$ and $\nu>0$.
Here, $R=(R_1, 
R_2)$ is the vector of Riesz-transforms,
$\nabla^{\perp}=(-\partial_{y},\partial_{x})$ and $u\cdot\nabla=u_1\partial_1+u_2\partial_2$. It has applications in both 
meteorological and oceanic flows, and it describes the temperature $\theta$ in 
a rapidly rotating stratified fluid with uniform potential vorticity. In 
mathematics, it received a lot of attention because of structural similarities 
to the 3D Navier-Stokes equations as pointed out in \cite{ConstantinMajdaTabak1} 
and \cite{CMT_1994},  e.g. vortex stretching. For simplicity, we normalize $\nu=1$.

For $\mu=1/2$ the equation has a geophysical application, it describes 
the evolution of the temperature on the 2D boundary of a rapidly 
rotating half-space with small Rossby and Ekman numbers, the dissipative 
term $(-\Delta)^{1/2}$ models the Ekman-pumping (\!\cite{Constantin2}). 
Dimensionally, it is the analogue of the 3D Navier-Stokes equations 
(\!\cite{ConstantinWu}). For more information on the geophysical background see \cite{HeldPierreGarnerSwanson, Pedlosky1987, Smithetal}.
There are several reasons to investigate stochastic aspects of PDEs with a background in fluid 
dynamics. Stochastics can on the one hand be used to account for numerical and empirical uncertainties 
and thus provide a way to study the robustness of a basic model. On the other hand complex phenomena 
related to turbulence may also be produced by stochastic perturbations and 
there are examples for intrinsic stochasticity within the physical model, see \cite{Hasselmann1976, 
	Holm2015, Memin2014, FrederiksenKaneZidikheri13, CrisanFlandoliHolm2018} and the 
references therein.  To take such 
effects into account, we will consider the SQG equation driven by space-time white noise.

Let us first give an overview of existence and uniqueness results for the system, both in the deterministic and stochastic setting.
Global weak solutions for 
initial data $\theta_0 \in L^2(\mathbb{T}^2)$ and $f=0$ exist for the full range $\mu\in (0,1)$ (\!\!\cite{Resnick}). Also, for initial data $\theta_0\in L^p(\mathbb{T}^2)$ 
($4/3 \leq p <\infty$) there are global weak solutions and one can go down to initial data $\theta_0\in H^{-1/2}(\mathbb{T}^2)$ and obtain 
corresponding global weak solutions (\!\!\cite{Marchand}). The uniqueness of these weak 
solutions is an open problem. 
The non-uniqueness of weak solutions 
with less regularity than the above ones is established by the technique of 
convex integration in \cite{BuckmasterShkollerVicol}.

For $\mu>1/2$ and
initial data $\theta_0\in H^{\beta}(\mathbb{T}^2)$ with $\beta+\mu>2$ strong 
solutions exist globally (\!\!\cite{ConstantinWu}).
The global regularity issue for $\mu=1/2$ was solved by different methods 
in \cite{CaffarelliVasseur2010} and \cite{KiselevNazarovVolberg2007}, for initial data in 
$L^2(\mathbb{T}^2)$ the solution is smooth for all times $t>0$ and exists globally in time.
For further results on the deterministic dissipative SQG equation and also for the inviscid SQG equation see \cite{Constantin} and the references therein. 

The stochastic SQG equation has been studied extensively in \cite{RZZ_2015}
with additive and multiplicative noise. They show for 
$\mu\in(0,1)$ the existence of weak solutions for multiplicative noise and the uniqueness and 
existence of pathwise strong solutions for $\mu>1/2$. Additionally, for additive noise and 
$\mu>1/2$ ergodicity, a law of large numbers and the polynomial convergence of the corresponding 
Markov semigroup to the invariant measure is established. If $\mu>\frac23$, they improve the result 
to exponential convergence by showing that not only the asymptotical strong Feller property but the 
strong Feller property itself holds. For $\mu>1/2$ large deviation principles are shown for small 
multiplicative noise and small times in \cite{LiuRoecknerZhu2013}, and the existence of a random attractor 
is proven if the noise is additive or linear multiplicative in \cite{ZhuZhu2017}.
In these works the noise is assumed to be a cylindrical Wiener process over some Hilbert space.

We will consider here the case of additive space-time white noise $\xi$, i.e. the equation
\begin{equation}\label{IntroductionSQG}
		\partial_{t}\theta+(R^\perp \theta)\cdot\nabla\theta +  (-\Delta)^{\mu}\theta =\xi.
\end{equation}

The methods of the previous works on the stochastic SQG cannot be applied to our situation due to the roughness of the noise. The Riesz-transforms leave the regularity of the function it is applied to invariant, see \cite{GRA_2014}, hence, a solution is not regular enough to give a meaning to the nonlinearity as the product of a distribution and a function in the classical way since both factors have negative regularity. Instead, the theory of regularity structures leads to the following result. For the definition of the function spaces see Subsection \ref{ss:notaion}.

\begin{theorem}\label{theorem:maintheorem}
	Let $\mu>2/3$,  $\eta\in(-2\mu+1,0)$, $\varepsilon>0$ and $\rho$ be a symmetric mollifier, $\rho_{\varepsilon}:=\rho(\cdot/\varepsilon)$ and $\xi_{\varepsilon}=\rho_{\varepsilon}\ast \xi$. Then for any initial condition $\theta_0\in C^\eta$ there is a $T>0$ such that the sequence of solutions $\theta_\varepsilon$ to 
	\begin{align*}
	\partial_{t}\theta_\varepsilon+(R^\perp \theta_\varepsilon)\cdot\nabla\theta_\varepsilon +  (-\Delta)^{\mu}\theta_\varepsilon =\xi_{\varepsilon},
	\end{align*}
	$\theta_{\varepsilon}(0)=\theta_0$ can be renormalised and the renormalized solutions $\hat{\theta}_{\varepsilon}$ converge in probability to a limit $\theta\in C^{\bar{\delta},-2+2\mu-2\kappa}_{\eta-1+\mu-2\kappa;T}$ ($\varepsilon\to 0$), where $0<\bar{\delta}<3\mu-2-\kappa$ for $\kappa$ sufficiently small.
	The limit $\theta$ is independent of the particular choice of the mollifier $\rho$.
\end{theorem}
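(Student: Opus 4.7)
The plan is to cast equation~\eqref{IntroductionSQG} into Hairer's theory of regularity structures adapted to the fractional heat operator $\partial_{t}+(-\Delta)^{\mu}$. Under the parabolic scaling $\mathfrak{s}=(2\mu,1,1)$, space-time white noise has Besov regularity $-1-\mu-\kappa$ for every $\kappa>0$; convolution against the fractional heat kernel $K$ gains $2\mu$ derivatives, so $\theta$ sits at regularity $\mu-1-\kappa$. Since $R^{\perp}$ preserves regularity while $\nabla$ costs one derivative, each Picard iteration improves by $3\mu-2$, and the equation is subcritical precisely when $\mu>2/3$, matching the hypothesis.

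First, I would build a regularity structure $\mathcal{T}$ generated by the noise symbol $\Xi$, the abstract integration $\mathcal{I}$ against $K$ together with its spatial derivatives, abstract integrations $\mathcal{R}_{j}$ ($j=1,2$) encoding the Riesz transforms, and the usual polynomial sector. The abstract fixed-point equation takes the form
\[
\Theta \;=\; \mathcal{P}_{+}\!\bigl[(\mathcal{R}^{\perp}\Theta)\cdot\nabla\Theta\bigr] \;+\; \mathcal{P}_{+}\Xi \;+\; P\theta_{0},
\]
where $\mathcal{P}_{+}$ is the positive-time restriction of integration against $K$ and $P\theta_{0}$ is the appropriate extension of the initial datum. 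Because the $R_{j}$'s act only in space, they do not commute with the usual parabolic decomposition of $K$; as emphasised in the introduction, I therefore work with inhomogeneous rather than homogeneous models, and lift each $R_{j}$ using the general singular-integral lifting theorem established earlier in the paper.

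Second, I would construct the canonical admissible model $Z_{\varepsilon}=(\Pi^{\varepsilon},\Gamma^{\varepsilon})$ associated with $\xi_{\varepsilon}$ and enumerate the finitely many symbols $\tau$ with homogeneity $|\tau|_{\mathfrak{s}}\le 0$ produced by iterating the right-hand side above; subcriticality guarantees finiteness. For each such $\tau$ one subtracts a divergent constant $C_{\varepsilon}^{(\tau)}$ via an element of the renormalisation group, producing $\hat{Z}_{\varepsilon}$. Nelson's hypercontractivity within each fixed Wiener chaos combined with Kolmogorov's criterion in the space of models then yields convergence in probability of $\hat{Z}_{\varepsilon}$ to a limit model $\hat{Z}$; mollifier-independence follows from the usual symmetry and moment-matching arguments.

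Third, the abstract equation is solved by Banach contraction in a weighted modelled-distribution space $\mathcal{D}^{\gamma,\eta}_{T}$ with $\gamma$ slightly larger than $2-2\mu$ (so that $\mathcal{P}_{+}$ is defined on the nonlinearity) and $\eta\in(-2\mu+1,0)$ (the range in which the $t\to 0$ singularity coming from $\theta_{0}\in C^{\eta}$ is still compatible with the abstract product $(\mathcal{R}^{\perp}\Theta)\cdot\nabla\Theta$). This yields a unique local-in-time solution $\Theta_{\varepsilon}$ depending continuously on the model. By the standard identification of the reconstruction with the classical renormalised PDE one has $\mathcal{R}^{\hat{Z}_{\varepsilon}}\Theta_{\varepsilon}=\hat{\theta}_{\varepsilon}$, and continuity of the solution map in the model then gives convergence of $\hat{\theta}_{\varepsilon}$ in probability to $\theta:=\mathcal{R}^{\hat{Z}}\Theta$ inside $C^{\bar{\delta},-2+2\mu-2\kappa}_{\eta-1+\mu-2\kappa;T}$ for any $\bar{\delta}<3\mu-2-\kappa$, and mollifier-independence is inherited from that of $\hat{Z}$. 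The hard part will be the combination of renormalisation and non-locality: one must verify that the abstract counterterms really correspond, for fixed $\varepsilon$, to a finite drift in the PDE, and that the stochastic diagrams involving Riesz-transformed instances of the noise remain convergent within the inhomogeneous-model formalism.
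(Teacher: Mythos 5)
Your proposal reproduces the paper's proof scheme: construct the SQG regularity structure in the inhomogeneous-model framework, lift the Riesz transforms via the singular-integral lifting theorem of Section~\ref{s:nonsmoothingkernels}, solve the abstract fixed point \eqref{SQGfixedPointAbstract} by contraction (Theorem~\ref{theprem:abstractsolvable}), and renormalize the canonical model using Nelson's estimate within the second Wiener chaos to obtain convergence in probability of the models, hence of the reconstructed solutions. One parameter slip worth noting: to reach the stated weight $\eta-1+\mu-2\kappa$ via the reconstruction theorem one must take $\gamma$ slightly above $1-\mu$ (the paper fixes $\gamma=1+2\kappa-\mu$), not above $2-2\mu$; also, the paper works with the divergence-form nonlinearity $(\nabla K)\ast(\theta\,R^{\perp}\theta)$, which avoids an explicit abstract derivative $\nabla\Theta$ at regularity $\mu-2-\kappa$ and keeps the product estimate at the level where both factors have regularity $\mu-1-\kappa$.
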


In general, the theory of regularity structures works as follows, see \cite{FriHai14} and \cite{Hairer_2014} for an introduction to the topic. 
First, in an algebraic step, one builds a regularity structure, associated to the equation at hand, rich enough to reformulate the equation as an abstract fixed point problem. Second, in an analytical step, the abstract fixed point problem will be solved using classical analysis theory like the Banach fixed point theorem. Third, in a probabilistic step, one reconstructs solutions with respect to more regular instances of the driving noise and ensures the convergence of the reconstructed solutions when removing the regularisation. The convergence cannot be expected for the sequence itself if the driving noise is rough, in these cases one needs to renormalise the solutions in order to obtain a limiting process.

This approach leads to a local in time solution for subcritical equations under mild assumptions on the nonlinearity which is shown in a series of papers  \cite{BrunedChandraChevyrevHairer2021}, \cite{BrunedHairerZambotti2019} and \cite{ChaHairer_2016}, where a black box theorem \cite[Theorem 2.22]{BrunedChandraChevyrevHairer2021} for local well-posedness of SPDEs is finally obtained. Subcritical means in this context that the smoothing of the linear part is stronger than the loss of regularity by the nonlinearity. For SQG it is easy to check that one needs  $\mu>2/3$ to guarantee this property.

However, before applying those results to SQG we first need to define the abstract version of the Riesz-transform, so that the analytic step can be carried out. Our result is Theorem \ref{TheoremAbstractIntegrationR} and it will turn out, that the lift of the Riesz-transform is a continuous (and linear) mapping on the function spaces considered to find solutions on the abstract level.

We face two difficulties compared to the lift of smoothing kernels carried out in \cite{Hairer_2014}. Firstly, we want to lift a convolution only in the spatial variables, which means that it is formally a convolution with the Dirac distribution in time and so the kernel is not regular enough to carry over the approach.

Secondly, the kernel of the Riesz-transform has a polynomial divergence at the origin whose order equals the spatial dimension, and so certain non-convergent series appear when following directly the ideas of \cite{Hairer_2014}.

To work around the first problem, we use inhomogeneous models introduced in \cite{hairer2017discretisations} where the time and spatial variables are stronger separated than in the standard setting. The disadvantage here is, that we need to be able to interpret all terms appearing in a representation of the solution as functions in time (and distributional valued). It will turn out, that in our case, this is no restriction but in general it can lead to stronger regularity assumptions compared to the standard models. Another approach we want to mention was applied in \cite{BerglundKuehn2019} to the Allen-Cahn equation where an integration in time has to be lifted to take into account a Fitz-Hugh Nagumo type equation. Here, the key observation is that not the non-smoothing kernel itself has to be lifted but that it is sufficient to lift the composition of the kernel describing the linear smoothing with this non-smoothing one. In this case, the standard models can be used, but the combined kernel has a singularity along the $t=0$ hyperplane. That approach is not convenient if the dimension of the hyperplane is large, so we do not follow it to lift a convolution acting in all spatial variables.

To tackle the second problem we need more structure of the kernel. Our proofs are based on the assumption that the associated kernel can be written as the derivative of a smoothing kernel. This allows us to shift derivatives from the kernel to test-functions. The fact, that we need more structure than just a certain scaling behaviour near the singularity is not only a technical point. One cannot expect to be able to lift any kernel that behaves like $|x|^{-d}$, since these kernels do not induce a well-behaved operator, see Example \ref{example}. 

Examples of operators which can be lifted by our method include the Riesz-transforms, the Helmholtz resp. Leray projection and also the Bogovskii-operator.

Having lifts of all analytic operations at hand to formulate SQG on the level of regularity structures, we then turn to \eqref{IntroductionSQG}. We construct the regularity structure associated to SQG and perform the analytic step to show that the lifted equation has a unique solution for all $\mu>2/3$ in Theorem \ref{theprem:abstractsolvable}.
The nonlinearity has certain symmetries leading to cancellations, which helps to handle it in spaces of low regularity (see e.g.  \cite{FlandoliSaal, BuckmasterShkollerVicol}). We carry out the renomralisation in detail for $\mu>4/5$ on the one hand to provide a self-contained result for readers not familiar with \cite{BrunedChandraChevyrevHairer2021} and on the other hand to show that in this case no renormalisation constants in the approximating equations appear when carrying out the limit process. 
The local existence theorem is a first step towards the treatment of the critical case $\mu=2/3$. A similar attempt to obtain information about the critical case by taking out the regularization of the linear part has been done in \cite{BerglundKuehn2017}. There a fractional heat equation with polynomial nonlineartiy is assumed and they can proof a number of results on the corresponding model space when approaching the critical case. Anyhow, in their nonlinearity no such cancellation as in SQG appear, so our situation is more promising to obtain stronger results in a forthcoming work.
Note that for $\mu=1$ the situation is similar to the 2D Navier-Stokes equation and one can proceed as in \cite{DaPratoDebussche2002} by the Da Prato-Debuschee argument.

In the context of fluid dynamics regularity structures have been applied to the three-dimensional Navier-Stokes equations in \cite{ZhuZhu_2014_2}. Note that this preprint differs substantially from the published version \cite{ZhuZhu_2014}, where the Helmholtz projection has not been handled. Anyhow, the results in \cite{ZhuZhu_2014_2} on the lift of non-smoothing operators are too strong, the authors claim that any operator which is 0-smoothing in the sense of \cite{Hairer_2014} can be lifted. Hence, we don't use their results to lift the Riesz-transform.

This paper is organized as follows. In the following Subsection \ref{ss:notaion} we list the notation used throughout this work. In Section \ref{s:nonsmoothingkernels} we give a brief introduction into regularity structures with inhomogeneous models and we show how non-smoothing singular integral operators can be lifted to the level of regularity structures.
In Section \ref{s:sqg} we apply that result to construct the regularity structure for SQG and to obtain the local well-posedness of the lift \eqref{IntroductionSQG} and give some details on the renormalisation.

\subsection{Notation}\label{ss:notaion}
In Section 2  we will work in $\R^{1+d}$, with $d$ being the spatial dimensions, for the application to SQG in Section 3 we then have $d=2$. For $s_0\in\N$ we consider the scaling $\mathfrak s:=(s_0,1,\dots,1)$ on $\R^{1+d}$ and set $|\mathfrak{s}|=s_0+d$. For $x\in\R^d$ we denote the norm by $|x|:=\sup_{1\leq i \leq d}|x_i|$ and for $(t,x)=(t,x_1,\dots,x_d)\in\R^{1+d}$ we define the scaled norm $\|(t,x)\|_{\mathfrak s}=|t|^{1/s_0} \lor |x|$. For $k=(k_0,\dots,k_d)\in \N_0^{1+d}$ we set $|k|_s:=s_0k_0+\sum_{i=1}^d k_i$. We use the 
multiindex notation $x^k=x_1^{k_1}\cdots x_d^{k_d}$ and  $(t,x)^k=t^{k_0} x_1^{k_1}\cdots x_d^{k_d}$ if $k\in \N^d_0$ or resp. in $\N_0^{d+1}$.\\
Furthermore, if $s,t\in\R$ we write $|s,t|_0:=|t|^{1/s_0}\land|s|^{1/s_0}\land 1 $ and $|t|_0:=|t,t|_0=|t|^{1/s_0} $.

By $\mathcal{S}'(\R^d)$ we denote as usual the space of all Schwartz distributions. Given an integer $r>0$, we define 
\begin{align*}
\mathcal{B}_r\coloneqq\{\varphi\in C^r(\R^d,\R): \norm{\varphi}_{C^r}\leq 1\text{ and }\supp{\varphi}\subset B_1(0)\}
\end{align*}
and for $\varphi\in \mathcal{B}_r$, $x\in\R^d$ and $\lambda\in(0,1]$, we define the scaled test-function $\varphi_x^\lambda$ by
\begin{align*}
\varphi_x^\lambda(y)\coloneqq\lambda^{-d}\varphi(\lambda^{-1}(y-x)) \qquad (y\in\R^d).
\end{align*}
The space of $\alpha$-H\"older continuous functions on $\R^d$ is for $\alpha\in (0,1)$ defined by
\begin{multline*}
C^\alpha(\R^d)\coloneqq\{f\in C^0(\R^d):  \abs{f(x)-f(y)}\leq C\abs{x-y}^\alpha
  \text{ uniformly in } x,y\in\R^d\}.
\end{multline*}
For $\alpha = n + \beta$ with $n\in\N_0$ and $\beta\in (0,1)$ the space $C^{\alpha}(\R^d)$ consists of all $n$-times continuously differentiable functions, such that all $n$-th derivatives of $f$ are in $C^{\beta}(\R^d)$.\\
If $\alpha <0$ let  $r>-\alpha$. We define for $\xi \in\mathcal{S}'(\R^d)\cap (C^r_c(\R^d))'$ where $(C^r_c(\R^d))'$ denotes the dual of all $C^r(\R^d)$ functions with compact support
\begin{align*}
\|\xi\|_{C^\alpha}:=\sup_{x\in\R^d} \sup_{\lambda\in(0,1]} \sup_{\varphi\in\mathcal{B}_r} \lambda^{-\alpha}\abs{\langle\xi,\varphi_x^\lambda\rangle}
\end{align*}
and set
\begin{align*}
C^{\alpha}(\R^d)\coloneqq\{\xi & \in\mathcal{S}'(\R^d)\cap (C^r_c(\R^d))' :\|\xi\|_{C^{\alpha}}<\infty \}.
\end{align*}
We write $C^{\alpha}$ for $C^{\alpha}(\R^d)$  to shorten the notation.\\
Finally, for a function $\xi:(0,T]\to C^{\alpha}$, where $T>0$, $\alpha\in\R\setminus\N_0$ we define for $\delta>0,\eta\leq 0$ with $\alpha-\delta\notin \N_0$ the weighted norm
\begin{align*}
\|\xi\|_{C^{\delta,\alpha}_{\eta;T}}:= \sup_{t\in(0,T]} |t|_0^{-\eta}\|\xi(t)\|_{C^{\alpha}}+\sup_{s\neq tt\in(0,T]} \frac{\|\xi(t)-\xi(s)\|_{C^{\alpha-\delta}}}{|t,s|_0^{\eta} |t-s|^{\delta/s_0}}
\end{align*}
and 
\begin{align*}
	C^{\delta,\alpha}_{\eta;T}\coloneqq\{\xi:(0,T]\to C^{\alpha}  :\|\xi\|_{C^{\delta,\alpha}_{\eta;T}}<\infty \}.
\end{align*}
For the evaluation of a function at some $t$ we will use both notations $\xi_t=\xi(t)$.\\

For space-time test-functions for which we reserve the notation $\tilde{\varphi}$. For $(t,x)\in\R^{1+d}$ the scaled test-functions are then 
\begin{align*}
\tilde{\varphi}_{t,x}^\lambda(s,y)\coloneqq\lambda^{-s_0-d}\tilde{\varphi}(\lambda^{-s_0}(t-s),\lambda^{-1}(y-x)) \qquad (s\in\R,y\in\R^d)
\end{align*}
and define the corresponding space $C^\alpha$ as above. The convolution in time and space we denote by $\ast$ and the convolution only in the spatial variable by $\ast_{\R^d}$.

\section{Regularity structures and non-smoothing kernels}\label{s:nonsmoothingkernels}
In this section we will give the definitions and results in the case that the domain is $\R^{1+d}$, but all the results also hold in the torus.

\begin{definition}
A \textit{regularity structure} $\mathscr{T}=(A,\mathcal{T},G)$ consists of the following elements:
\begin{itemize}
\item An index set $A\subset\R$ such that $0\in A$, $A$ is bounded from below and locally finite.
\item A \textit{model space} $\mathcal{T}$, which is a graded vector space $\mathcal{T}=\bigoplus_{\alpha\in A}\mathcal{T}_\alpha,$ with each $\mathcal{T}_\alpha$ a Banach space. Elements in $\mathcal{T}_\alpha$ are said to have \textit{homogeneity} $\alpha$. Furthermore, $\mathcal{T}_0=\langle {\color{blue}1}\rangle\cong\R$. Given ${\color{blue}\tau}\in T$, we will write $\norm{{\color{blue}\tau}}_\alpha$ for the norm of its component in $\mathcal{T}_\alpha$.
\item A \textit{structure group} $G$ of linear operators acting on $\mathcal{T}$ such that, for every $\Gamma\in G$, every $\alpha\in A$, and every ${\color{blue}{\tau_\alpha}}\in \mathcal{T}_\alpha$, one has
\begin{equation*}
\Gamma{\color{blue}\tau_\alpha}-{\color{blue}\tau_\alpha}\in \mathcal{T}_{<\alpha}\coloneqq\bigoplus_{\beta <\alpha} \mathcal{T}_\beta.
\end{equation*}
Furthermore, $\Gamma{\color{blue}1}={\color{blue}1}$ for every $\Gamma\in G$.
\end{itemize}
\end{definition}

For the neutral element in $G$ we will simply write $1$. One easy example is the polynomial regularity structure. We will need it later on, so let us give the 
precise definition.

\begin{definition}\label{Def:PolynomialRegularityStructure}
The \textit{polynomial regularity structure} $\mathscr{T}_P=(A,\mathcal{T},G)$ on $\R\times\R^{d}$ is given by:
	\begin{itemize}
		\item The set of homogeneities $A=\N_0$.
		\item The model space $\mathcal{T}=\bigoplus_{\alpha\in\N_0}\langle{\color{blue}X^k}:k\in\N_0^d,\abs{k}_s=\alpha\rangle=\R[{\color{blue}X_0},...,{\color{blue}X_d}]$.
		\item The structure group $G\coloneqq\{\Gamma_h\in\mathcal{L}(\mathcal{T},\mathcal{T}): h\in\R\times\R^d\}\sim (\R^{1+d},+)$ acting on $\mathcal{T}$ via 
		$$\Gamma_h P({\color{blue}X})\coloneqq P({\color{blue}X}+h{\color{blue}1}),\quad h\in\R\times\R^{d},$$
		for any polynomial $P$.
	\end{itemize}
\end{definition}

To associate a distribution to an abstract element $\tau \in \mathcal{T}$, so-called models are used. 

\begin{definition}
Given a regularity structure $\mathscr{T}=(A,\mathcal{T},G)$, an \textit{inhomogeneous model} $M=(\Pi,\Gamma, \Sigma)$ for $\mathscr{T}$ on $\R\times \R^{d}$ consists of maps
\begin{align*}
\Pi 	&:\R\times \R^d \rightarrow\mathcal{L}\left(\mathcal{T},\mathcal{S}'(\R^d)\right),\quad (t,x)\mapsto\Pi_x^t,\\
\Gamma 	&:\R\times \R^d\times\R^d\rightarrow G,\quad (t,x,y)\mapsto\Gamma_{xy}^t\\
\Sigma 	&:\R\times \R \times\R^d\rightarrow G,\quad (t,s,x)\mapsto\Sigma_{x}^{ts}
\end{align*}
such that for $x,y,z\in\R^d, t,s,r \in \R$
\begin{align*}
\Gamma_{xx}^{t}&=1, \qquad \Gamma_{xy}^{t}\Gamma_{yz}^{t}=\Gamma_{xz}^{t}\\
\Sigma_{x}^{tt}&=1, \qquad \Sigma_{x}^{sr}\Sigma_{x}^{rt}=\Sigma_{x}^{st}
\end{align*}
and
\begin{align*}
\Sigma_{x}^{st}\Gamma_{xy}^{t}=\Gamma_{xy}^{s}\Sigma_{y}^{st}\\
\Pi_x^t\Gamma_{xy}^{t}=\Pi_y^t.
\end{align*}
Furthermore, for every $T>0$ and every $\gamma>0$, there exists a constant $C=C(T,\gamma)$ such that the bounds
\begin{align}\label{ModelEstimates}
\begin{split}
\abs{\langle\Pi_x^t{\color{blue}\tau},\varphi_x^\lambda\rangle} & \leq C\lambda^\alpha\norm{{\color{blue}\tau}}_\alpha,\\
\quad \norm{\Gamma_{xy}^t{\color{blue}\tau}}_\beta & \leq C\abs{x-y}^{\alpha-\beta}\norm{{\color{blue}\tau}}_\alpha,\\
\quad \norm{\Sigma_{x}^{ts}{\color{blue}\tau}}_\beta & \leq C\abs{t-s}^{(\alpha-\beta)/s_0}\norm{{\color{blue}\tau}}_\alpha
\end{split}
\end{align}
hold uniformly over $\varphi\in\mathcal{B}_k,\ x,y\in\R^d, t,s\in[-T,T] , \lambda\in (0,1],\ {\color{blue}\tau}\in \mathcal{T}_\alpha$ with $\beta<\alpha\leq\gamma$ and $k>\abs{\min{A}}$.\\
We say that $\Pi$ has time regularity $\delta>0$ if additionally
\begin{align}\label{ModelEstimatesTimeRegularity}
\abs{\langle\Pi_x^t{\color{blue}\tau}-\Pi_x^t{\color{blue}\tau},\varphi_x^\lambda\rangle}\leq C \abs{t-s}^{\delta/s_0} \lambda^{\alpha-\delta}\norm{{\color{blue}\tau}}_\alpha
\end{align}
holds under the above restrictions.
\end{definition}

\begin{remark}\label{Rem:Model}
\begin{itemize}
\item[a)] The map $\Pi_x$ realizes abstract elements as a distribution with base point $(t,x)$ while the map $\Gamma_{xy}^t$ together with the identity $\Pi_x^t\Gamma_{xy}^t=\Pi_y^t$ reflects the turning of an expansion around $y$ into an expansion around $x$. Respectively, $\Sigma$ describes shifts in time.
\item[b)] If we have an inhomogeneous model $M=(\Pi,\Gamma, \Sigma)$ given we define $\tilde M:=(\tilde \Pi,\tilde \Gamma)$ by setting
\begin{align*}
(\tilde \Pi_{(t,x)}\tau)(\tilde\varphi)&:= \int_\R (\Pi^s_x\Sigma^{st}_x\tau)(\tilde \varphi(s,\cdot)) \d s,\\
\tilde \Gamma_{(t,x)(s,y)}&:= \Gamma^{t}_{xy}\Sigma^{ts}_{y}=\Sigma^{ts}_{x}\Gamma^{s}_{xy}
\end{align*}
for any test-function $\tilde\varphi:\R\times \R^d$ and $(t,x),(s,y)\in \R^{1+d}$. Then $\tilde M$ is a model in the sense of \cite{Hairer_2014}.
\item[c)] In the applications we will only need to consider a finite set of homogeneities bounded by some $\gamma>0$ and the corresponding subset $\mathcal{T}_{<\gamma}:=\oplus_{\alpha\in A,\alpha<\gamma} \mathcal{T}_\alpha$ of the model space $\mathcal{T}$.
\item[d)] For an inhomogeneous given model $M=(\Pi,\Gamma, \Sigma)$ on a regularity structure $\mathscr{T}$ and $\gamma>0$ we denote by $\|\Pi\|_{\gamma;T}, \|\Gamma\|_{\gamma;T}$ and $\|\Sigma\|_{\gamma;T}$ the smallest constants such that the bounds \eqref{ModelEstimates} hold. If the model has time regularity $\delta>0$ we denote by $\|\Pi\|_{\delta,\gamma;T}$ the smallest constant such that \eqref{ModelEstimatesTimeRegularity} holds. We set
\begin{align*}
\|M\|_{\gamma;T}&:= \|\Pi\|_{\gamma;T}+ \|\Gamma\|_{\gamma;T} + \|\Sigma\|_{\gamma;T},\\
\|M\|_{\delta,\gamma;T}&:= \|\Pi\|_{\delta,\gamma;T}+ \|M\|_{\gamma;T}.
\end{align*}
If $\overline M:=(\overline \Pi,\overline \Gamma,\overline \Sigma)$ is second model, then in general $(\Pi-\overline \Pi,\Gamma-\overline \Gamma,\Sigma-\overline \Sigma)$ is not a model, but the bounds \eqref{ModelEstimates} and \eqref{ModelEstimatesTimeRegularity} still make sense and we define
\begin{align*}
\|M;\overline M\|_{\gamma;T}&:= \|\Pi-\overline \Pi\|_{\gamma;T}+ \|\Gamma-\overline \Gamma\|_{\gamma;T} + \|\Sigma-\overline \Sigma\|_{\gamma;T},\\
\|M;\overline M\|_{\delta,\gamma;T}&:= \|\Pi-\overline \Pi\|_{\delta,\gamma;T}+ \|M-\overline M\|_{\gamma;T}.
\end{align*}
\end{itemize}
\end{remark}

With the definition of an inhomogeneous model at hand, we are now able to construct such an 
inhomogeneous model for the polynomial regularity structure $\mathscr{T}_P$ given in Definition \ref{Def:PolynomialRegularityStructure}. 

\begin{definition}
For $\mathscr{T}_P$ we define the inhomogeneous model $M=(\Pi,\Gamma, \Sigma)$ by 
\begin{align}\label{PolynomialRegularityStructureModel}
		\begin{split}
			\Pi_x^t{\color{blue}X}^k&:=\begin{cases} (x-y)^{\overline k}, &k_0=0 \\ 0 , & k_0>0, \end{cases}\\
			 \Gamma_{xy}^{t}&:=\Gamma_{(0,x-y)},\\
			 \Sigma_{x}^{ts}&:=\Gamma_{(t-s,0)}
		\end{split}
\end{align}
for $x,y\in\R^d, t,s\in \R$ and $k=(k_0,\overline k)\in \N_0\times \N_0^d$, where $\Gamma_h$ is defined as in Definition \ref{Def:PolynomialRegularityStructure}. 
\end{definition}

\begin{remark}\label{Rem:PolynomialModel}	
\begin{itemize}
\item[a)] We defined $\Pi_x^t$ here as a function and not as a distribution, this has to be interpreted as the regular distribution generated by the function.
\item[b)] If we define the model $\tilde M$ as in \ref{Rem:Model} b) then it acts on the polynomials in the usual way.
\end{itemize}
\end{remark}

For functions taking values in $\mathcal{T}_{<\gamma}$ we have a norm (weighted in time) and we can define the function space
of modelled distributions using this norm.

\begin{definition}
Let $\mathscr{T}$ be a regularity structure equipped with an inhomogeneous model $M=(\Pi, \Gamma,\Sigma)$ over $\R\times\R^d$ and $\gamma>0$, $\eta\in\R$.\\
For $T>0$ and ${\color{blue}F}:(0,T]\times\R^d\to \mathcal{T}_{<\gamma}$ we define 
\begin{align}\label{ModelledDistributionEstimate}
\begin{split}
\|{\color{blue}F}\|_{\infty;\gamma,\eta;T}
&:=\sup_{t\in (0,T]} \sup_{x\in\R^d} \sup_{m\in A,m<\gamma} |t|_0^{(m-\eta) \lor 0} \|{\color{blue}F}(t,x)\|_m,\\
[{\color{blue}F}]_{\gamma,\eta;T}^{\text{space}}
& := \sup_{t\in (0,T]} \sup_{x,y\in\R^d, |x-y|\leq 1} \sup_{m\in A,m<\gamma} \frac{\|{\color{blue}F}(t,x)-\Gamma^{t}_{xy}{\color{blue}F}(t,y)\|_m}{|t|_0^{\eta-\gamma}|x-y|^{\gamma-m}}\\
[{\color{blue}F}]_{\gamma,\eta;T}^{\text{time}}
&:=\sup_{t,s\in (0,T],|t-s|\leq |t,s|_0^{s_0}} \sup_{x\in\R^d} \sup_{m\in A,m<\gamma} \frac{\|{\color{blue}F}(t,x)-\Sigma^{ts}_{x}{\color{blue}F}(s,x)\|_m}{|t,s|_0^{\eta-\gamma}|t-s|^{(\gamma-m)/s_0}},\\
\|{\color{blue}F}\|_{\gamma,\eta;T}
&:= \|{\color{blue}F}\|_{\infty;\gamma,\eta;T} + [{\color{blue}F}]^{\text{space}}_{\gamma,\eta;T}\\
\triple {\color{blue}F}\triple _{\gamma,\eta;T}
&:= \|{\color{blue}F}\|_{\gamma,\eta;T} + [{\color{blue}F}]^{\text{time}}_{\gamma,\eta;T}
\end{split}
\end{align}
and the space of \textit{inhomogeneous modelled distributions}
\begin{align*}
\mathscr{D}^{\gamma,\eta}(T;M):=\{{\color{blue}F}:(0,T]\times\R^d\to \mathcal{T}_{\gamma}| \triple {\color{blue}F}\triple _{\gamma,\eta;T}<\infty \}.
\end{align*}
If $\overline M=(\overline\Pi, \overline\Gamma,\overline\Sigma)$ is a second model and $\mathscr{D}^{\gamma,\eta}(T;\overline{M})$ the corresponding space of modelled distribution, we define the distance between functions ${\color{blue}F}\in  \mathscr{D}^{\gamma,\eta}(T;M)$ and $\overline{{\color{blue}F}}\in \mathscr{D}^{\gamma,\eta}(T;\overline{M})$ as
\begin{align*}
&\|{\color{blue}F};\overline{{\color{blue}F}}\|_{\gamma,\eta;T}:=  \|{\color{blue}F}-\overline{{\color{blue}F}}\|_{\infty;\gamma,\eta;T}\\
& +\sup_{t\in (0,T]} \sup_{x,y\in\R^d, |x-y|\leq 1} \sup_{m\in A,m<\gamma} \frac{\|{\color{blue}F}(t,x)-\Gamma^{t}_{xy}{\color{blue}F}(t,y)-(\overline{{\color{blue}F}}(t,x)-\overline{\Gamma}^{t}_{xy}\overline{{\color{blue}F}}(t,y))\|_m}{|t|_0^{\eta-\gamma}|x-y|^{\gamma-m}},\\
&\triple{\color{blue}F};\overline{{\color{blue}F}}\triple_{\gamma,\eta;T}:=  \|{\color{blue}F}-\overline{{\color{blue}F}}\|_{\infty;\gamma,\eta;T} \\
& +\sup_{t,s\in (0,T]} \sup_{x\in\R^d} \sup_{m\in A,m<\gamma} \frac{\|{\color{blue}F}(t,x)-\Gamma^{t}_{xy}{\color{blue}F}(t,x)-(\overline{{\color{blue}F}}(t,x)-\overline{\Sigma}^{ts}_{x}\overline{{\color{blue}F}}(s,x))\|_m}{|t,s|_0^{\eta-\gamma}|t-s|^{(\gamma-m)/s_0}}.
\end{align*}
\end{definition}

\begin{remark}\label{Rem:ModelledDistributions}
\begin{itemize}
\item[a)] We will often write $F_t:=F(t,\cdot)$.
\item[b)] Let us emphasis that the space $\mathscr{D}^\gamma(T;M)$ depends on $\Gamma$ and $\Sigma$, but not on $\Pi$. We will drop the dependence on the model from time to time.
\item[c)] Let $M=(\Pi, \Gamma,\Sigma)$  be an inhomogeneous model and $\tilde M=(\tilde \Pi, \tilde \Gamma)$ be the model defined in Remark \ref{Rem:Model}. Then the corresponding spaces of modelled distributions $\tilde D^{\gamma,\eta}(T;\tilde M)$ defined in \cite{Hairer_2014} satisfy $\tilde D^{\gamma,\eta}(T;\tilde M)\subset D^{\gamma,\eta}(T; M)$ (in general $\subsetneq$).
\end{itemize}
\end{remark}

One of the most important results is the existence of the reconstruction operator, which is a mapping from $(0,T]\times\mathscr{D}^{\gamma,\eta}(T;M)$ onto the space of H\"older distributions $C^{\alpha}(\R^d)$. In case of an inhomogeneous model with time regularity $\delta>0$ this yields functions in a weighted space $C^{\overline\delta,\alpha}_{\eta-\gamma;T}$.

\begin{theorem}[Reconstruction]\label{ReconstructionTheorem}
Let $M=(\Pi,\Gamma, \Sigma)$ be an inhomogeneous model for a regularity structure $\mathscr{T}$ on $\R\times\R^d$, $\alpha:=0 \land \min A$ and $\gamma >0$, $\eta \in \R$.
\begin{itemize}
\item[(i)]  There exists a unique linear map
\begin{align*}
\mathcal{R}:(0,T]\times \mathscr{D}^{\gamma,\eta}(T;M)\rightarrow C^{\alpha}(\R^d)
\end{align*}
such that
\begin{align}\label{ReconstructionTheoremEstimate}
\abs{\left(\mathcal{R}_t{\color{blue}F}_t-\Pi_x^t{\color{blue}F}_t(x)\right)(\varphi_x^\lambda)}\leq C\lambda^\gamma |t|_0^{\eta-\gamma} \|{\color{blue}F}\|_{\gamma,\eta;T} \|\Pi\|_{\gamma;T},
\end{align}
uniformly over all $\varphi\in\mathcal{B}_r$, $\lambda\in (0,1]$, ${\color{blue}F}\in\mathscr{D}^{\gamma,\eta}(T;M)$, $t\in(0,T]$ and $x\in\R^d$, where $r$ is the smallest integer such that $r>|\alpha|$.
\item[(ii)] If $\Pi$ has additionally time regularity $\delta>0$ then for any $\overline \delta\in (0,\delta]$ such that $\overline{\delta}<m-\zeta$ for all $\zeta,m\in A \cup \{\gamma\}$ with $\zeta<m\leq \gamma$ the function $t\mapsto \mathcal{R}_t{\color{blue}F}_t$ satisfies
\begin{align*}
\|\mathcal{R}_\cdot{\color{blue}F}_\cdot\|_{C^{\overline\delta,\alpha}_{\eta-\gamma;T}}\leq C \|\Pi\|_{\delta,\gamma;T} (1+\|\Sigma\|_{\gamma;T})\triple {\color{blue}F}\triple _{\gamma,\eta;T}.
\end{align*}
\item[(iii)] Let $\overline M=(\overline\Pi,\overline\Gamma,\overline\Sigma)$ be another model for $\mathscr{T}$ and $\overline{\mathcal{R}}$ be the corresponding operator as in (i). If $\Pi$ and $\overline{\Pi}$ have time regularity $\delta >0$  and $\overline{\delta}$ is as in (ii), for every ${\color{blue}F}\in \mathscr{D}^{\gamma,\eta}(T;M)$ and $\overline{{\color{blue}F}}\in \mathscr{D}^{\gamma,\eta}(T;\overline{M})$ the maps $t\mapsto \mathcal{R}_t{\color{blue}F}_t$ and $t\mapsto \overline{\mathcal{R}}_t\overline{{\color{blue}F}}_t$ satisfy
\begin{align*}
\|\mathcal{R}_\cdot{\color{blue}F}_\cdot-\mathcal{R}_\cdot\overline{{\color{blue}F}}_\cdot\|_{C^{\overline\delta,\alpha}_{\eta-\gamma;T}}\leq C \|\Pi;\overline \Pi\|_{\delta,\gamma;T} \triple {\color{blue}F};\overline{{\color{blue}F}}\triple _{\gamma,\eta;T},
\end{align*}
where $C$ depends on the maximum of $\|\Pi\|_{\delta,\gamma;T},\|\overline \Pi\|_{\delta,\gamma;T}, \triple {\color{blue}F}\triple _{\gamma,\eta;T}$ and \\ $\triple \overline{{\color{blue}F}}\triple _{\gamma,\eta;T}$.
\end{itemize}
\end{theorem}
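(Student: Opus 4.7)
For part (i), the key observation is that, at each fixed $t\in(0,T]$, the pair $(\Pi^{\cdot\,t},\Gamma^{t}_{\cdot\cdot})$ obtained by freezing the time index is an ordinary spatial model on $\R^d$ in the sense of Hairer, and the map $x\mapsto {\color{blue}F}_t(x)$ belongs to the associated space of modelled distributions with norm controlled by $|t|_0^{\eta-\gamma}\|{\color{blue}F}\|_{\gamma,\eta;T}$ (by inspecting the $m=\gamma$ case of $\|{\color{blue}F}\|_{\infty;\gamma,\eta;T}$ and $[{\color{blue}F}]^{\mathrm{space}}_{\gamma,\eta;T}$). The standard reconstruction theorem (see e.g.\ Theorem 3.10 of \cite{Hairer_2014}) then produces a unique $\mathcal{R}_t{\color{blue}F}_t\in C^{\alpha}(\R^d)$ satisfying \eqref{ReconstructionTheoremEstimate}. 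Combining with a standard bound on $\langle\Pi^t_x{\color{blue}F}_t(x),\varphi^\lambda_x\rangle$ against $\varphi^\lambda_x$ decomposes into homogeneities and gives $\|\mathcal{R}_t{\color{blue}F}_t\|_{C^\alpha}\lesssim |t|_0^{\eta-\gamma}\,\|\Pi\|_{\gamma;T}\,\|{\color{blue}F}\|_{\gamma,\eta;T}$.

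For part (ii), the crux is to estimate $\langle\mathcal{R}_t{\color{blue}F}_t-\mathcal{R}_s{\color{blue}F}_s,\varphi^\lambda_x\rangle$. I would argue by cases in $\lambda$ versus $|t-s|^{1/s_0}$. In the \emph{large scale} regime $\lambda^{s_0}\ge |t-s|$, I insert the telescoping identity
\begin{align*}
\mathcal{R}_t{\color{blue}F}_t-\mathcal{R}_s{\color{blue}F}_s
&=\bigl[\mathcal{R}_t{\color{blue}F}_t-\Pi_x^t{\color{blue}F}_t(x)\bigr]
-\bigl[\mathcal{R}_s{\color{blue}F}_s-\Pi_x^s{\color{blue}F}_s(x)\bigr]\\
&\quad+\Pi_x^t\bigl({\color{blue}F}_t(x)-\Sigma^{ts}_x{\color{blue}F}_s(x)\bigr)
+\bigl(\Pi_x^t-\Pi_x^s\bigr)\Sigma^{ts}_x{\color{blue}F}_s(x).
\end{align*}
The first two brackets are bounded by \eqref{ReconstructionTheoremEstimate} and give $\lambda^\gamma|t,s|_0^{\eta-\gamma}$, which is converted to $\lambda^{\alpha-\bar\delta}|t-s|^{\bar\delta/s_0}|t,s|_0^{\eta-\gamma}$ by interpolation using $|t-s|^{1/s_0}\le\lambda$. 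The third term is estimated homogeneity-by-homogeneity using \eqref{ModelEstimates} and $[{\color{blue}F}]^{\mathrm{time}}_{\gamma,\eta;T}$, producing sums of the form $\sum_m\lambda^m|t-s|^{(\gamma-m)/s_0}|t,s|_0^{\eta-\gamma}$, each again absorbed to $\lambda^{\alpha-\bar\delta}|t-s|^{\bar\delta/s_0}$ via the regime assumption (this requires $\bar\delta<m-\zeta$ exactly to guarantee that $\gamma-m-\bar\delta$ stays nonnegative at each step). The fourth and most delicate term is where $\Pi$-time-regularity is really used: expanding $\Sigma^{ts}_x{\color{blue}F}_s(x)$ by \eqref{ModelEstimates} and then applying \eqref{ModelEstimatesTimeRegularity} yields $\sum_{m\le n}\|\Sigma\|_{\gamma;T}\|\Pi\|_{\delta,\gamma;T}|t-s|^{(n-m+\delta)/s_0}\lambda^{m-\delta}$, and once more the regime condition on $\lambda$ and $\bar\delta\le\delta$ convert this into the target bound. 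In the \emph{small scale} regime $\lambda^{s_0}<|t-s|$, I simply use the triangle inequality $|\langle\mathcal{R}_t{\color{blue}F}_t-\mathcal{R}_s{\color{blue}F}_s,\varphi^\lambda_x\rangle|\le|\langle\mathcal{R}_t{\color{blue}F}_t,\varphi^\lambda_x\rangle|+|\langle\mathcal{R}_s{\color{blue}F}_s,\varphi^\lambda_x\rangle|$, bound each summand by part (i) (giving $\lambda^\alpha|t,s|_0^{\eta-\gamma}$), and trade a factor $\lambda^{\bar\delta}$ against $|t-s|^{\bar\delta/s_0}$. Combining the two regimes, dividing by $\lambda^{\alpha-\bar\delta}$ and taking the suprema gives the required $C^{\bar\delta,\alpha}_{\eta-\gamma;T}$-bound.

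Part (iii) follows by the same decomposition applied to the difference $\mathcal{R}_t{\color{blue}F}_t-\overline{\mathcal{R}}_t\overline{{\color{blue}F}}_t$: writing each of the four brackets as a telescoping difference of the corresponding bracket for $(M,{\color{blue}F})$ and for $(\overline M,\overline{{\color{blue}F}})$, each split contains either a difference of models (bounded by $\|M;\overline M\|_{\delta,\gamma;T}$ times a norm of ${\color{blue}F}$ or $\overline{{\color{blue}F}}$) or a difference of modelled distributions (bounded by $\triple{\color{blue}F};\overline{{\color{blue}F}}\triple_{\gamma,\eta;T}$ times a norm of $M$ or $\overline M$); both situations lead, after the same two-regime analysis, to the stated bound with a constant depending polynomially on the four quantities listed.

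The main obstacle I expect is the bookkeeping in step (ii), specifically the fourth telescoping term, where the interplay of three different norms, $\|\Pi\|_{\delta,\gamma;T}$, $\|\Sigma\|_{\gamma;T}$ and $[{\color{blue}F}]^{\mathrm{time}}_{\gamma,\eta;T}$, has to be balanced with the range of admissible $\bar\delta$ dictated by the gaps in $A\cup\{\gamma\}$; this is exactly the point where the standard (homogeneous) argument of \cite{Hairer_2014} needs the inhomogeneous modifications introduced in \cite{hairer2017discretisations}.
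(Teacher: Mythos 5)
Your overall strategy (freeze $t$ to get a spatial model, apply the classical reconstruction theorem to obtain $\mathcal{R}_t{\color{blue}F}_t$, then prove time H\"older continuity by a two‑regime analysis in $\lambda$ vs.\ $|t-s|^{1/s_0}$) is the right skeleton, and matches the reference \cite{hairer2017discretisations} that the paper cites for this theorem. There are, however, two concrete problems in step (ii).

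First, the telescoping identity you write is algebraically incomplete. Summing your four terms yields $\mathcal{R}_t{\color{blue}F}_t-\mathcal{R}_s{\color{blue}F}_s - \Pi_x^s\bigl(\Sigma^{ts}_x-1\bigr){\color{blue}F}_s(x)$, not $\mathcal{R}_t{\color{blue}F}_t-\mathcal{R}_s{\color{blue}F}_s$. The term $\Pi_x^s(\Sigma^{ts}_x-1){\color{blue}F}_s(x)$ must either be added back (it can then be handled with the $\Sigma$‑bound in \eqref{ModelEstimates}), or you should keep the fourth summand in the form $\Pi_x^t\Sigma^{ts}_x{\color{blue}F}_s(x)-\Pi_x^s{\color{blue}F}_s(x)$ rather than factoring it as $(\Pi_x^t-\Pi_x^s)\Sigma^{ts}_x{\color{blue}F}_s(x)$.

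Second, and more seriously, the interpolation you claim in the large scale regime $\lambda^{s_0}\ge|t-s|$ is false. You want to pass from $\lambda^\gamma|t,s|_0^{\eta-\gamma}$ to $\lambda^{\alpha-\bar\delta}|t-s|^{\bar\delta/s_0}|t,s|_0^{\eta-\gamma}$; but with $|t-s|^{1/s_0}\le\lambda$ one only has $|t-s|^{\bar\delta/s_0}\le\lambda^{\bar\delta}$, which is an \emph{upper} bound on the target and therefore goes the wrong way. At the extreme $t=s$, the desired bound is $0$ while $\lambda^\gamma>0$, so there is no constant $C$ making $\lambda^\gamma\le C\lambda^{\alpha-\bar\delta}|t-s|^{\bar\delta/s_0}$. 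The underlying issue is that you are estimating the two reconstruction errors $[\mathcal{R}_t{\color{blue}F}_t-\Pi_x^t{\color{blue}F}_t(x)]$ and $[\mathcal{R}_s{\color{blue}F}_s-\Pi_x^s{\color{blue}F}_s(x)]$ separately via \eqref{ReconstructionTheoremEstimate}, which ignores the cancellation between them as $s\to t$. The standard way to capture this cancellation is not a naive triangle inequality on each bracket; rather one either (a) constructs $\mathcal{R}_t{\color{blue}F}_t$ via a wavelet expansion and controls the coefficient differences $\langle\Pi^t_x{\color{blue}F}_t(x)-\Pi^s_x{\color{blue}F}_s(x),\psi^n_x\rangle$ directly, splitting the dyadic scales $2^{-n}$ around $|t-s|^{1/s_0}$, or (b) treats $(\Pi^t,\Gamma^t)$ and $(\Pi^s,\Gamma^s\Sigma^{st})$ as two spatial models and applies the Lipschitz dependence of $\mathcal{R}$ on the model (the analogue of your part (iii), proved first in the spatial setting), feeding in \eqref{ModelEstimatesTimeRegularity} and $[{\color{blue}F}]^{\mathrm{time}}_{\gamma,\eta;T}$ to quantify their distance by $|t-s|^{\bar\delta/s_0}$. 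Either fix resolves the large scale regime; as written your argument would not close. The small scale regime $\lambda^{s_0}<|t-s|$ and parts (i), (iii) are fine as sketched.

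(For context: the paper itself does not prove this theorem but imports it from \cite[Theorem~2.21 and Theorem 2.11]{hairer2017discretisations}, so there is no "paper proof" to compare against beyond that reference.)
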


\begin{remark}\label{ReconstructionTheoremRemark}
\begin{itemize}
\item[a)] We call $\mathcal{R}$ the reconstruction operator.
\item[b)] The map $\mathcal{R}_t$ can be viewed as a nonlinear map from the space of all inhomogeneous models and all modelled distributions into $C^{\alpha}$. For $\Gamma$ and $\Sigma$ fixed, the map $\mathcal{R}$ is actually bilinear in ${\color{blue}F}$ and $\Pi$.
\item[c)] If $\Pi_x^t{\color{blue}\tau}$ is a continuous function for every ${\color{blue}\tau}\in \mathcal{T}$ and every $(t,x)\in\R^{1+d}$, then $\mathcal{R}_\cdot{\color{blue}F}_{\cdot}$ is also a continuous function and $\mathcal{R}$ is given by the explicit formula
\begin{align*}
\left(\mathcal{R}_t{\color{blue}F_t}\right)(x)=\left(\Pi^t_x{\color{blue}F_t}(x)\right)(x).
\end{align*}
\item[d)] If $M$ is an inhomogeneous model with $\mathscr{D}^{\gamma,\eta}(T;M)$ the space of inhomogeneous distributions and $\mathcal{R}_{\cdot}$ the reconstruction operator and $\tilde{M}$, $\tilde{\mathscr{D}}^{\gamma,\eta}(T;\tilde{M})$ as in Remark \ref{Rem:ModelledDistributions}, then we have for the reconstruction operator $\tilde{\mathcal{R}}$ from \cite{Hairer_2014} that $\tilde{\mathcal{R}}=\mathcal{R}_{\cdot}$ on $\tilde{\mathscr{D}}^{\gamma,\eta}(T;\tilde{M})$ (note, that the domain of definition of $\tilde{\mathcal{R}}$ is in general smaller than the one of $\mathcal{R}_{\cdot}$).
\end{itemize}
\end{remark}

Next, we lift different analytic operations to the level of regularity structures. We cannot define all operations on the whole structure and to take that into account we first introduce sectors of a regularity structure.

\begin{definition}
Given a regularity structure $\mathscr{T}=(A,\mathcal{T},G)$ equipped with a model $M=(\Pi,\Gamma,\Sigma)$ and $\gamma >0$, $\eta\in\R$. We call a subspace $V\subset \mathcal{T}$ a \textit{sector} if
\begin{itemize}
\item[(i)] $V$ is invariant under the action of $G$, i.e. $\Gamma V\subset V$ for every $\Gamma\in G$,
\item[(ii)] $V$ can be written as $V=\bigoplus_{\alpha\in A}V_\alpha$ with $V_\alpha\subset \mathcal{T}_\alpha$.
\end{itemize}
Furthermore, we define the space of modelled distributions whose range is contained in a sector $V$ as
\begin{align*}
\mathscr{D}^{\gamma,\eta}(T;V;M)\coloneqq \{ {\color{blue}F}\in\mathscr{D}^{\gamma,\eta}(T;M):{\color{blue}F}(t,x)\in V\text{ for all }(t,x)\in(0,T]\times\R^d\}.
\end{align*}
We call the minimal $\alpha_0<\gamma$ such that $V_{\alpha_0}\neq 0$ the regularity of the sector. We will write  
\begin{align*}
\mathscr{D}^{\gamma,\eta}_{\alpha_0}(T;V;M)\coloneqq\left\{{\color{blue}f}\in\mathscr{D}^{\gamma,\eta}(T;V;M):{\color{blue}f}(x)\in \mathcal{T}_{\geq\alpha_0}=\bigoplus_{\beta\geq\alpha_0}\mathcal{T}_\beta\right\}
\end{align*}
to indicate the regularity of the sector in the notation of the space of modelled distributions.
\end{definition}

\begin{definition}
Given a regularity structure $\mathscr{T}=(A,\mathcal{T},G)$ and two sectors $V,\bar V\subset \mathcal{T}$, a \textit{product} on $(V,\bar V)$ is a bilinear continuous map $\star:V\times\bar V\rightarrow \mathcal{T}$ such that, for any ${\color{blue}\tau}\in V_\alpha$, ${\color{blue}\bar\tau}\in\bar V_\beta$ and $\Gamma\in G$, one has
\begin{align*}
{\color{blue}\tau}\star{\color{blue}\bar\tau}\in \mathcal{T}_{\alpha+\beta}\qquad\text{and}\qquad\Gamma({\color{blue}\tau}\star{\color{blue}\bar\tau})=\Gamma{\color{blue}\tau}\star\Gamma{\color{blue}\bar\tau},
\end{align*}
\end{definition}

With such a product at hand, we want to know if the pointwise product of two modelled distributions yields again a modelled distribution. The next theorem answers this question. 

\begin{theorem}\label{TheoremAbstractProduct}
Given a regularity structure $\mathscr{T}=(A,\mathcal{T},G)$ and sectors $V,\bar V\subset \mathcal{T}$. Let ${\color{blue}f_1}\in\mathscr{D}^{\gamma_1,\eta_1}_{\alpha_1}(T;V;M)$, ${\color{blue}f_2}\in\mathscr{D}^{\gamma_2,\eta_2}_{\alpha_2}(T;\bar V;M)$, and let $\star$ be a product on $(V,\bar V)$. Then, the function ${\color{blue}f}$ given by ${\color{blue}f}(x)={\color{blue}f_1}(x)\star{\color{blue}f_2}(x)$ belongs to $\mathscr{D}^{\gamma,\eta}_\alpha(T;M)$ with
\begin{align*}
\alpha=\alpha_1+\alpha_2,\quad \gamma=(\gamma_1+\alpha_2)\land (\gamma_2+\alpha_1), \quad \eta=(\eta_1+\alpha_2)\land (\eta_2+\alpha_1)\land (\eta_1+\eta_2).
\end{align*}
\end{theorem}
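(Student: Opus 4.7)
The plan is to follow the classical product theorem of Hairer (Theorem 4.7 of \cite{Hairer_2014}) adapted to the inhomogeneous setting: define the candidate modelled distribution pointwise by $f(t,x) := \pi_{<\gamma}\bigl(f_1(t,x) \star f_2(t,x)\bigr)$, where $\pi_{<\gamma}$ is the projection onto $\mathcal{T}_{<\gamma}$, and then verify the four defining estimates of $\mathscr{D}^{\gamma,\eta}$: the weighted sup bound, the space translation bound via $\Gamma$, and the new time translation bound via $\Sigma$. The decisive algebraic fact that will be used throughout is that for every $\Gamma \in G$ (and in particular for every $\Sigma^{ts}_x \in G$), the identity $\Gamma(\tau \star \bar\tau) = \Gamma\tau \star \Gamma\bar\tau$ holds by the defining property of a product on a pair of sectors, and that both $\Gamma^t_{xy}$ and $\Sigma^{ts}_x$ preserve the filtration $\mathcal{T}_{<\gamma}$ because $G$ acts triangularly.

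First I would handle the sup estimate: for a component of homogeneity $m = \alpha + \beta < \gamma$ with $\alpha \geq \alpha_1$, $\beta \geq \alpha_2$, one has $\|f(t,x)\|_m \leq \sum_{\alpha+\beta=m} \|f_1(t,x)\|_\alpha \|f_2(t,x)\|_\beta$, and using the weighted bounds from $\mathscr{D}^{\gamma_1,\eta_1}$ and $\mathscr{D}^{\gamma_2,\eta_2}$ reduces the question to checking $(m-\eta)\vee 0 \leq (\alpha-\eta_1)\vee 0 + (\beta-\eta_2)\vee 0$; a short case analysis on which of the three components of $\eta = (\eta_1+\alpha_2) \wedge (\eta_2+\alpha_1) \wedge (\eta_1+\eta_2)$ is minimal closes this. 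For the space estimate I would use the standard algebraic decomposition
\begin{align*}
f_1(t,x) \star f_2(t,x) - \Gamma^t_{xy}\bigl(f_1(t,y) \star f_2(t,y)\bigr)
&= \bigl(f_1(t,x) - \Gamma^t_{xy} f_1(t,y)\bigr) \star f_2(t,x) \\
&\quad + \Gamma^t_{xy} f_1(t,y) \star \bigl(f_2(t,x) - \Gamma^t_{xy} f_2(t,y)\bigr),
\end{align*}
project onto $\mathcal{T}_{<\gamma}$, and bound each factor using the Hölder estimates for $f_1, f_2$, the sup bounds, and the model bounds \eqref{ModelEstimates} on $\Gamma$. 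Summing contributions across levels and tracking the $|t|_0$-weights yields the required estimate at exponents $(\eta-\gamma, \gamma-m)$.

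For the genuinely new ingredient, the time translation bound, I would apply the same decomposition with $\Sigma^{ts}_x$ in place of $\Gamma^t_{xy}$, which is legitimate since $\Sigma^{ts}_x \in G$ commutes with $\star$ in the same way. The resulting estimate produces powers of $|t-s|^{(\gamma-m)/s_0}$ through the time bound on $\Sigma$ in \eqref{ModelEstimates}, combined with the time seminorm of each factor; the restriction $|t-s| \leq |t,s|_0^{s_0}$ in $[\cdot]^{\text{time}}_{\gamma,\eta;T}$ and the interplay between the two weights $|t|_0$ and $|t,s|_0$ are reconciled using the elementary inequality $|t|_0 \lesssim |t,s|_0$ and $|s|_0 \lesssim |t,s|_0$. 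The main obstacle I anticipate is the bookkeeping of weights: verifying that with $\eta = (\eta_1+\alpha_2)\wedge(\eta_2+\alpha_1)\wedge(\eta_1+\eta_2)$ the required exponent $\eta-\gamma$ is recovered uniformly over all choices of $\alpha \in A \cap [\alpha_1,\gamma_1)$ and $\beta \in A \cap [\alpha_2,\gamma_2)$ with $\alpha+\beta \geq \gamma$ (the terms killed by $\pi_{<\gamma}$), so that the projection does not generate uncontrolled remainders. Once those weight checks are carried out, the three bounds combine to give $f \in \mathscr{D}^{\gamma,\eta}_\alpha(T;M)$ with the claimed parameters.
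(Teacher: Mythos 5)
The paper states Theorem \ref{TheoremAbstractProduct} without proof; it is imported from the literature (cf.\ Theorem~4.7 of Hairer's \emph{A theory of regularity structures} and its adaptation to inhomogeneous models in Hairer--Matetski). Your outline reproduces exactly that standard argument: define $f$ as the projected pointwise $\star$-product, use the filtration-preserving and product-compatibility properties of the structure group, run the sup, space, and time estimates by the telescoping decomposition, and note that the time estimate is structurally identical to the space one because $\Sigma^{ts}_x$ is also an element of $G$. This is correct in substance, and you correctly identify the one non-trivial bookkeeping point, namely controlling the terms that $\pi_{<\gamma}$ discards from $\Gamma^t_{xy}(f_1(t,y)\star f_2(t,y))$ and $\Sigma^{ts}_x(f_1(s,x)\star f_2(s,x))$, which contribute at level $m$ with exactly the needed power $|x-y|^{\gamma-m}$ (resp.\ $|t-s|^{(\gamma-m)/s_0}$) by the model bounds.

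One small slip worth fixing: in the sup estimate the inequality you need is
\begin{align*}
(\alpha-\eta_1)\vee 0 + (\beta-\eta_2)\vee 0 \ \leq\ (m-\eta)\vee 0,
\end{align*}
not the reverse as written. Since $|t|_0\leq 1$, the factor $|t|_0^{-p}$ is \emph{increasing} in $p$, so the product-side exponent must be dominated by the target exponent. Carrying out your four cases (depending on the signs of $\alpha-\eta_1$ and $\beta-\eta_2$) with this orientation is precisely what forces $\eta\leq\eta_1+\eta_2$, $\eta\leq\eta_1+\alpha_2$, and $\eta\leq\eta_2+\alpha_1$, which is why $\eta$ is defined as their minimum. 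With that inequality corrected, the proof outline is sound and matches the standard route.
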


\subsection{Integration against Singular Kernels}

Next we want to include the integration against singular kernels into the setting of regularity structures. For smoothing kernels (in space and time) this has been done already and we only cite the needed results, but for non-smoothing kernels like the Riesz transform the result is new. Additionally, the Riesz transform acts only in the spacial variable which is the reason to consider inhomogeneous models.\\
To lift such nonlocal operators we need regularity structures which contain the polynomials in a certain sense.

\begin{definition}
We call a regularity structure $\mathscr{T}=(A,\mathcal{T},G)$ normal, if there exists a sector $\overline{\mathcal{T}}\subset \mathcal{T}$ 
isomorphic to the space of abstract polynomials in $1+d$ commuting variables, 
meaning $\overline{\mathcal{T}}_{\alpha}\neq 0$ if and only if $\alpha\in\N$, and one can find basis 
vectors ${\color{blue}X^k}$ of $\overline{\mathcal{T}}_{\abs k}$ such that every element $\Gamma\in G$ 
acts on $\overline{\mathcal{T}}$ by $\Gamma{\color{blue}X^k}=({\color{blue}X}+h{\color{blue}1})^k$ for 
some $h\in\R^d$.
\end{definition}

\begin{definition}
Let $\mathscr{T}=(A,\mathcal{T},G)$ be a normal regularity structure. Given a sector $V\subset \mathcal{T}$, a linear map $\mathcal{I}:V\rightarrow \mathcal{T}$ is an \textit{abstract integration map of order} $\beta \geq 0$ if it satisfies the following properties:
\begin{itemize}
\item[(i)] For every $\alpha\in A$ one has $\mathcal{I}:V_\alpha\rightarrow \mathcal{T}_{\alpha+\beta}$.
\item[(ii)] For every ${\color{blue}\tau}\in V\cap\overline{\mathcal{T}}$ one has $\mathcal{I}{\color{blue}\tau}=0$.
\item[(iii)] For every ${\color{blue}\tau}\in V$ and every $\Gamma\in G$ one has $\mathcal{I}\Gamma{\color{blue}\tau}-\Gamma\mathcal{I}{\color{blue}\tau}\in\overline{\mathcal{T}}$.
\end{itemize}
In the case of $\alpha+\beta\notin A$, (i) is understood as $\mathcal{I}{\color{blue}\tau}=0$ if ${\color{blue}\tau}\in V_\alpha$.
\end{definition}

\subsubsection{Integration against Regularizing Kernels}
Let us first turn to regularising kernels and make precise what we mean exactly when we said that $K$ ``improves regularity by order $\beta>0$''.

\begin{definition}\label{AssumptionK}
We call a kernel $K:\R^{1+d}\setminus\{0\}\rightarrow\R$ \textit{$\beta$-regularising} if it is smooth and if one can write $K=\sum_{n\geq 0}K_n$ where each of the kernels $K_n:\R\times\R^d\rightarrow\R$ is smooth and compactly supported in a ball of radius $2^{-n}$ around the origin.\\
Furthermore, we assume that for every multi-index $k\in\N_0^{1+d}$, there exists a constant $C$ such that
\begin{align*}
\underset{(t,x)\in \R\times\R^d}{\sup}\abs{D^kK_n(t,x)}\leq C 2^{(d+s_0-\beta+\abs k)n}
\end{align*}
holds uniformly in $n$. Finally, we assume that there exists $N\in\N$ sufficiently large such that $\int_\R \int_{\R^d} K_n(t,x)P(t,x)dx \; dt=0$ for every polynomial $P$ of degree at most $N$.
\end{definition}

\begin{remark}\label{RemarkDecompKernel}
Lemma 5.5 in \cite{Hairer_2014} shows, that it is possible under certain scaling behaviours to decompose a smooth function $\bar K:\R\times \R^d\setminus\{0\}\rightarrow\R$ as $\bar K(t,x)=K(t,x)+R(t,x)$ such that $R$ is $C^\infty$ and $K$ is regularising in the sense of Definition \ref{AssumptionK}.
\end{remark}

From now on, we assume that $K(t,x)=0$ for $t<0$, i.e, $K$ is non-anticipative. 

To define models that are  ``compatible'' with a regularizing kernel we need a function that helps us to force $\Pi_x^t\mathcal{I}{\color{blue}\tau}$ to vanish at the correct order. For this, given a kernel $K$ satisfying Assumption \ref{AssumptionK}, for every $t\in\R,x\in\R^d$ and ${\color{blue}\tau}\in \mathcal{T}_\alpha$ with $\alpha\in A$, we define a function $\mathcal{J}(t,x):\mathcal{T}\rightarrow\overline{\mathcal{T}}$ by
\begin{align}\label{DefinitionJ}
\mathcal{J}(t,x){\color{blue}\tau}\coloneqq \sum_{\abs k<\alpha+\beta}\sum_{n\geq 0}\frac{{\color{blue}X^k}}{k!}\int_0^t\langle\Pi_x^s \Sigma_x^{st}{\color{blue}\tau},D^kK_n(t-s,x-\cdot)\rangle \d s,
\end{align}
which reads formally as
\begin{align*}
\mathcal{J}(t,x){\color{blue}\tau}
 & =\sum_{\abs k<\alpha+\beta}\frac{{\color{blue}X^k}}{k!}\int_0^t\int_{\R^d}(\Pi_x^s\Sigma^{st}_{x}{\color{blue}\tau})(s,y)D^kK(t-s,x-y)dy \; ds\\
 & = \sum_{\abs k<\alpha+\beta}\frac{{\color{blue}X^k}}{k!}((\Pi_x^\cdot\Sigma^{\cdot t}{\color{blue}\tau})\ast D^k K)(t,x).
\end{align*}
$\mathcal{J}(t,x){\color{blue}\tau}$ is well-defined since the sum converges absolutely by \cite[Lemma 5.19]{Hairer_2014}.
We can now define suitable models.

\begin{definition}\label{DefinitionAdmissible}
Let $K$ be a $\beta$-regularising kernel and $\mathscr{T}$ a normal regularity structure. Further, let $\mathcal{I}:V\rightarrow \mathcal{T}$ be an abstract integration map of order $\beta> 0$ defined on some sector $V$. We say that an inhomogeneous model $M=(\Pi,\Gamma, \Sigma)$ \textit{realizes} $K$ for $\mathcal{I}$ if the identity
\begin{align}\label{RealizingModelIdentities}
\Pi_x^t\mathcal{I}{\color{blue}\tau}=K\ast\Pi_x^\cdot\Sigma^{\cdot t} {\color{blue}\tau}-\Pi_x^t\mathcal{J}(t,x){\color{blue}\tau}
\end{align}
holds for every ${\color{blue}\tau}\in V$ with $\abs{\color{blue}\tau}\leq N$ and if for all $s,t\in\R, x,y \in\R^d$
\begin{align*}
\Gamma^{t}_{xy}(\mathcal{I}+\mathcal{J}(t,y))=(\mathcal{I}+\mathcal{J}(t,x))\Gamma^{t}_{xy},\\
\Sigma^{ts}_{x}(\mathcal{I}+\mathcal{J}(s,x))=(\mathcal{I}+\mathcal{J}(t,x))\Sigma^{ts}_{x}.
\end{align*}
\end{definition}

The existence of an abstract integration map and a realizing model is guaranteed by the following theorem under rather mild assumptions.

\begin{theorem}[Extension Theorem]\label{ExtensionTheorem}
Let $\mathscr{T}=(A,\mathcal{T},G)$ be a normal regularity structure, let $\beta>0$ and $V\subset \mathcal{T}_{\leq\bar\gamma}$, $\bar\gamma>0$, be a sector with the property that for every $\alpha\notin\N$ with $V_\alpha\neq\{0\}$, one has $\alpha+\beta\notin\N$. Let furthermore $W\subset V$ be a subsector of $V$ and let $K$ be a $\beta$-smoothing kernel. Let $M=(\Pi,\Gamma, \Sigma)$ be an inhomogeneous model for $\mathscr{T}$, and let $\mathcal{I}:W\rightarrow \mathcal{T}$ be an abstract integration map of order $\beta$ such that $M$ realizes $K$ for $\mathcal{I}$.\\
Then, there exists a regularity structure $\hat{\mathscr{T}}$ containing $\mathscr{T}$, in the sense that $\mathcal{T}\subset\mathcal{\hat T}$ and the action of $\hat G$ on $\mathcal{T}$ is compatible with that of $G$, a model $\hat M=(\hat\Pi,\hat{\Gamma},\hat{\Sigma})$ for $\hat{\mathscr{T}}$ extending $M$, and an abstract integration map $\hat{\mathcal{I}}:V\rightarrow \mathcal{T}$ of order $\beta$ such that the model $\hat M$ realizes $K$ for $\hat{\mathcal{I}}$, and the map $\hat{\mathcal{I}}$ extends $\mathcal{I}$ from $W$ to $V$.
\end{theorem}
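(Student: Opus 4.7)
The plan is to follow the template of the Extension Theorem of \cite[Theorem~5.14]{Hairer_2014}, augmented with the $\Sigma$-component required by the inhomogeneous framework of \cite{hairer2017discretisations}. First, I would enlarge $\mathscr{T}$ to $\hat{\mathscr{T}}=(\hat A,\hat{\mathcal T},\hat G)$ by formally adjoining one symbol $\hat{\mathcal{I}}\tau$ of homogeneity $|\tau|+\beta$ for each basis element $\tau\in V\setminus W$, and setting $\hat{\mathcal{I}}=\mathcal{I}$ on $W$. The extended group action on the new symbols is forced by property~(iii) and the compatibility identities of Definition \ref{DefinitionAdmissible}, yielding
\begin{align*}
\hat\Gamma^t_{xy}\hat{\mathcal{I}}\tau&=\hat{\mathcal{I}}\Gamma^t_{xy}\tau+\mathcal{J}(t,x)\Gamma^t_{xy}\tau-\Gamma^t_{xy}\mathcal{J}(t,y)\tau,\\
\hat\Sigma^{ts}_x\hat{\mathcal{I}}\tau&=\hat{\mathcal{I}}\Sigma^{ts}_x\tau+\mathcal{J}(t,x)\Sigma^{ts}_x\tau-\Sigma^{ts}_x\mathcal{J}(s,x)\tau.
\end{align*}
The cocycle laws and the cross-identity between $\hat\Gamma$ and $\hat\Sigma$ follow from those satisfied by $(\Gamma,\Sigma)$ after telescoping the $\mathcal{J}$-terms, using that $\mathcal{J}$ itself is built from $\Pi$ and $\Sigma$.

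Next, I would set on the new generators
\[
\hat\Pi^t_x\hat{\mathcal{I}}\tau:=K\ast\Pi_x^{\cdot}\Sigma_x^{\cdot t}\tau-\Pi_x^t\mathcal{J}(t,x)\tau,
\]
which is precisely the right-hand side of \eqref{RealizingModelIdentities}. The identities $\hat\Pi^t_x\hat\Gamma^t_{xy}=\hat\Pi^t_y$ and $\hat\Pi^t_x\hat\Sigma^{ts}_x=\hat\Pi^s_x$ on the new symbols reduce, after substitution of the definitions above and cancellation of the polynomial corrections, to the already known analogous identities on $V$.

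The analytic content is \eqref{ModelEstimates} and \eqref{ModelEstimatesTimeRegularity} for $\hat\Pi$, $\hat\Gamma$ and $\hat\Sigma$. Using the decomposition $K=\sum_n K_n$ with $K_n$ supported in a ball of radius $2^{-n}$, I would test $\hat\Pi^t_x\hat{\mathcal{I}}\tau$ against $\varphi_x^\lambda$ and split the sum at scale $\lambda$. For $2^{-n}\leq\lambda$ the convolution $K_n\ast\varphi_x^\lambda$ is an admissible test-function at scale $\lambda$, and the $\Pi$-bound \eqref{ModelEstimates} produces the required $\lambda^{\alpha+\beta}$ factor after summing a geometric series in $2^{-n\beta}$. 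For $2^{-n}>\lambda$ I would Taylor-expand $D^k K_n(t-s,x-\cdot)$ around $x$; the moment-annihilation assumption on $K_n$ combined with the definition \eqref{DefinitionJ} of $\mathcal{J}$ ensures that all low-order terms cancel exactly against $\Pi_x^t\mathcal{J}(t,x)\tau$, leaving a remainder controlled by the $\Pi$-bound at scale $2^{-n}$. The bound on $\hat\Gamma^t_{xy}\hat{\mathcal{I}}\tau$ then follows by applying the same splitting to its defining formula and invoking the $\Pi$- and $\Gamma$-bounds already at our disposal.

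The genuinely new step, absent in \cite{Hairer_2014}, is the bound on $\hat\Sigma^{ts}_x\hat{\mathcal{I}}\tau$, where one must extract a factor $|t-s|^{(\alpha+\beta-|k|)/s_0}$ from each polynomial coefficient; I expect this to be the main obstacle. Writing the difference $\mathcal{J}(t,x)\Sigma^{ts}_x\tau-\Sigma^{ts}_x\mathcal{J}(s,x)\tau$ as a sum of two integrals of the form \eqref{DefinitionJ} and splitting the range of integration at distance $|t-s|$ from the endpoints, I would control the near-endpoint piece by the time-regularity bound \eqref{ModelEstimatesTimeRegularity} on $\Pi$ together with the $\Sigma$-bound of \eqref{ModelEstimates}, and the far piece by inserting the cocycle $\Sigma^{st}_x=\Sigma^{sr}_x\Sigma^{rt}_x$ to convert an increment in $s$ into an increment at an intermediate time. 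The hypothesis that $\alpha+\beta\notin\N$ for the relevant $\alpha$ absorbs the would-be logarithmic terms, and summation over $n$ yields the desired time-scaling, completing the extension.
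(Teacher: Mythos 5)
Your construction of the extended model coincides exactly, after relabelling the time indices, with the formulas the paper writes down: $\hat\Pi^t_x(a,b)=\Pi^t_xa+K\ast\Pi^\cdot_x\Sigma^{\cdot t}_xb-\Pi^t_x\mathcal J(t,x)b$, $\hat\Gamma^t_{xy}=(\Gamma^t_{xy},\mathcal J(t,x)\Gamma^t_{xy}-\Gamma^t_{xy}\mathcal J(t,y))$, $\hat\Sigma^{st}_x=(\Sigma^{st}_x,\mathcal J(s,x)\Sigma^{st}_x-\Sigma^{st}_x\mathcal J(t,x))$. You also correctly identify that the one genuinely new analytic estimate, beyond Theorem~5.14 of \cite{Hairer_2014}, is the bound on $\hat\Sigma$. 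So the overall structure agrees with the paper, whose proof is a deliberately terse reference to the homogeneous proof supplemented by precisely these definitions.

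There is, however, a flaw in the way you propose to close the $\hat\Sigma$ bound. You invoke the time-regularity estimate \eqref{ModelEstimatesTimeRegularity} on $\Pi$, but that estimate is \emph{not} among the hypotheses of the theorem: the Extension Theorem only assumes that $M=(\Pi,\Gamma,\Sigma)$ is an inhomogeneous model, i.e.\ that the three bounds in \eqref{ModelEstimates} hold, and it asserts the same for $\hat M$. If your argument genuinely needed \eqref{ModelEstimatesTimeRegularity}, you would only have proved a weaker statement. In fact, it does not. Observe that \eqref{DefinitionJ} is built from $\Pi^r_x\Sigma^{rt}_x\tau$ rather than from $\Pi^r_x\tau$. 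Expanding the $X^k$-coefficient of $\mathcal J(s,x)\Sigma^{st}_x\tau-\Sigma^{st}_x\mathcal J(t,x)\tau$ and using the cocycle $\Sigma^{rs}_x\Sigma^{st}_x=\Sigma^{rt}_x$ together with the binomial expansion of $\Sigma^{st}_xX^k=(X_0+(s-t))^{k_0}\bar X^{\bar k}$, the difference reorganizes into (i) a time-integral over an interval of length $|t-s|$, and (ii) a truncated Taylor remainder in the time variable of $D^kK_n$ between $s-r$ and $t-r$. Both pieces acquire the required factor $|t-s|^{(\alpha+\beta-|k|_{\mathfrak s})/s_0}$ from the $\Sigma$-bound in \eqref{ModelEstimates} and the scaling of $D^kK_n$; no time-regularity of $\Pi$ enters. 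Your ad hoc splitting ``at distance $|t-s|$ from the endpoints'' is also not the natural one — the split that works, as in the $\hat\Gamma$ argument you correctly sketch, is by kernel scale $2^{-n}\gtrless|t-s|^{1/s_0}$, with the cancellation against the truncated Taylor expansion doing the work at large scales. Once you replace the appeal to \eqref{ModelEstimatesTimeRegularity} with this argument, your proof is equivalent to the one the paper refers to.
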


\begin{proof}
The proof is virtually the same as for the statement regarding homogeneous models, see \cite[Theorem 5.14]{Hairer_2014}. The only differences are in the definition of the extended model $(\hat\Pi,\hat\Gamma,\hat\Sigma)$ and the additional algebraic properties that the inhomogeneous model requires.\\
Using the same notation as in the original proof, we define
\begin{align*}
\hat{\Pi}_x^t(a,b) \coloneqq \Pi_x^ta + K\ast\Pi_x^{\cdot}\Sigma_x^{\cdot t}b - \Pi_x^t\mathcal{J}(t,x)b,
\end{align*}
where $(a,b)\in T\oplus\bar W$. It is then straightforward to verify that $\hat\Pi$ satisfies (\ref{RealizingModelIdentities}) on the extended model space $\mathcal{T}\oplus\bar W$.\\
In complete analogy to the original proof, we define
\begin{align*}
\hat\Gamma_{xy}^t & \coloneqq (\Gamma_{xy}^t,\mathcal J(t,x)\Gamma_{xy}^t-\Gamma_{xy}^t\mathcal J(t,y)),\\
\hat\Sigma_x^{st} & \coloneqq (\Sigma_x^{st},\mathcal J(s,x)\Sigma_x^{st}-\Sigma_x^{st}\mathcal J(t,x))
\end{align*}
and obtain that $(\hat\Pi,\hat\Gamma,\hat\Sigma)$ is a model realising $K$ for $\hat{\mathcal I}$.
\end{proof}

\begin{remark}
The above theorem allows the case for $W$ to be empty, which implies, under the above assumptions, the existence of abstract integration maps as well as realizing models.
\end{remark}

Next, we aim at the actual lift of the convolution with $K$ to the level of regularity structures. We define

\begin{align}\label{DefinitionN}
(\mathcal{N}{\color{blue}F})(t,x)\coloneqq\sum_{\abs k<\gamma+\beta}\sum_{n\geq 0}\frac{{\color{blue}X^k}}{k!}\int_0^t\langle\mathcal{R}_s{\color{blue}F}_s-\Pi_x^s\Sigma^{st}_{x}{\color{blue}F}_t(x),D^kK_n(t-s,x-\cdot)\rangle ds.
\end{align}

Then we have the following result, see \cite[Theorem 2.21]{hairer2017discretisations} for the proof.

\begin{theorem}\label{TheoremAbstractIntegration}
Given a normal regularity structure $\mathscr{T}=(A,\mathcal{T},G)$ equipped with an inhomogeneous model $M=(\Pi,\Gamma,\Sigma)$ and a sector $V\subset \mathcal{T}$ of regularity $\alpha_0$. Let $\alpha:=0\land \alpha_0$, $\gamma,\beta>0$ with  $\gamma+\beta\notin\N$  and $K$ be a $\beta$-regularizing kernel with $N\geq \gamma+\beta \lor |\alpha|$. Further, let $\mathcal{I}:V\rightarrow \mathcal{T}$ be an abstract integration map of order $\beta$ and let $M$ realize $K$ for $\mathcal{I}$. For $\eta\in (\gamma-s_0,\gamma)$ and $T>0$ define the operator $\mathcal{K}$ for  ${\color{blue}F}\in\mathscr{D}^{\gamma,\eta}(T;V;M)$ by
\begin{align}\label{eq:DefAbstractK}
(\mathcal{K}{\color{blue}F})(t,x)=\mathcal{I}{\color{blue}F}(t,x)+\mathcal{J}(t,x){\color{blue}F}(t,x)+(\mathcal{N}{\color{blue}F})(t,x).
\end{align}
Then, $\mathcal{K}$ maps $\mathscr{D}^{\gamma,\eta}(T;V;M)$ onto $\mathscr{D}^{\gamma+\beta,(\eta\land \alpha)+\beta}(T;M)$ with
\begin{align*}
\triple \mathcal{K}{\color{blue}F}\triple _{\gamma+\beta,(\eta\land \alpha)+\beta;T} \leq C \triple {\color{blue}F}\triple _{\gamma,\eta;T} \|\Pi\|_{\gamma;T} \|\Sigma\|_{\gamma;T}(1+\|\Gamma\|_{\gamma+\beta;T}+\|\Sigma\|_{\gamma+\beta;T})
\end{align*}
and the identity
\begin{align*}
\mathcal{R}_t\mathcal{K}{\color{blue}F}_t=(K\ast\mathcal{R}_{\cdot}{\color{blue}F}_{\cdot})(t)
\end{align*}
holds for $t\in (0,T]$.\\
If $\overline M=(\overline\Pi,\overline\Gamma,\overline\Sigma)$ is another inhomogeneous model realizing $K$ for $\mathcal{I}$ under the same assumptions as above and $\overline{\mathcal{K}}$ be defined as in \eqref{eq:DefAbstractK}, then we have for all ${\color{blue}F}\in\mathscr{D}^{\gamma,\eta}(T;V;M)$ and $\overline{{\color{blue}F}}\in\mathscr{D}^{\gamma,\eta}(T;V;\overline{M})$
\begin{align*}
\triple \mathcal{K}{\color{blue}F};\overline{\mathcal{K}}\overline{{\color{blue}F}}\triple _{\gamma+\beta,(\eta\land \alpha)+\beta;T} \leq C (\triple {\color{blue}F};\overline{{\color{blue}F}}\triple _{\gamma,\eta;T} + \|M;\overline{M}\|_{\gamma+\beta;T}),
\end{align*}
where $C$ depends on the maximum of $\triple {\color{blue}F}\triple _{\gamma,\eta;T},\triple \overline{{\color{blue}F}}\triple _{\gamma,\eta;T},\|\Pi\|_{\gamma;T},\|\overline{\Pi}\|_{\gamma;T},\|\Gamma\|_{\gamma+\beta;T}$, $\|\Sigma\|_{\gamma+\beta;T},\|\overline{\Gamma}\|_{\gamma+\beta;T}$ and $\|\overline{\Sigma}\|_{\gamma+\beta;T}$.
\end{theorem}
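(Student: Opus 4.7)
The plan is to mimic the strategy from \cite[Thm.~5.12]{Hairer_2014} for the lift of smoothing kernels to homogeneous models, while carrying along the extra bookkeeping forced by the time-shift maps $\Sigma$. We split $\mathcal{K}{\color{blue}F} = \mathcal{I}{\color{blue}F}+ \mathcal{J}(t,x){\color{blue}F}+ \mathcal{N}{\color{blue}F}$ and work level by level on the dyadic decomposition $K=\sum_{n\geq 0} K_n$. At homogeneity $\alpha+\beta$ with $\alpha\in A$, $\alpha<\gamma$, only $\mathcal{I}$ contributes and the estimate is immediate from $\|\mathcal{I}\tau\|_{\alpha+\beta}\lesssim \|\tau\|_\alpha$ and the modelled-distribution bound on ${\color{blue}F}$. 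At integer homogeneity $|k|_\mathfrak{s}<\gamma+\beta$, both $\mathcal{J}$ and $\mathcal{N}$ contribute, and one needs to check that the dyadic sums converge.

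First I would establish the uniform bound $\|\mathcal{K}{\color{blue}F}\|_{\infty;\gamma+\beta,(\eta\land\alpha)+\beta;T}$. For $\mathcal{J}(t,x){\color{blue}F}(t,x)$ one expands in the $K_n$ decomposition, uses the $\Pi$-bound $|\langle \Pi^s_x\Sigma^{st}_x{\color{blue}F}(t,x),D^k K_n(t-s,x-\cdot)\rangle|\lesssim 2^{-\alpha n}\,2^{(|\mathfrak{s}|-\beta+|k|_\mathfrak{s})n}\,\|{\color{blue}F}(t,x)\|_\alpha\,\|\Sigma\|_{\gamma;T}$ (after rewriting $D^k K_n(t-s,x-\cdot)$ as a rescaled test function at scale $2^{-n}$), and integrates the resulting power of $|t-s|$. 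The geometric series in $n$ converges precisely because $|k|_\mathfrak{s}<\alpha+\beta$. For $\mathcal{N}{\color{blue}F}$, I would pair the Reconstruction Theorem estimate \eqref{ReconstructionTheoremEstimate}, applied to ${\color{blue}F}_s - \Sigma^{st}_x{\color{blue}F}(t,x)$ after a suitable rearrangement, with the kernel scaling; here the exponent $|k|_\mathfrak{s}<\gamma+\beta$ is exactly what makes the $n$-sum summable, and the weight in $t$ produces the factor $|t|_0^{(\eta\land\alpha)+\beta-(\gamma+\beta)}$.

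Next I would verify the spatial and time H\"older-type seminorms. For the spatial seminorm, expand $\mathcal{K}{\color{blue}F}(t,x)-\Gamma^t_{xy}\mathcal{K}{\color{blue}F}(t,y)$ and use the algebraic identity
\begin{equation*}
\Gamma^t_{xy}\bigl(\mathcal{I}+\mathcal{J}(t,y)\bigr) = \bigl(\mathcal{I}+\mathcal{J}(t,x)\bigr)\Gamma^t_{xy},
\end{equation*}
which reduces the $\mathcal{I}+\mathcal{J}$ contribution to a controlled expression in ${\color{blue}F}(t,x)-\Gamma^t_{xy}{\color{blue}F}(t,y)$. The remaining $\mathcal{N}$ contribution is handled by splitting the kernel decomposition at the scale $n_0$ with $2^{-n_0}\sim |x-y|$ and estimating each regime separately, using again the reconstruction bound and the modelled distribution norm. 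The time seminorm is treated analogously, replacing $\Gamma^t_{xy}$ by $\Sigma^{ts}_x$ and using the second algebraic identity in Definition~\ref{DefinitionAdmissible}; the restriction $|t-s|\le |t,s|_0^{s_0}$ is used to localise in time and to deal with the polynomial weights.

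For the reconstruction identity $\mathcal{R}_t\mathcal{K}{\color{blue}F}_t=(K*\mathcal{R}_\cdot{\color{blue}F}_\cdot)(t)$, I would invoke the uniqueness part of the Reconstruction Theorem: using \eqref{RealizingModelIdentities} and the definition of $\mathcal{N}$, the distribution $K*\mathcal{R}_\cdot{\color{blue}F}_\cdot$ satisfies the characterising bound \eqref{ReconstructionTheoremEstimate} for $\mathcal{K}{\color{blue}F}$, so it must agree with $\mathcal{R}_t\mathcal{K}{\color{blue}F}_t$. The Lipschitz estimate in the two-model version is obtained by repeating the above steps with differences $({\color{blue}F}-\overline{{\color{blue}F}})$, $(\Pi-\overline\Pi)$, $(\Gamma-\overline\Gamma)$, $(\Sigma-\overline\Sigma)$, and bounding cross-terms by the prescribed maximum of the ``size'' norms. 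The hardest part is not any single estimate but the combinatorics of the $n$-sum against the multiple weighted H\"older norms: one must make sure that every factor of $|t-s|$, $|x-y|$, $|t|_0$ and $2^{\pm n}$ conspires to give a finite bound, and in particular that the inhomogeneous nature of the model (the presence of $\Sigma^{st}_x$ both inside $\mathcal{J}$ and inside $\mathcal{N}$) does not destroy the convergence of the series that was already delicate in the homogeneous setting.
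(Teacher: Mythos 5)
The paper itself provides no proof of this theorem: it is quoted verbatim as Theorem 2.21 of Hairer--Matetski (``Discretisations of rough stochastic PDEs'') and the reader is referred there. Your sketch reproduces, with the right ingredients, the strategy of that reference, which is in turn the inhomogeneous-model adaptation of Hairer's Theorem 5.12. The decomposition $\mathcal{K}=\mathcal{I}+\mathcal{J}+\mathcal{N}$, the use of the reconstruction bound for $\mathcal{N}$ after inserting $\pm\Pi^s_xF_s(x)$, the split of the dyadic sum at $2^{-n_0}\sim|x-y|$ for the spatial seminorm, the two commutation identities of Definition~\ref{DefinitionAdmissible} to control $\mathcal{I}+\mathcal{J}$ in the spatial and time seminorms, and the uniqueness argument for the reconstruction identity are all the correct steps, so your approach is essentially the one in the cited proof.

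Two points you might sharpen. First, the hypothesis $\eta\in(\gamma-s_0,\gamma)$ is not decorative: the lower bound $\eta>\gamma-s_0$ is precisely what guarantees integrability at the $s\to 0$ endpoint of the time convolution $\int_0^t$ inside $\mathcal{J}$ and $\mathcal{N}$; it is worth saying explicitly where this enters rather than folding it into ``the weight in $t$ produces the right factor.'' Second, for the time seminorm one also has to account for the mismatch between the integration domains $(0,t)$ and $(0,s)$ when forming $\mathcal{N}F(t,x)-\Sigma^{ts}_x\mathcal{N}F(s,x)$, and the convolution in $\mathcal{N}$ being only spatial at each fixed time slice means the scaling arguments in $n$ have to be done after the time integral is performed (or the two estimated separately); your phrase ``localise in time'' alludes to this but a full proof would need to make that split explicit. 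Neither point is a flaw in the strategy, only in the level of detail.
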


\subsubsection{Integration against Non-regularizing Kernels}
Now, let us turn to a non-regularising kernel $R$ depending only on the spatial variables, 
which corresponds to the case $\beta=0$ 
in the above definitions. The convolution with such a kernel is in general not a well-defined 
operator and hence we cannot expect that only the behaviour of the singularity is enough to 
lift it without any additional assumptions.\\
Our framework is, that we can write $R= L G$ for some kernel $G$ on $\R^d\setminus\{0\}$ which 
is smoothing of order $\beta>0$ and $L$ a differential operator with constant coefficients of 
order less or equal than $\beta$ acting in the spatial coordinate directions.  This is slightly 
more general than assuming that $L$ is exactly of order $\beta$ and even so we do not need this 
generalization for the Riesz transform it can be helpful in other applications.\\
Since such a kernel does not improve the regularity in time we can only aim to prove that we get a 
mapping from $\mathcal{D}^{\gamma}$ into itself, which is the reason to call all these kernels 
non-regularizing. Let us make the definition precise.

\begin{definition}\label{AssumptionR}
We call a kernel $R:\R^d\rightarrow\R$ non-regularising if there exist
$\beta_L\in\N_0$, $0\neq\beta\geq \beta_L$, a kernel $G:\R^d\setminus\{0\} \to\R$ 
and a differential operator $L$ of order $\beta_L$ with constant 
coefficients such that $R=LG$.\\
Furthermore, we assume $G=\sum_{n\geq 0}G_n$ where each of the kernels $G_n:\R^d\rightarrow\R$ is 
smooth and compactly supported in a ball of radius 
$2^{-n}$ around the origin and that for every multi-index $k\in\N_0^{d}$, there exists a constant $C$ such that
\begin{align*}
\underset{x\in \R^d}{\sup}\abs{D^kG_n(x)}\leq C 2^{(d+\abs k-\beta)n}
\end{align*}
holds uniformly in $n$. Finally, we assume that there exists $N\in\N$ sufficiently large such that $\int_{\R^d} G_n(x)P(x)dx=0$ 
for every polynomial $P$ of degree at most $N$.
\end{definition}
We have for such a kernel $R=\sum_{n\geq 0} R_n:=\sum_{n\geq 0}LG_n$, 
$\underset{x\in \R^d}{\sup}\abs{D^kR_n(x)}\leq C 2^{(d+\abs k+\beta_L-\beta)n}$ and
$\int_{\R^d} R_n(x)P(x)dx=0$ 
for every polynomial $P$ of degree at most $N+\beta_L$ by definition.

\begin{example}\label{example}
\begin{itemize}
\item[a)] The Riesz transform $R_i$ given by $\partial_i \Delta^{-1/2}$ fits in our setting, since the Kernel corresponding to $\Delta^{-1/2}$ is $1$-regularizing on $\R^d$.
\item[b)] The Helmholtz (or Leray) projection is given by $P=1-\nabla \Delta^{-1} \div$, 
so $Pu=u-\nabla K \ast_{\R^d} \div u$ where $K$ is the kernel corresponding to $\Delta^{-1}$. 
Integration by parts yields $K \ast_{\R^d} \div u= - (\nabla K) \ast_{\R^d}  u$ and we get 
$Pu=u+\nabla (\sum_{i=1}^d \partial_i K \ast_{\R^d} u_i)=u+\sum_{i=1}^d \nabla \partial_i K \ast_{\R^d} u_i$.
 The kernels $\nabla \partial_i K$ are non-regularizing in the above sense and so the results of this section 
 can be applied to $P-1$, which also yields a way to lift $P$ itself.
\item[c)] The convolutions with the kernel $K(x)=|x|^{-d}$ is not a well-defined operator. Considering any 
positive smooth function which is constant and non-zero in a small ball around the origin, the convolution is infinite in this ball. 
\end{itemize}
\end{example}

For a non-regularizing kernel $R$, $t\in\R,x\in\R^d$ and ${\color{blue}\tau}\in \mathcal{T}_\alpha$ with $\alpha\in A$, $\alpha+\beta\notin \N_0$, we formally define the function $\mathcal{J}(t,x):\mathcal{T}\rightarrow\overline{\mathcal{T}}$ by
\begin{align}\label{DefinitionJR}
\mathcal{J}(t,x){\color{blue}\tau}\coloneqq \sum_{\abs k<\alpha}\sum_{n\geq 0}\frac{{\color{blue}X^k}}{k!} \langle\Pi_x^t{\color{blue}\tau},D^kR_n(x-\cdot)\rangle.
\end{align}
Note, that we will need the additional structure of $R$ to guarantee that the sum converges absolutely and that
$\mathcal{J}(t,x){\color{blue}\tau}$ is well-defined. We are interested in models that realize $R$ in the following sense.

\begin{definition}\label{DefinitionAdmissibleR}
Let $R$ be a non-regularising kernel and $\mathscr{T}$ a normal regularity structure. Further, let $\mathcal{I}:V\rightarrow \mathcal{T}$ be an abstract integration map of order $0$. We say that an inhomogeneous model $M=(\Pi,\Gamma, \Sigma)$ \textit{realizes} $R$ for $\mathcal{I}$ if the identity
\begin{align}\label{RealizingModelIdentitiesR}
\Pi_x^t\mathcal{I}{\color{blue}\tau}=K\ast_{\R^d}\Pi_x^t {\color{blue}\tau}-\Pi_x^t\mathcal{J}(t,x){\color{blue}\tau}
\end{align}
holds for every ${\color{blue}\tau}\in \mathcal{T}$ with $\abs{\color{blue}\tau}\leq N$ and if for all $t,s\in\R, x,y \in\R^d$
\begin{align*}
\Gamma^{t}_{xy}(\mathcal{I}+\mathcal{J}(t,y))=(\mathcal{I}+\mathcal{J}(t,x))\Gamma^{t}_{xy},\\
\Sigma^{ts}_{x}(\mathcal{I}+\mathcal{J}(s,x))=(\mathcal{I}+\mathcal{J}(t,x))\Sigma^{ts}_{x}.
\end{align*}
\end{definition}

To show that $\mathcal{J}$ is well-defined and that the right-hand side of \eqref{RealizingModelIdentitiesR} 
makes sense, we prove an estimate for $\langle\Pi_x^t{\color{blue}\tau},D^kR_n(x-\cdot)\rangle$ first.

\begin{lemma}\label{lemma:firstestimate}
Let $R$ be a non-regularising kernel and $\mathscr T$ a regularity structure equipped with an inhomogeneous model $(\Pi,\Gamma,\Sigma)$. Then 
for every $\alpha\in A$ with $\alpha+\beta\notin\N_0$, $T>0$, ${\color{blue}\tau}\in  \mathcal{T}_{\alpha}$, $k=(k_1,\dots,k_d) \in\N_0^d$ we have
\begin{align*}
\abs{\langle\Pi^t_x{\color{blue}\tau},D^kR_n(x-\cdot)\rangle} \leq C 2^{(\beta_L-\beta-\alpha +\abs k)n}\norm{\Pi}_{\alpha;T}
\end{align*}
uniformly over $t\in[0,T]$ and $x \in\R^d$
\end{lemma}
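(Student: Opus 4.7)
The plan is to recognize $D^k R_n(x-\cdot)$, after an appropriate rescaling, as (a constant multiple of) an admissible test function $\varphi_x^\lambda$ with $\lambda = 2^{-n}$, and then apply the defining model bound \eqref{ModelEstimates} for $\Pi$.

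First I would note that since $R = LG$ with $L$ a differential operator of order $\beta_L$ with constant coefficients and $G = \sum_n G_n$ satisfying the estimates of Definition \ref{AssumptionR}, we immediately get $R_n = LG_n$, so $R_n$ is smooth, supported in $B_{2^{-n}}(0)$, and for every multiindex $j \in \N_0^d$,
\begin{align*}
\sup_{x \in \R^d} \abs{D^{k+j} R_n(x)} \leq C\, 2^{(d + |k| + |j| + \beta_L - \beta)n}.
\end{align*}
Here the constant $C$ depends only on $j,k$ and the coefficients of $L$.

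Next I would perform the rescaling. Set $\lambda := 2^{-n} \in (0,1]$ and define
\begin{align*}
\psi(z) := \lambda^d\, D^k R_n(-\lambda z), \qquad z \in \R^d.
\end{align*}
Then $\psi$ is smooth, supported in $B_1(0)$, and $\psi_x^\lambda(y) = \lambda^{-d}\psi(\lambda^{-1}(y-x)) = D^k R_n(x-y)$. Differentiating and using the bound above,
\begin{align*}
\abs{D^j \psi(z)} = \lambda^{d + |j|}\, \abs{D^{k+j} R_n(-\lambda z)} \leq C\, 2^{-n(d+|j|)}\, 2^{(d + |k| + |j| + \beta_L - \beta)n} = C\, 2^{(|k| + \beta_L - \beta)n},
\end{align*}
uniformly in $|j| \leq r$, where $r$ is the integer appearing in the definition of the model (chosen so that $r > |\min A|$). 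Hence $\|\psi\|_{C^r} \leq M$ with $M := C\, 2^{(|k| + \beta_L - \beta)n}$, so $\tilde\psi := \psi / M \in \mathcal{B}_r$.

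Finally I would conclude by invoking the first estimate in \eqref{ModelEstimates}: since $\tau \in \mathcal{T}_\alpha$ and $\tilde\psi \in \mathcal{B}_r$, for $t \in [0,T]$
\begin{align*}
\abs{\langle \Pi_x^t \tau,\, D^k R_n(x-\cdot)\rangle} = M\, \abs{\langle \Pi_x^t \tau,\, \tilde\psi_x^\lambda\rangle} \leq M \cdot \|\Pi\|_{\alpha;T}\, \lambda^\alpha\, \|\tau\|_\alpha,
\end{align*}
and substituting $M$ and $\lambda = 2^{-n}$ yields
\begin{align*}
\abs{\langle \Pi_x^t \tau,\, D^k R_n(x-\cdot)\rangle} \leq C\, 2^{(|k| + \beta_L - \beta - \alpha)n}\, \|\Pi\|_{\alpha;T}\, \|\tau\|_\alpha,
\end{align*}
which is the claim (with $\|\tau\|_\alpha$ absorbed into $C$ for fixed $\tau$). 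This argument is essentially a rescaling of the test function against which $\Pi_x^t\tau$ is tested, so there is no serious obstacle; the only point requiring a little care is to ensure the test function has been rescaled so that it lands in $\mathcal{B}_r$ after dividing out the factor $M$, and to keep track of the three scaling exponents ($-d$ from the normalization of $\varphi_x^\lambda$, $+|k|$ from the derivatives, and the growth $2^{(d + \beta_L - \beta)n}$ coming from $R_n$ itself).
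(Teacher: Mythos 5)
Your proof is correct, and it is the natural rescaling argument: you recognize $D^kR_n(x-\cdot)$ (after dividing out a suitable constant) as a test function $\varphi_x^\lambda$ at scale $\lambda=2^{-n}$, and then apply the first bound in \eqref{ModelEstimates}. The key points are all accounted for: the support fits into $B_1(0)$ because $\supp R_n\subset B_{2^{-n}}(0)$; the three scaling factors ($\lambda^{-d}$ from the normalization of $\varphi_x^\lambda$, $\lambda^{|j|}$ from the $j$-fold derivative of the rescaled function, and the growth $2^{(d+|k|+|j|+\beta_L-\beta)n}$ of $D^{k+j}R_n$) combine to give a bound on $\|\psi\|_{C^r}$ that is uniform in $|j|\le r$; and after normalizing, the model bound gives the stated exponent $\beta_L-\beta-\alpha+|k|$.

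The one remark worth making is how you exploit the structural assumption $R=LG$. The paper records the consequence $\sup|D^k R_n|\le C\,2^{(d+|k|+\beta_L-\beta)n}$ right after Definition~\ref{AssumptionR} and you use exactly this bound, so the structure enters only implicitly. In the closely related small-scale estimate (the lemma bounding $(\Pi_x^t\tau)(Y_n^\lambda)$), the paper instead integrates by parts to move $L$ onto the test function, obtaining $Y_n^\lambda = \pm\lambda^{-\beta_L}\int G_n(y-z)(L\psi)_x^\lambda(y)\,dy$ and then treating $G_n$ as a $\beta$-regularizing kernel tested against $(L\psi)_x^\lambda$. For Lemma~\ref{lemma:firstestimate} both routes are equivalent because the test-function scale is tied to $n$ (you take $\lambda=2^{-n}$). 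The integration-by-parts version becomes essential in the later estimates, where the test-function scale $\lambda$ is decoupled from $n$ and the $2^{-\beta n}$ decay extracted from $G_n$ is what makes the $\sum_n$ summable uniformly in $\lambda$. Your direct version is the cleaner one for the present single-scale statement. As a very minor point, the constant in your final bound carries a factor $\|\tau\|_\alpha$, which the lemma's statement silently absorbs into $C$ (equivalently, one reads the estimate as holding on the unit ball of $\mathcal{T}_\alpha$).
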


For $|k|\leq \alpha$ this implies that $\mathcal J$ is well-defined. Also the proof of the extension theorem makes 
use of this bound. In detail, this is required to show the analytic 
bound on $\hat\Gamma^t_{xy}$ as can be seen in the proof of Lemma 5.21 in \cite{Hairer_2014}.

We begin the proof of Lemma \ref{lemma:firstestimate} with an estimate which is useful in regimes 
of large scales, meaning for those $n$ such that $\norm{x-y}_{\mathfrak s}\leq 2^{-n}$. For this case we can follow the proof of Lemma 5.18 in \cite{Hairer_2014}.
To shorten the notation let us define
\begin{align*}
R^{k,\alpha}_{n,xy}(z) \coloneqq D^kR_n(y-z)-\sum_{\abs{k+l}_{\mathfrak s}<\alpha}\frac{(y-x)^l}{l!}D^{k+l}R_n(x-z).
\end{align*}

\begin{lemma}
Let $n\in\N$. Under the conditions of Lemma \ref{lemma:firstestimate} we have for $\tau\in  \mathcal{T}_{\alpha}$
\begin{align*}
\abs{(\Pi^t_y{\color{blue}\tau})(R^{k,\alpha}_{n,xy})}\leq C \norm{\Pi}_{\alpha;T}(1+\norm{\Gamma}_{\alpha;T})\sum_{\delta>0}2^{\delta n}\norm{x-y}^{\delta+\alpha+\beta-\beta_L-\abs k}
\end{align*}
uniformly over $t\in[0,T]$, $x,y\in \R^d$ and $\norm{x-y}_{\mathfrak s}\leq 2^{-n}$ and where the sum runs over finitely many $\delta$.
\end{lemma}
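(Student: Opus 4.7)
The plan is to adapt Hairer's Lemma 5.18 to the inhomogeneous-model setting and to the factorisation $R=LG$. First I use the model identity $\Pi^t_y=\Pi^t_x\Gamma^t_{xy}$ to decompose
\[
(\Pi^t_y{\color{blue}\tau})(R^{k,\alpha}_{n,xy})=\sum_{\beta'\in A,\ \beta'\leq\alpha}(\Pi^t_x\sigma_{\beta'})(R^{k,\alpha}_{n,xy}),\qquad \sigma_{\beta'}:=(\Gamma^t_{xy}{\color{blue}\tau})_{\beta'},
\]
with $\|\sigma_\alpha\|_\alpha=\|{\color{blue}\tau}\|_\alpha$ and $\|\sigma_{\beta'}\|_{\beta'}\leq C\|x-y\|^{\alpha-\beta'}\|\Gamma\|_{\alpha;T}\|{\color{blue}\tau}\|_\alpha$ for $\beta'<\alpha$, directly from \eqref{ModelEstimates}. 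This rebases every model evaluation at the single point $x$, which is essential in order to exploit the polynomial subtraction around $x$ that is built into $R^{k,\alpha}_{n,xy}$.

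Second, I rewrite $R^{k,\alpha}_{n,xy}(z)$ via the integral form of Taylor's remainder,
\[
R^{k,\alpha}_{n,xy}(z)=\sum_{|l|_{\mathfrak s}=M+1}\tfrac{(M+1)(y-x)^l}{l!}\int_0^1(1-s)^M D^{k+l}R_n\bigl(x+s(y-x)-z\bigr)\,ds,
\]
with $M$ the integer just below $\alpha-|k|$. Each summand, as a function of $z$, is supported in a ball of radius $\sim 2^{-n}$ around $x$ because $\|x-y\|_{\mathfrak s}\leq 2^{-n}$. Writing $D^{k+l}R_n=\sum_\mu c_\mu D^{k+l+\mu}G_n$ via $R=LG$ and invoking the $\beta$-smoothing bounds on $G_n$ from Definition \ref{AssumptionR}, I express each such function as a constant multiple of order $2^{(|k+l|+\beta_L-\beta)n}$ times a scaled test function at $x$ at scale $\lambda\asymp 2^{-n}$. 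Applying the analytic model bound at base point $x$ with homogeneity $\beta'$ then yields
\[
|\langle \Pi^t_x\sigma_{\beta'},D^{k+l}R_n(x+s(y-x)-\cdot)\rangle|\lesssim 2^{(|k+l|+\beta_L-\beta-\beta')n}\|\Pi\|_{\beta';T}\|\sigma_{\beta'}\|_{\beta'}.
\]

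Third, combining the Taylor prefactor $|(y-x)^l|\leq\|x-y\|^{|l|_{\mathfrak s}}$ with the $\Gamma$-bound on $\|\sigma_{\beta'}\|_{\beta'}$, each contribution takes the schematic form $\|x-y\|^a 2^{bn}$. I then use the regime constraint $\|x-y\|_{\mathfrak s}\leq 2^{-n}$, which implies $(\|x-y\|\,2^n)^c\leq 1$ for every $c\geq 0$, to trade the $\Gamma$-factor $\|x-y\|^{\alpha-\beta'}$ against $2^{-(\alpha-\beta')n}$. After this rearrangement every contribution matches the target form $2^{\delta n}\|x-y\|^{\delta+\alpha+\beta-\beta_L-|k|}$ for a strictly positive $\delta$ determined by the integer parts of $\alpha-|k|$ and $\alpha-\beta'$; summing over the finitely many $\beta'\in A$ with $\beta'\leq\alpha$ appearing in $\Gamma^t_{xy}{\color{blue}\tau}$ and the finitely many $l$ with $|l|_{\mathfrak s}=M+1$ in the Taylor remainder produces the claimed estimate.

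The principal obstacle is the combinatorial bookkeeping of exponents. For $\beta'$ close to $\alpha$ (in particular $\beta'=\alpha$) the $\Gamma$-improvement $\|x-y\|^{\alpha-\beta'}$ is small or absent, so the required $\|x-y\|^{\beta-\beta_L}$ surplus in the target exponent must come predominantly from the Taylor cancellation; for $\beta'$ far from $\alpha$ the opposite is true. It is precisely the decomposition $R=LG$ with the $\beta$-smoothing of $G$ (rather than the mere $0$-smoothing of $R$ itself) that eventually produces the $+\beta-\beta_L$ improvement in the target exponent and thereby allows the sum to be reorganised into the form claimed.
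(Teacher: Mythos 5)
Your proof follows essentially the same route the paper refers to, namely Hairer's Lemma 5.18 adapted to inhomogeneous models and to the factorisation $R=LG$, and the main steps are sound: rebasing via $\Pi^t_y=\Pi^t_x\Gamma^t_{xy}$, the homogeneity decomposition, the integral Taylor remainder, writing the scaled copies of $D^{k+l}R_n$ as multiples $2^{(|k+l|+\beta_L-\beta)n}\varphi^{2^{-n}}_x$ of a test function (the factorisation $R=LG$ enters only through the derivative bound $\sup|D^{m}R_n|\lesssim 2^{(d+|m|+\beta_L-\beta)n}$, as the paper already records), and then the analytic model bound at level $\beta'$. Before any trading, the $\beta'$-contribution is $\|x-y\|^{(M+1)+\alpha-\beta'}2^{(|k|+M+1+\beta_L-\beta-\beta')n}$, which already \emph{is} of the form $2^{\delta n}\|x-y\|^{\delta+\alpha+\beta-\beta_L-|k|}$ with $\delta=|k|+M+1+\beta_L-\beta-\beta'$. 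Your trading step $\|x-y\|^{\alpha-\beta'}\leq 2^{-(\alpha-\beta')n}$ is a valid (weakening) inequality, but it is not needed; it merely replaces each of these exponents by the single worst one (the $\beta'=\alpha$ value).

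The one point that genuinely deserves verification, and which your proof glosses over, is the positivity of $\delta$ for the worst term $\beta'=\alpha$: there $\delta=(M+1)-(\alpha-|k|)-(\beta-\beta_L)$, and $(M+1)-(\alpha-|k|)$ is the "overshoot" of the first omitted Taylor order over $\alpha-|k|$, which lies in $[0,1)$. So one needs this overshoot to be strictly positive (i.e.\ $\alpha-|k|\notin\N_0$) and to exceed $\beta-\beta_L$. When $\beta=\beta_L$ (Riesz transform, Leray projection) and $\beta\in\N$, the standing hypothesis $\alpha+\beta\notin\N_0$ forces $\alpha-|k|\notin\N_0$ and so $\delta>0$; when $\beta>\beta_L$ this is precisely the extra spectral-gap condition recorded in the remark immediately after the lemma. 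You should state this explicitly rather than leaving "$\delta$ determined by the integer parts" implicit, since this is the only place where the proof of the large-scale regime can fail.
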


\begin{remark}
Note, that $\beta-\beta_L+(\gamma_{A_\gamma}\wedge\mathfrak s_{A_\gamma})> 0$ \\
where
\begin{align*}
\gamma_{A_\gamma} & =\min\left\{\alpha-\min\{\alpha'\in A: \alpha'<\alpha\}:\alpha\in A,\alpha<\gamma\right\}>0,\\
\mathfrak s_{A_y} & = \min\left\{\abs k_{\mathfrak s}-\alpha: k\in\N_0^d,\alpha\in A,\alpha<\gamma\right\}>0
\end{align*}
is necessary here.
\end{remark}

Next, we need an estimate in regimes of small scales, meaning for those $n$ such that $\norm{x-y}_{\mathfrak s}\geq 2^{-n}$.

\begin{lemma}
Let $n\in\N$ and the conditions of Lemma \ref{lemma:firstestimate} hold. Define
\begin{align*}
Y_n^\lambda(z) & \coloneqq\int_{\R^d}R_n(y-z)\psi_x^\lambda(y)dy,\\
Z_{n,l}^\lambda(z) & \coloneqq D^lR_n(x-z)\int_{\R^d}(y-x)^l\psi_x^\lambda(y)dy.
\end{align*}
Then, we have for $\tau\in  \mathcal{T}_{\alpha}$
\begin{align*}
\abs{(\Pi_x^t{\color{blue}\tau})(Y_n^\lambda)} & \leq C \norm{\Pi}_{\alpha;T}2^{-\beta n}\lambda^{\alpha-\beta_L},\\
\abs{(\Pi_x^t{\color{blue}\tau})(Z_{n,l}^\lambda)} & \leq C \norm{\Pi}_{\alpha;T}2^{(-\alpha-\beta+\abs l_{\mathfrak s}+\beta_L)n}\lambda^{\abs l_{\mathfrak s}},
\end{align*}
uniformly over $t\in[0,T]$, $x,y\in \R^d$ and $\norm{x-y}_{\mathfrak s}\geq 2^{-n}$.
\end{lemma}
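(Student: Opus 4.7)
The plan is to treat the two estimates separately, in each case recognising the function $z\mapsto Y_n^\lambda(z)$ or $z\mapsto Z_{n,l}^\lambda(z)$ as a scalar multiple of a scaled test function centred at $x$, at an appropriate scale, so that the model bound \eqref{ModelEstimates} can be applied directly. For $Y_n^\lambda$ the key idea is to exploit the factorisation $R_n = LG_n$ from Definition \ref{AssumptionR}, where $G_n$ is smoothing and $L$ is a spatial differential operator of order $\beta_L$. Writing $L=\sum_{|m|_{\mathfrak s}\le\beta_L}c_m D^m$ and integrating by parts in $y$, the operator $L$ passes from $G_n$ onto $\psi_x^\lambda$ at the cost of a factor $\lambda^{-\beta_L}$, since $D_y^m\psi_x^\lambda=\lambda^{-|m|_{\mathfrak s}}(D^m\psi)_x^\lambda$. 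What remains for each $m$ is a cross-convolution $z\mapsto\int G_n(y-z)(D^m\psi)_x^\lambda(y)\,\dy$, and because $G_n$ is supported in a ball of radius $2^{-n}$ with $\int|G_n|\,\dy\le C 2^{-\beta n}$, in the small-scale regime $\lambda\ge 2^{-n}$ this cross-convolution is, up to a constant independent of $n$ and $\lambda$, equal to $2^{-\beta n}$ times a scaled test function at scale $\lambda$ centred at $x$ (with sufficiently many derivatives controlled, which requires $\psi$ to be smooth enough). Applying \eqref{ModelEstimates} yields the factor $\lambda^\alpha$, and combining with the $\lambda^{-\beta_L}$ from the integration by parts gives the claimed bound.

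For $Z_{n,l}^\lambda$, the substitution $u=\lambda^{-1}(y-x)$ immediately produces $\int(y-x)^l\psi_x^\lambda(y)\,\dy=\lambda^{|l|_{\mathfrak s}}\int u^l\psi(u)\,\d u$, so the $y$-integral contributes only a constant multiple of $\lambda^{|l|_{\mathfrak s}}$ and leaves the $z$-dependence in $D^l R_n(x-z)$. The pointwise bound $\sup_z|D^{k+l}R_n|\le C 2^{(d+|k+l|_{\mathfrak s}+\beta_L-\beta)n}$ from Definition \ref{AssumptionR} together with the support condition show that $z\mapsto D^l R_n(x-z)$ is, up to a constant, $2^{(|l|_{\mathfrak s}+\beta_L-\beta)n}$ times a scaled test function at scale $2^{-n}$ centred at $x$. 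The model bound then contributes a factor $(2^{-n})^\alpha = 2^{-\alpha n}$, which assembles to $C\|\Pi\|_{\alpha;T}\lambda^{|l|_{\mathfrak s}}2^{(-\alpha-\beta+|l|_{\mathfrak s}+\beta_L)n}$, as required.

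The main obstacle lies in the first estimate: a direct sup-norm bound on $R_n$ alone produces only $C\|\Pi\|_{\alpha;T}\lambda^\alpha 2^{(\beta_L-\beta)n}$, which is weaker than the target by the factor $(2^n\lambda)^{\beta_L}\ge 1$ in the regime $\lambda\ge 2^{-n}$. Exploiting the factorisation $R_n=LG_n$ via integration by parts is therefore essential: it trades the coarser pointwise estimates on $R_n$ for the improved $L^1$-behaviour of $G_n$ at the price of $\beta_L$ spatial derivatives on the test function $\psi$, and produces precisely the correct balance of $2^{-\beta n}$ and $\lambda^{\alpha-\beta_L}$.
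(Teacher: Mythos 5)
Your proposal is correct and follows essentially the same approach as the paper: exploit $R_n = L G_n$, integrate by parts to transfer $L$ to the test function at a cost of $\lambda^{-\beta_L}$, recognise the resulting $G_n$-convolution as $2^{-\beta n}$ times a scaled test function at scale comparable to $\lambda$ (obtaining the $\lambda$-scaling of the derivatives by letting further derivatives fall on $\psi$ rather than on $G_n$), and then apply the model bound \eqref{ModelEstimates}; for $Z_{n,l}^\lambda$ use the substitution to extract $\lambda^{|l|_{\mathfrak s}}$ and treat $D^l R_n(x-\cdot)$ directly as $2^{(|l|_{\mathfrak s}+\beta_L-\beta)n}$ times a test function at scale $2^{-n}$. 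The paper states the same integration-by-parts step for $Y_n^\lambda$ and then defers the remaining estimate to Lemma 5.19 of Hairer's original work; your write-up supplies precisely those missing details, and your closing remark correctly identifies why a direct sup-norm bound on $R_n$ is insufficient in the regime $\lambda \geq 2^{-n}$.
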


\begin{proof}
To show this estimate we employ $R=LG$ to obtain
\begin{align*}
|Y_n^\lambda(z)|\leq \lambda^{-\beta_L} \left| \int_{\R^d}G_n(y-z)(L\psi)_x^\lambda(y)dy \right|.
\end{align*}
Here, we shifted a part of the singular behaviour of the kernel to the test-function, so that the convolution with $G_n$ gives the factor $ 2^{-\beta n}$. This step is crucial, since without any decay in $n$ on the right-hand side, summing up the $Y_n^\lambda(z)$ doesn't yield a helpful bound.\\
The estimate
\begin{align*}
	 \left| \int_{\R^d}G_n(y-z)(L\psi)_x^\lambda(y)dy \right|\leq  C \norm{\Pi}_{\alpha;T}2^{-\beta n}\lambda^{\alpha}
\end{align*}
can be deduced now in the same way as the proof of Lemma 5.19 in \cite{Hairer_2014} and for $Z_{n,l}^\lambda(z)$ we proceed similarly.
\end{proof}

Combining all these bounds as it has been done in Lemma 5.19 in \cite{Hairer_2014}, 
we finally obtain the estimate that shows the well-definedness of $\mathcal{J}$ and the right hand side of (\ref{RealizingModelIdentitiesR}).
\begin{lemma}
\begin{align*}
\abs{\sum_{n\geq 0}\int_{\R^d} (\Pi_x^t{\color{blue}\tau})(R^{0,\alpha}_{n,xy})\psi_x^\lambda(y)dy}
\leq C \norm{\Pi}_{\alpha;T}(1+\norm{\Gamma}_{\alpha;T})\lambda^{\alpha+\beta-\beta_L}.
\end{align*}
\end{lemma}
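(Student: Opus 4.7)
The plan is to decompose the sum in $n$ according to which of the two preceding auxiliary lemmas provides the useful estimate, and then to evaluate geometric series. Concretely, I would fix $n_0\in\N$ with $2^{-n_0}\sim\lambda$ and split $\sum_{n\geq 0}=\sum_{2^{-n}\geq \lambda}+\sum_{2^{-n}<\lambda}$. In the first (large-scale) regime, every $y$ in the support of $\psi_x^\lambda$ satisfies $\|x-y\|_{\mathfrak s}\leq\lambda\leq 2^{-n}$, so the first auxiliary lemma applies with $k=0$; in the second regime the support of $R_n$ is too small for that lemma, and one instead uses the decomposition into $Y_n^\lambda$ and $Z_{n,l}^\lambda$ from the second auxiliary lemma.

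For the large-scale sum, I would integrate the first auxiliary bound against $\psi_x^\lambda$, which has $L^1$-norm of order $1$ and support in the ball $|x-y|\leq \lambda$. Using $\|x-y\|_{\mathfrak s}\leq\lambda$ this gives, up to the prefactor $\|\Pi\|_{\alpha;T}(1+\|\Gamma\|_{\alpha;T})$, a bound of the form $\sum_{\delta>0}2^{\delta n}\lambda^{\delta+\alpha+\beta-\beta_L}$. Since only finitely many positive $\delta$ occur, summing over $n$ with $2^{-n}\geq\lambda$ produces a geometric series whose largest term, at $n\simeq \log_2(1/\lambda)$, contributes $\lambda^{\alpha+\beta-\beta_L}$ up to a constant, which is exactly the target.

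For the small-scale sum, I would start from the identity $\int_{\R^d} R_{n,xy}^{0,\alpha}(z)\psi_x^\lambda(y)\,dy = Y_n^\lambda(z)-\sum_{|l|_{\mathfrak s}<\alpha}\frac{1}{l!}Z_{n,l}^\lambda(z)$ obtained directly from the Taylor expansion defining $R_{n,xy}^{0,\alpha}$, and apply the second auxiliary lemma to each piece. The $Y_n^\lambda$ terms sum to $\sum_{n>n_0}2^{-\beta n}\lambda^{\alpha-\beta_L}\simeq \lambda^{\beta}\cdot\lambda^{\alpha-\beta_L}=\lambda^{\alpha+\beta-\beta_L}$, while each $Z_{n,l}^\lambda$ term contributes $\lambda^{|l|_{\mathfrak s}}\sum_{n>n_0}2^{(-\alpha-\beta+|l|_{\mathfrak s}+\beta_L)n}$; since $|l|_{\mathfrak s}<\alpha$ and $\beta\geq\beta_L$, the exponent on $2^n$ is strictly negative, so this series is dominated by its first term at $n\simeq n_0$, giving again $\lambda^{\alpha+\beta-\beta_L}$.

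The main delicate point is the control of the $Z_{n,l}^\lambda$ contributions: strict negativity of the exponent on $2^n$ is exactly why the Taylor expansion defining $R_{n,xy}^{0,\alpha}$ has been cut at $|l|_{\mathfrak s}<\alpha$, and it is here that the structural assumption $R=LG$ with $\beta\geq\beta_L$, together with $\alpha+\beta\notin\N_0$ to exclude borderline cases, really enters. The same structure provides the crucial factor $2^{-\beta n}$ in the $Y_n^\lambda$ estimate via the shift of $L$ onto the test-function. Adding the two regimes and absorbing the finitely many constants yields the claimed bound with the prefactor $\|\Pi\|_{\alpha;T}(1+\|\Gamma\|_{\alpha;T})$.
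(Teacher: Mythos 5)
Your two-regime decomposition at $n_0\sim\log_2(1/\lambda)$, the reduction of the small-scale sum to the $Y_n^\lambda$ and $Z_{n,l}^\lambda$ bounds, and the evaluation of the geometric series are exactly the structure of Hairer's Lemma~5.19, which is what the paper invokes, and your exponent bookkeeping (in particular the observation that $|l|_{\mathfrak s}<\alpha$ together with $\beta\geq\beta_L$ makes the exponent on $2^n$ in the $Z_{n,l}^\lambda$ sum strictly negative) is correct.

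There is, however, one step in the large-scale regime that is passed over too quickly and does not follow directly from the first auxiliary lemma as stated. That lemma bounds $\abs{(\Pi^t_y\tau)(R^{k,\alpha}_{n,xy})}$, i.e.\ the distribution is based at $y$, whereas the integrand in the statement to be proved involves $\Pi_x^t\tau$, based at $x$. To pass from one to the other you must write $\Pi_x^t\tau=\Pi_y^t\Gamma_{yx}^t\tau$ and decompose $\Gamma_{yx}^t\tau=\sum_{\zeta\leq\alpha}\mathcal{Q}_\zeta\Gamma_{yx}^t\tau$. For $\zeta=\alpha$ you get $\tau$ and the first auxiliary lemma applies directly. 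For $\zeta<\alpha$ the operand $\mathcal{Q}_\zeta\Gamma_{yx}^t\tau$ lies in $\mathcal{T}_\zeta$, so the lemma would bound its pairing with $R^{0,\zeta}_{n,xy}$, not with $R^{0,\alpha}_{n,xy}$; the difference $R^{0,\alpha}_{n,xy}-R^{0,\zeta}_{n,xy}=-\sum_{\zeta\leq|l|_{\mathfrak s}<\alpha}\frac{(y-x)^l}{l!}D^lR_n(x-\cdot)$ consists of finitely many additional Taylor terms, each of which must be controlled separately via Lemma~\ref{lemma:firstestimate} together with the bound $\norm{\mathcal{Q}_\zeta\Gamma_{yx}^t\tau}\leq\norm{\Gamma}_{\alpha;T}\abs{x-y}^{\alpha-\zeta}$. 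This is precisely where the factor $(1+\norm{\Gamma}_{\alpha;T})$ in the target inequality is used, and it is the same argument as in Hairer's Lemma~5.19; so the approach is sound, but "integrate the first auxiliary bound against $\psi_x^\lambda$" as written glosses over a change of base point and a change of Taylor order that should be made explicit.
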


The Extension Theorem \ref{ExtensionTheorem} carries directly over to non-regularizing kernels, whenever $\mathcal{J}$ is well defined, so we 
can go directly to the actual lift of the convolution with $R$ to the level of regularity structures. Define for $\gamma>0$, $T>0$ and ${\color{blue}F}\in\mathscr{D}^{\gamma,\eta}(T;V;M)$
\begin{align}\label{DefinitionNR}
(\mathcal{N}{\color{blue}F})(t,x)\coloneqq\sum_{\abs k<\gamma}\sum_{n\geq 0}\frac{{\color{blue}X^k}}{k!}\langle\mathcal{R}_t{\color{blue}F}_t-\Pi_x^t{\color{blue}F}_t(x),D^k R_n(x-\cdot)\rangle,
\end{align}
where $t\in (0,T], x\in\R^d$.

\begin{theorem}\label{TheoremAbstractIntegrationR}
Given a normal regularity structure $\mathscr{T}=(A,\mathcal{T},G)$ equipped with an inhomogeneous model $M=(\Pi,\Gamma,\Sigma)$ and a sector $V\subset \mathcal{T}$ of regularity $\alpha_0$. Let $\alpha:=0\land \alpha_0$, $\gamma>0$ and $R$ be a non-regularizing kernel with $\gamma+\beta\notin\N $ and $N\geq \gamma+\beta-\beta_L\lor |\alpha|$. Further, let $\mathcal{I}:V\rightarrow \mathcal{T}$ be an abstract integration map of order $0$ and let $M$ realize $R$ for $\mathcal{I}$. For $\eta\in (\gamma-s_0,\gamma)$ and $T>0$ define the operator $\mathscr{R}$ for  ${\color{blue}F}\in\mathscr{D}^{\gamma,\eta}(T;V;M)$ by
\begin{align}\label{eq:DefAbstractR}
(\mathscr{R}{\color{blue}F})(t,x)=\mathcal{I}{\color{blue}F}(t,x)+\mathcal{J}(t,x){\color{blue}F}(t,x)+(\mathcal{N}{\color{blue}F})(t,x).
\end{align}
Then, $\mathscr{R}$ maps $\mathscr{D}^{\gamma,\eta}(T;V;M)$ onto $\mathscr{D}^{\gamma,\eta}(T;M)$ with
\begin{align*}
\triple \mathscr{R}{\color{blue}F}\triple _{\gamma ,\eta;T} \leq C \triple {\color{blue}F}\triple _{\gamma,\eta;T} \|\Pi\|_{\gamma;T} \|\Sigma\|_{\gamma;T}(1+\|\Gamma\|_{\gamma;T}+\|\Sigma\|_{\gamma;T})
\end{align*}
and the identity
\begin{align*}
\mathcal{R}_t\mathscr{R}{\color{blue}F}_t=(R\ast\mathcal{R}_{\cdot}{\color{blue}F}_{\cdot})(t)
\end{align*}
holds for $t\in (0,T]$.\\
If $\overline M=(\overline\Pi,\overline\Gamma,\overline\Sigma)$ is another inhomogeneous model realizing $R$ for $\mathcal{I}$ under the same assumptions as above and $\overline{\mathscr{R}}$ be defined as in \eqref{eq:DefAbstractR}, then we have for all ${\color{blue}F}\in\mathscr{D}^{\gamma,\eta}(T;V;M)$ and $\overline{{\color{blue}F}}\in\mathscr{D}^{\gamma,\eta}(T;V;\overline{M})$
\begin{align*}
\triple \mathscr{R}{\color{blue}F};\overline{\mathcal{K}}\overline{{\color{blue}F}}\triple _{\gamma ,\eta;T} \leq C (\triple {\color{blue}F};\overline{{\color{blue}F}}\triple _{\gamma,\eta;T} + \|M;\overline{M}\|_{\gamma;T}),
\end{align*}
where $C$ depends on $\triple {\color{blue}F}\triple _{\gamma,\eta;T},\triple \overline{{\color{blue}F}}\triple _{\gamma,\eta;T},\|M\|_{\gamma;T}$ and $\|\overline{M}\|_{\gamma;T}$.
\end{theorem}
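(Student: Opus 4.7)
The plan is to adapt the proof of Theorem \ref{TheoremAbstractIntegration} (as developed in \cite{hairer2017discretisations} for the $\beta$-regularising case) to the present non-regularising setting, replacing every bound that relied on $\beta$-smoothing by the new estimates for $R=LG$ that have just been established. The proof naturally decomposes into four tasks: verify each of the three seminorms defining $\triple\cdot\triple_{\gamma,\eta;T}$ on $\mathscr{R}{\color{blue}F}$, and then check the reconstruction identity $\mathcal{R}_t \mathscr{R}{\color{blue}F}_t = (R\ast \mathcal{R}_\cdot {\color{blue}F}_\cdot)(t)$. I would carry out these steps in that order, treating the model-difference bound at the end.

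For the pointwise bound on $\|(\mathscr{R}{\color{blue}F})(t,x)\|_m$, the term $\mathcal{I}{\color{blue}F}(t,x)$ is controlled directly since $\mathcal{I}$ has order $0$; the polynomial contributions $\mathcal{J}(t,x){\color{blue}F}(t,x)$ and $(\mathcal{N}{\color{blue}F})(t,x)$ are handled by summing the estimate of Lemma \ref{lemma:firstestimate}, whose exponent $2^{(\beta_L-\beta-\alpha+|k|)n}$ yields a convergent series thanks to $|k|<\alpha$ (for $\mathcal{J}$) and the excess decay produced by the reconstruction theorem (for $\mathcal{N}$). For the spatial translation increment I would use the algebraic compatibility $\Gamma_{xy}^t(\mathcal{I}+\mathcal{J}(t,y))=(\mathcal{I}+\mathcal{J}(t,x))\Gamma_{xy}^t$ from Definition \ref{DefinitionAdmissibleR} to cancel the $\mathcal{I}+\mathcal{J}$ part against the analogous expansion around $y$, reducing the problem to controlling $\mathcal{N}{\color{blue}F}(t,x)-\Gamma^t_{xy}\mathcal{N}{\color{blue}F}(t,y)$. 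This quantity splits into a large-scale regime ($\|x-y\|_{\mathfrak s}\leq 2^{-n}$) handled via the Taylor-remainder lemma involving $R^{k,\alpha}_{n,xy}$, and a small-scale regime ($\|x-y\|_{\mathfrak s}\geq 2^{-n}$) handled via the $Y_n^\lambda,Z_{n,l}^\lambda$ lemma; the essential step in the latter is the integration by parts that shifts $L$ from $G_n$ onto the test function, producing the decisive summable factor $2^{-\beta n}$.

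The hardest part will be the time-increment $\mathscr{R}{\color{blue}F}(t,x)-\Sigma^{ts}_x\mathscr{R}{\color{blue}F}(s,x)$. This is precisely where the inhomogeneous framework is indispensable, because $R$ acts only in space and therefore does not smooth in time. I would use the second compatibility relation $\Sigma^{ts}_x(\mathcal{I}+\mathcal{J}(s,x))=(\mathcal{I}+\mathcal{J}(t,x))\Sigma^{ts}_x$ to recast the increment as $\mathcal{N}{\color{blue}F}(t,x)-\Sigma^{ts}_x\mathcal{N}{\color{blue}F}(s,x)$ plus algebraic corrections proportional to ${\color{blue}F}(t,x)-\Sigma^{ts}_x{\color{blue}F}(s,x)$. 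Because $R$ is time-independent, the first difference becomes, at each scale $n$, a spatial pairing of $R_n$ against the time-difference of the $\gamma$-approximation $\mathcal{R}_\cdot{\color{blue}F}_\cdot - \Pi_x^\cdot{\color{blue}F}_\cdot(x)$; bounding this by a combination of the reconstruction estimate \eqref{ReconstructionTheoremEstimate} at the two times and the time-Hölder contribution $[\cdot]^{\text{time}}_{\gamma,\eta;T}$ of ${\color{blue}F}$ should produce the required factor $|t-s|^{(\gamma-m)/s_0}$.

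The reconstruction identity then follows from uniqueness in Theorem \ref{ReconstructionTheorem}(i): using \eqref{RealizingModelIdentitiesR} and the definition \eqref{DefinitionNR} of $\mathcal{N}$, one checks that the candidate $(R\ast_{\R^d}\mathcal{R}_t{\color{blue}F}_t)(x)$ agrees with $\Pi_x^t(\mathscr{R}{\color{blue}F})(t,x)$ up to an error that, when tested against $\varphi_x^\lambda$, is $O(\lambda^\gamma|t|_0^{\eta-\gamma})$, precisely the bound characterising $\mathcal{R}_t\mathscr{R}{\color{blue}F}_t$. Finally, the Lipschitz bound between two realising models is obtained by repeating the estimates above simultaneously for $M$ and $\overline M$: since $\mathscr{R}$ is bilinear in $({\color{blue}F},\Pi)$ for fixed $(\Gamma,\Sigma)$, one writes each difference as a telescoping sum of a term with $\Pi-\overline\Pi$ and a term with ${\color{blue}F}-\overline{{\color{blue}F}}$, and applies the triangle inequality; this is routine bookkeeping given the bounds already established.
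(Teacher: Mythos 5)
Your proposal is correct and follows the same route as the paper, which is to transplant the argument for $\beta$-regularising kernels (Theorem \ref{TheoremAbstractIntegration}, after \cite{Hairer_2014} and \cite{hairer2017discretisations}) to the non-regularising case by systematically replacing the old kernel bounds with the new estimates for $R=LG$ from Lemma \ref{lemma:firstestimate} and the small-scale lemma, the decisive point being the integration by parts that moves $L$ onto the test function to recover the summable factor $2^{-\beta n}$. The paper's written proof only works out the weighted supremum bound $\norm{(\mathscr{R}{\color{blue}F})(t,x)}_l\leq C|t|_0^{(\eta-l)\wedge 0}$, doing so by splitting the dyadic sum at the scale $|t|_0\sim 2^{-n}$ for both the $\mathcal{N}$ and $\mathcal{J}$ contributions, and explicitly defers the spatial and temporal H\"older seminorms, the reconstruction identity, and the model-comparison bound to ``similar adaptations'' of the cited references; your sketch fills in exactly those deferred pieces (the algebraic cancellation of $\mathcal{I}+\mathcal{J}$ via the two compatibility identities, the large-/small-scale split for the spatial increment, the reduction of the time increment to $\mathcal{N}{\color{blue}F}(t,x)-\Sigma^{ts}_x\mathcal{N}{\color{blue}F}(s,x)$ plus a term controlled by $[{\color{blue}F}]^{\text{time}}$, and uniqueness in Theorem \ref{ReconstructionTheorem}(i) for the reconstruction identity), all consistent with the paper's intent.
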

\begin{proof}
The proof follows the lines of \cite{Hairer_2014}, see also \cite{hairer2017discretisations}. Here we only show the estimate
\begin{align*}
\norm{(\mathscr{R}{\color{blue}F})(t,x)}_{l}\leq C |t|_{0}^{(\eta-l) \land 0},
\end{align*}
where $l$ is an integer. The other parts of the proof can be 
carried over by similar adaptations.
Using the uniform in time estimates for our inhomogeneous model yields for $k=(k_1,\dots,k_n) \in \N_0^d$ with $|k|=l$
\begin{align*}
|\left<(\mathscr{R}_t{\color{blue}F}_t-\Pi_x^t{\color{blue}F}_t(x)),D^k R_n(x-\cdot)\right>|\leq C 2^{(l-\gamma)n}|t|_{0}^{\eta-\gamma}.
\end{align*}
Considering now $|t|_0\geq 2^{-(n+1)}$ we obtain by summing over those $n$
\begin{align*}
\sum_{n\in\N ,|t|_0\geq 2^{-(n+1)}} 2^{(|l|-\gamma)n}|t|_{0}^{\eta-\gamma}\leq C|t|_{0}^{\eta-l}
\end{align*}
and for $|t|_0< 2^{-(n+1)}$ we get
\begin{align*}
\sum_{n\in\N ,|t|_0< 2^{-(n+1)}} 2^{(l-\gamma)n}|t|_{0}^{\eta-\gamma}\leq C \sum_{n\in\N ,|t|_0< 2^{-(n+1)}} 2^{(l-\eta)n} \leq C |t|_{0}^{\eta-l}.
\end{align*}
Writing $\mathcal{J}(t,x){\color{blue}\tau}=\sum_{n\geq 0} \sum_{\abs k<\alpha}\frac{{\color{blue}X^k}}{k!} \langle\Pi_x^t{\color{blue}\tau},D^kR_n(x-\cdot)\rangle=:\sum_{n\geq 0}J^n(t,x){\color{blue}F}_t(x)$ we see that
\begin{align*}
\norm{J^n(t,x){\color{blue}F}_t(x)}_l \leq C\sum_{\zeta\in A, l<\zeta<\gamma} 2^{(l-\zeta)n}|t|_{0}^{(\eta-\zeta)\land 0}
\end{align*}
and from this we conclude as above
\begin{align*}
\norm{J(t,x){\color{blue}F}_t(x)}_l \leq C|t|_{0}^{\eta-l},
\end{align*}
which gives the desired estimate.
\end{proof}

\subsection{Functions with prescribed Singularities}
We will need some information about the behaviour of products and convolutions of smooth functions having a singularity of prescribed strength at the origin. Let us begin with a definition that captures this behaviour.

\begin{definition}\label{Def:Orderofkernel}
	Let $K:\R^{1+d}\setminus \{0\}\rightarrow\R$ be a smooth function and $\zeta\in\R$. Then $K$ is a \textit{kernel of order} $\zeta$ if there exists a constant $C$ such that for every $k\in\N_0^{1+d}$ the bound
	\begin{align*}
		\abs{D^k K(z)}\leq
		\begin{cases}
			C \norm{z}_{\mathfrak{s}}^{\zeta-|k|_{\mathfrak{s}}},\quad & \zeta-|k|_{\mathfrak{s}}\neq 0,\\
			C \abs{\log{\norm{z}_{\mathfrak{s}}}},\quad & \zeta-|k|_{\mathfrak{s}} =0
		\end{cases}
	\end{align*}
	holds for every $z=(t,x)\in\R^{d+1}$ with $\norm{z}_{\mathfrak{s}}\leq 1$.
\end{definition}

\begin{remark}
	\begin{itemize}
		\item[(i)] By definition, if $K$ is of order $\zeta$, then it is also of order $\bar\zeta$ for every $\bar\zeta <\zeta$.
		\item[(ii)] If $K$ is of order  $\zeta$, then $D^k$  is by definition of order $\zeta-|k|_{\mathfrak{s}}$.
	\end{itemize}
\end{remark}

For a proof of the next Lemma we refer to Lemma 10.14 in \cite{Hairer_2014}, see also Lemma 7.3 in \cite{hairer2017discretisations}.

\begin{lemma}\label{LemConvolutionKernels}
Let $K_1$ and $K_2$ be compactly supported kernels of orders $\zeta_1 \leq 0$ and $\zeta_2 \leq 0$, respectively. Then $K_1K_2$ is of order $\zeta_1+\zeta_2$. If $\zeta_1 \land \zeta_2 >-\abs{\mathfrak{s}}$ and $\bar\zeta:=\zeta_1+\zeta_2+\abs{\mathfrak{s}}\leq 0$, then $K_1\ast K_2$ is of order $\bar\zeta$.
\end{lemma}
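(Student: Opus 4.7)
My plan is to treat the two claims separately, since the product bound is elementary while the convolution bound requires an integration-domain decomposition. Throughout I would exploit that both kernels are compactly supported, so all integrals are on bounded sets and one only needs to control the singularity at the origin.

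For the product $K_1 K_2$, I would apply the Leibniz rule $D^k(K_1K_2) = \sum_{j\leq k} \binom{k}{j} D^{j}K_1\,D^{k-j}K_2$ and bound each summand by $C\|z\|_{\mathfrak s}^{\zeta_1-|j|_{\mathfrak s}}\|z\|_{\mathfrak s}^{\zeta_2-|k-j|_{\mathfrak s}} = C\|z\|_{\mathfrak s}^{\zeta_1+\zeta_2-|k|_{\mathfrak s}}$ using the kernel-of-order hypothesis for each factor. The only nuisance is the logarithmic exceptional case: if $\zeta_i-|j|_{\mathfrak s}=0$ for some summand one gets an extra $|\log \|z\|_{\mathfrak s}|$ factor, which is harmless because $\zeta_1+\zeta_2-|k|_{\mathfrak s}$ can be slightly lowered (using the monotonicity in $\zeta$ from the remark) to absorb the logarithm, or the bound already contains the appropriate log term when $\zeta_1+\zeta_2-|k|_{\mathfrak s}=0$ is itself the exceptional case.

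For the convolution $K_1 \ast K_2$ I would move the derivative onto one factor, say $D^k(K_1\ast K_2)(z) = \int D^k K_1(z-y)\,K_2(y)\,\mathrm{d}y$, and split the integration domain into three pieces based on the scaled distances to $0$ and to $z$: (a) $\|y\|_{\mathfrak s}\leq \tfrac12\|z\|_{\mathfrak s}$, (b) $\|y-z\|_{\mathfrak s}\leq \tfrac12\|z\|_{\mathfrak s}$, and (c) the remaining set where both $\|y\|_{\mathfrak s}$ and $\|y-z\|_{\mathfrak s}$ are comparable to or larger than $\|z\|_{\mathfrak s}$. On (a), $\|z-y\|_{\mathfrak s}\asymp \|z\|_{\mathfrak s}$, so the factor $D^k K_1(z-y)$ is bounded by $C\|z\|_{\mathfrak s}^{\zeta_1-|k|_{\mathfrak s}}$ and pulled out; the remaining integral $\int_{\|y\|_{\mathfrak s}\leq \|z\|_{\mathfrak s}/2}|K_2(y)|\,\mathrm{d}y$ is controlled by $C\|z\|_{\mathfrak s}^{\zeta_2+|\mathfrak s|}$ thanks to $\zeta_2>-|\mathfrak s|$, yielding the desired bound $\|z\|_{\mathfrak s}^{\bar\zeta-|k|_{\mathfrak s}}$. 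Region (b) is symmetric, with the roles of $K_1$ and $K_2$ exchanged (distribute the derivative onto $K_2$ here, or use $D^k(K_1\ast K_2)=K_1\ast D^k K_2$). Region (c) is the main obstacle: there both factors can be singular simultaneously. Here I would estimate $|D^k K_1(z-y)K_2(y)|\leq C\|z-y\|_{\mathfrak s}^{\zeta_1-|k|_{\mathfrak s}}\|y\|_{\mathfrak s}^{\zeta_2}$ and integrate over a ball of radius $\sim\|z\|_{\mathfrak s}$ (up to the size of the supports), using the scaled change of variables $y=\|z\|_{\mathfrak s}\tilde y$ (in the scaled sense) to produce the factor $\|z\|_{\mathfrak s}^{\zeta_1+\zeta_2+|\mathfrak s|-|k|_{\mathfrak s}}$ times a finite integral; finiteness of the latter needs $\zeta_1-|k|_{\mathfrak s}>-|\mathfrak s|$ and $\zeta_2>-|\mathfrak s|$, which hold for $|k|_{\mathfrak s}$ small and can otherwise be salvaged by placing some derivatives on $K_2$ instead.

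The log exceptional cases (when $\bar\zeta-|k|_{\mathfrak s}=0$ for the product, or when integrands produce logarithmic divergences after scaling in the convolution) would be handled by tracking the exact form of the constants or by slightly decreasing $\zeta_i$ using the monotonicity in the order. Finally, the assumption $\bar\zeta\leq 0$ (together with $\zeta_i\leq 0$) ensures that we only need the estimate for $\|z\|_{\mathfrak s}\leq 1$, and compact support guarantees everything is integrable globally, so no additional tails appear. I expect region (c) in the convolution — where both singularities interact — to be the only genuinely delicate step, everything else being a bookkeeping exercise.
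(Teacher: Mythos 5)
The paper gives no proof: it cites Lemma~10.14 of Hairer's \emph{A theory of regularity structures} (and Lemma~7.3 of Hairer--Matetski). Your outline matches that reference's strategy (Leibniz for the product, a three-region decomposition of the convolution integral), so there is nothing genuinely different to compare at the level of strategy. There is, however, a real gap in your treatment of region~(b), and a secondary confusion in region~(c).

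The gap. You want to estimate $\int_B D^kK_1(z-y)K_2(y)\,\mathrm dy$ where $B=\{\|z-y\|_{\mathfrak s}\leq\|z\|_{\mathfrak s}/2\}$, and you notice that for $|k|_{\mathfrak s}\geq\zeta_1+|\mathfrak s|$ the factor $\|z-y\|_{\mathfrak s}^{\zeta_1-|k|_{\mathfrak s}}$ is no longer locally integrable near $y=z$; so you propose to ``use $D^k(K_1\ast K_2)=K_1\ast D^kK_2$'' on region~(b). This does not make sense as written: the two representations of $D^k(K_1\ast K_2)$ agree as functions of $z$ but are \emph{not} equal when restricted to a subdomain of the $y$-integration. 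In other words, you cannot write
\[
D^k(K_1\ast K_2)(z)=\int_A D^kK_1(z-y)K_2(y)\,\mathrm dy+\int_B K_1(z-y)D^kK_2(y)\,\mathrm dy+\cdots,
\]
because the first and second integrands differ by a divergence and the boundary term between $A$ and $B$ is nonzero. To make the idea rigorous you need one of the two standard devices: (i) insert a smooth cutoff $\chi_z$ adapted to $\|z\|_{\mathfrak s}$ and integrate by parts in $y$, which transfers derivatives from $K_1$ to $\chi_z K_2$ and produces extra commutator terms $D^{k-j}\chi_z\cdot D^jK_2$ that must themselves be estimated (each derivative of $\chi_z$ costs $\|z\|_{\mathfrak s}^{-1}$, which is exactly compensated by the gain from shrinking the $y$-domain); or (ii) decompose $K_1,K_2$ dyadically into smooth pieces supported in annuli $\|y\|_{\mathfrak s}\sim 2^{-n}$, estimate each $K_1^{(m)}\ast K_2^{(n)}$ (where integration by parts is harmless), and sum. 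Without one of these, your region~(b) bound simply fails for large $|k|_{\mathfrak s}$.

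The secondary issue. In region~(c) you invoke the local integrability conditions $\zeta_1-|k|_{\mathfrak s}>-|\mathfrak s|$ and $\zeta_2>-|\mathfrak s|$ and suggest salvaging them ``by placing some derivatives on $K_2$.'' But in region~(c) both $\|y\|_{\mathfrak s}$ and $\|z-y\|_{\mathfrak s}$ are bounded below by $\|z\|_{\mathfrak s}/2$, so neither singularity is in the domain and local integrability is not the constraint. The correct constraint, after your rescaling $y=\|z\|_{\mathfrak s}\tilde y$, is integrability of $\|\tilde y\|^{\zeta_1+\zeta_2-|k|_{\mathfrak s}}$ at large $\|\tilde y\|$ up to radius $\sim\|z\|_{\mathfrak s}^{-1}$, which needs $\zeta_1+\zeta_2-|k|_{\mathfrak s}+|\mathfrak s|=\bar\zeta-|k|_{\mathfrak s}\leq 0$ (with a logarithm in the equality case); this is automatic from $\bar\zeta\leq 0$. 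So your conclusion in region~(c) is right, but for the wrong reason, and the ``salvage'' you propose there is not needed. Fixing the region~(b) argument is the essential missing step.
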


As we are working with regularized versions $\xi_\varepsilon=\xi\ast\rho_\varepsilon$ of white noise, we will also encounter regularized versions of integral kernels of the form
\begin{align*}
	K_\varepsilon\coloneqq K\ast\rho_\varepsilon.
\end{align*}
The following lemma describes their behaviour and shows that they converge to the original kernel as the regularization gets removed. Again, for a proof we refer to \cite[Lemma 10.17]{Hairer_2014}.

\begin{lemma}\label{LemRegularKernels}
	Let $K$ be a kernel of order $\zeta\in(-\abs{\mathfrak{s}},0)$. Then, there exists a constant $C=C(K)$ such that one has the bound
	\begin{align}\label{EstimateRegularKernel}
		\abs{K_\varepsilon(z)}\leq C (\norm{z}_{\mathfrak{s}}+\varepsilon)^\zeta,
	\end{align}
	for all $z\in\R^{1+d}$. In particular, $K_\varepsilon$ is a kernel of order $\zeta$.\\
	Moreover, for all $\nu\in(0,1]$, one has the bound
	\begin{align}\label{EstimateRegularKernelDifference}
		\abs{K(z)-K_\varepsilon(z)}\leq C \varepsilon^\nu \norm{z}^{\zeta-\nu}_\mathfrak{s},
	\end{align}
	for all $z\in\R^{1+d}$.
\end{lemma}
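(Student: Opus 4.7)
The plan is to prove both bounds via a dichotomy in the size of $\|z\|_{\mathfrak s}$ relative to $\varepsilon$, using in a crucial way the integrability hypothesis $\zeta>-|\mathfrak s|$ together with the support/normalization properties of $\rho_\varepsilon$: after scaling anisotropically it is supported in a ball of $\mathfrak s$-radius proportional to $\varepsilon$, integrates to one, and satisfies $\|\rho_\varepsilon\|_\infty\leq C\varepsilon^{-|\mathfrak s|}$.

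I would first establish \eqref{EstimateRegularKernel}. In the far regime $\|z\|_{\mathfrak s}\ge 2\varepsilon$, every $w$ in the support of $\rho_\varepsilon$ satisfies $\|z-w\|_{\mathfrak s}\sim \|z\|_{\mathfrak s}$, so the defining estimate of a kernel of order $\zeta$ (Definition~\ref{Def:Orderofkernel}) yields $|K_\varepsilon(z)|\leq C\|z\|_{\mathfrak s}^\zeta\leq C(\|z\|_{\mathfrak s}+\varepsilon)^\zeta$. In the near regime $\|z\|_{\mathfrak s}<2\varepsilon$, the uniform bound $|\rho_\varepsilon|\lesssim \varepsilon^{-|\mathfrak s|}$ and a change of variables give
\begin{align*}
|K_\varepsilon(z)|\lesssim \varepsilon^{-|\mathfrak s|}\int_{\|u\|_{\mathfrak s}\lesssim\varepsilon}\|u\|_{\mathfrak s}^\zeta\,du\lesssim \varepsilon^\zeta,
\end{align*}
the integral being finite and scaling as $\varepsilon^{\zeta+|\mathfrak s|}$ precisely because $\zeta>-|\mathfrak s|$. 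Since $\varepsilon^\zeta\leq C(\|z\|_{\mathfrak s}+\varepsilon)^\zeta$ in this regime, this completes the proof of \eqref{EstimateRegularKernel}.

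For the difference bound \eqref{EstimateRegularKernelDifference} I use the same dichotomy. In the near regime the triangle inequality, the first part of the lemma, and the estimate on $K$ itself give $|K(z)-K_\varepsilon(z)|\leq C\varepsilon^\zeta$, which is bounded by $C\varepsilon^\nu\|z\|_{\mathfrak s}^{\zeta-\nu}$ after inserting $\|z\|_{\mathfrak s}\lesssim\varepsilon$ and using $\zeta-\nu<0$. In the far regime, exploiting $\int\rho_\varepsilon=1$, I write
\begin{align*}
K_\varepsilon(z)-K(z)=\int\bigl(K(z-w)-K(z)\bigr)\rho_\varepsilon(w)\,dw
\end{align*}
and expand the integrand to first order along the segment $z-tw$. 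Since $\|z-tw\|_{\mathfrak s}\gtrsim\|z\|_{\mathfrak s}$ for every $t\in[0,1]$ and since the anisotropic scaling forces $|w_i|\leq\varepsilon$ for the spatial components and $|w_0|\leq\varepsilon^{s_0}$ for the temporal one, the derivative bounds of Definition~\ref{Def:Orderofkernel} combine into $|K(z-w)-K(z)|\leq C\varepsilon\|z\|_{\mathfrak s}^{\zeta-1}$. Interpolating this with the trivial bound $|K-K_\varepsilon|\leq C\|z\|_{\mathfrak s}^\zeta$ and using $\varepsilon\leq\|z\|_{\mathfrak s}/2$ yields $|K(z)-K_\varepsilon(z)|\leq C\varepsilon^\nu\|z\|_{\mathfrak s}^{\zeta-\nu}$ for any $\nu\in(0,1]$.

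The main technical point is the near regime in the proof of \eqref{EstimateRegularKernel}: this is where the hypothesis $\zeta\in(-|\mathfrak s|,0)$ is actually used, since the mollifier washes out the singularity of $K$ precisely because $\|u\|_{\mathfrak s}^\zeta$ is locally integrable. The anisotropy enters only through the scaling identity $\int_{\|u\|_{\mathfrak s}\leq r}\|u\|_{\mathfrak s}^\zeta\,du=Cr^{\zeta+|\mathfrak s|}$ and through the splitting of $w$ into its temporal and spatial components in the Taylor argument; everything else is a routine combination of the two regime estimates.
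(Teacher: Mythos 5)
Your proof is correct and follows the standard dichotomy between $\|z\|_{\mathfrak{s}}\geq 2\varepsilon$ and $\|z\|_{\mathfrak{s}}<2\varepsilon$, which is exactly how the cited reference \cite[Lemma 10.17]{Hairer_2014} argues (the paper itself defers the proof to that lemma). The only point you leave untouched is the short ``in particular'' clause that $K_\varepsilon$ is again a kernel of order $\zeta$; it follows by applying \eqref{EstimateRegularKernel} to $D^kK$ and, once $|k|_{\mathfrak{s}}$ exceeds $\zeta+|\mathfrak{s}|$, by moving the extra derivatives onto $\rho_\varepsilon$, but this is not the substance of the lemma.
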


For handling SQG we need later the order of the kernel which is given by the spatial convolution of the fractional heat kernel with the one of the Riesz transform. Using that this convolution for fixed $t>0$ decays as $|x|^{-d}$ and that it obeys the same scaling as the heat kernel, we obtain that it is also of the same order as the fractional heat kernel, see e.g. \cite{KOCH200122}.

\section{SQG}\label{s:sqg}
Now let us return to the SQG equation \eqref{IntroductionSQG} driven by a space-time white noise,
 i.e., in this section we consider,
\begin{equation}\label{SQG}
	\left\{
		\begin{array}{rll}
			\partial_t\theta+(-\Delta)^{\mu}\theta & = -R^\perp\theta\cdot\nabla\theta +\xi & \text{in } [0,\infty)\times\T^2,\\
			\theta(0,\cdot) & = \theta_0 & \text{in }\T^2,
		\end{array}
	\right.
\end{equation}
where $\theta_0:\T^2\rightarrow\R$ is the initial data.

We work with the scaling $\mathfrak{s}=(2\mu,1,1)$. Using that $R^\perp\theta$ is divergence free we obtain
\begin{align*}
		R^\perp\theta\cdot\nabla\theta= \div(\theta R^\perp \theta),
\end{align*}
hence,
\begin{align*}
	K\ast (R^\perp\theta\cdot\nabla\theta)= (\nabla K)\ast(\theta R^\perp \theta),
\end{align*}
where $K$ is the fractional heat kernel associated to the linear operator $\partial_t+(-\Delta)^{\mu}$. It is well-known that the fractional heat kernel in $d$-dimensions has the scaling property $K(t,x)=t^{-d/(2\mu)} K(1,t^{-1/(2\mu)}x)$ (see e.g. \cite{BerglundKuehn2017}).
Hence, it is a kernel of order $d=2$ in the sense of Definition \ref{Def:Orderofkernel} with respect to our scaling.  
The mild formulation of (\ref{SQG}) now reads
\begin{align}\label{SQGfixedPoint}
		\theta  =  (\nabla K)\ast\left(\theta R^\perp\theta\right)+K\ast \xi+K\ast_{\T^2}\theta_0.
\end{align}
The smoothing by the kernels  $\partial_j K$ are described by the parameter $\tilde{\mu}:=2\mu-1$, they are indeed $\tilde \mu$-smoothing since $K$ is $2\mu$-smoothing as it is shown in \cite{BerglundKuehn2017}.

Let us also make clear what we mean by space-time white noise on $\T^d$.

\begin{definition}
	Let $(\Omega,\mathscr{A},\mathbb{P})$ be a probability space. A random variable $\xi:\Omega\rightarrow\mathcal{D}'([0,\infty)\times\T^d)$ is called \textit{(Gaussian) space-time white noise} if
	\begin{itemize}
		\item[(i)] for each $\varphi\in\mathcal{D}([0,\infty)\times\T^d)$, the random variable $\omega\mapsto\langle\xi(\omega),\varphi\rangle$ is a Gaussian random variable,
		\item[(ii)] for all $\varphi,\psi\in\mathcal{D}([0,\infty)\times\T^d)$ one has
		\begin{align}\label{XiIsometry}
			\E[\langle\xi,\varphi\rangle\langle\xi,\psi\rangle]=(\varphi,\psi)_{L^2([0,\infty)\times\T^d)}.
		\end{align}
	\end{itemize}
	Therefore, the mapping $\varphi\mapsto\langle\xi,\varphi\rangle$ uniquely extends to a bounded linear operator from $L^2([0,\infty)\times\T^d)$ to $L^2(\Omega)$. Moreover, for any $\kappa>0$, it holds
	\begin{align*}
		\mathbb{P}\left(\xi\in C^{-\frac{(d+s_0)}{2}-\kappa}([0,\infty)\times\T^d)\right)=1,
	\end{align*}
	which is shown in \cite[Lemma 10.2]{Hairer_2014}, see also \cite{veraar2010}.
\end{definition}

To construct a regularity structure for SQG we proceed in the standard way, i.e., we first define a basis $\mathcal{F}$ of abstract symbols. The non-linearity is given by
$(\nabla K)\ast(\theta R^\perp\theta)= (\partial_1K) \ast (\theta R_2\theta)-(\partial_2K)\ast(\theta R_1\theta)$, which leads us to the following algorithm to 
construct the regularity structure.

We define ${\color{blue}\mathcal{I}[\xi]}$ as the symbol for $K\ast \xi$ (i.e., the solution to $\partial_t\theta+(-\Delta)^{\mu}\theta=\xi$ with zero initial data) and we denote by $\mathcal{F}_P:=\{{\color{blue}X^k}|k\in\N_0^{1+2}\}$ the basis of the polynomial regularity structure with ${\color{blue}X^0}=:{\color{blue}1}$, see Definition \ref{Def:PolynomialRegularityStructure}, and  $\mathcal{F}_0:=\overline{\mathcal{F}}_0:=\{{\color{blue}\mathcal{I}[\tau]}\}$, $\widetilde{\mathcal{F}}_{0}=\emptyset$.

Now, if we have a collection $\mathcal{F}_{n-1}=\overline{\mathcal{F}}_{n-1}\cup \widetilde{\mathcal{F}}_{n-1}$ of basis symbols given, we first construct the set $\widetilde{\mathcal{F}}_{n}$ by defining new symbols ${\color{blue}\mathcal{R}_i[\tau]}$, ${\color{blue}\mathcal{R}_i[\tau]\bar{\tau}}$, ${\color{blue}\mathcal{R}_i[\tau]X^k}$ and ${\color{blue}\tau X^k}$ for $i=1,2$,
${\color{blue}\tau}\in \overline{\mathcal{F}_{n-1}}$,
${\color{blue}\bar{\tau}}\in \overline{\mathcal{F}}_{l}$ for some $l\leq n-1$ and ${\color{blue}X^k}\in {\mathcal F}_P$. Here, we identify ${\color{blue}\tau 1}={\color{blue}\tau}$ and we set ${\color{blue}{\bar{\tau}\tau}}:={\color{blue}\tau\bar{\tau}}$ for any basis symbols $\tau,\bar\tau$.
Then, we define new symbols ${\color{blue}\mathcal{I}_i[\tau]}$ for $\tau\in \widetilde{\mathcal{F}}_{n}\cup \overline{\mathcal{F}}_{n-1}$ and these symbols form the set $\overline{\mathcal{F}}_{n}$.\\
Finally, we set $\mathcal{F}_{n}:=\widetilde{\mathcal{F}}_{n}\cup \overline{\mathcal{F}}_{n}$,  $\mathcal{F}_a:=\bigcup_{n=0}^{\infty}\mathcal{F}_{n}$  and  $\mathcal{F}:=\mathcal{F}_P\cup\mathcal{F}_{a}$.
Next, we fix some $\kappa\in(0,\tilde{\mu})$ and define the corresponding homogeneities by
\begin{align*}
	\abs{{\color{blue}\mathcal{I}[\Xi]}} & =-1+\mu-\kappa, 
	&\quad 
	\abs{{\color{blue}X^k}} & = \abs{k}_{\mathfrak{s}}, \\
	\abs{{\color{blue}\mathcal{I}_i[\tau]}} & = \abs{{\color{blue}\tau}}+\tilde{\mu},
	& \quad 
	\abs{{\color{blue}\mathcal{R}_i[\tau]}} & = \abs{{\color{blue}\tau}}, \\ 
	\abs{{\color{blue}\mathcal{R}_i[\tau]}{\color{blue}\bar{\tau}}} & =\abs{{\color{blue}\tau}}+\abs{{\color{blue}\bar{\tau}}},
	& \quad \abs{{\color{blue}\bar{\tau}}{\color{blue}X^k}} & =\abs{{\color{blue}\tau}}+\abs{k}_{\mathfrak{s}}
\end{align*}
and set $A$ to be the set of all homogeneities. Then $A$ is bounded from below and does not contain any accumulation point.
 We define the model space
\begin{align*}
	\mathcal{T} = \bigoplus_{\alpha\in A} \mathcal{T}_\alpha \coloneqq \bigoplus_{\alpha\in A} \langle{\color{blue}\tau}:{\color{blue}\tau}\in\mathcal{F},\ \abs{{\color{blue}\tau}}=\alpha\rangle
\end{align*}
and denote by $\mathcal{T}_P$ the sector generated by $\mathcal{F}_P$. Note that $\mathcal{T}_{\alpha}$ is finite dimensional for any $\alpha\in A$ by construction.

We only need to define the lift of the Riesz transforms $R_i$ on a substructure. Hence, we define the sector $V$ as the subspace of $\mathcal{T}$ generated by $\mathcal{F}_P$, ${\color{blue}\mathcal{I}[\Xi]}$ and all symbols ${\color{blue}\tau}\in \mathcal{F}_a$ of the form ${\color{blue}\tau}={\color{blue}\mathcal{I}_1[\bar{\tau}]}$ or $ {\color{blue}\tau}={\color{blue}\mathcal{I}_2[\bar{\tau}]}$ for some  ${\color{blue}\bar{\tau}}\in \mathcal{F}_a$. This is indeed a sector, provided that our models are admissible.\\
To describe the image of $R_i$ we define in a similar way the sector $W$ as the subspace of $\mathcal{T}$ generated by $\mathcal{F}_P$ and all symbols ${\color{blue}\tau}\in \mathcal{F}_a$ of the form ${\color{blue}\tau}={\color{blue}\mathcal{R}_1[\bar{\tau}]}$ or $ {\color{blue}\tau}={\color{blue}\mathcal{R}_2[\bar{\tau}]}$ for some  ${\color{blue}\bar{\tau}}\in \mathcal{F}_a$.

It will turn out, that we only need to consider functions taking values in $\mathcal{T}_{<\gamma}$ where we need choose $\gamma>1-\mu+\kappa$. Since the homogeneity of our symbols increases when choosing $\kappa$ smaller we can choose $\kappa,\gamma$ such that $\mathcal{T}_{<\gamma}=\mathcal{T}_{\leq 1- \mu}$. 

Let us introduce a handy graphical notation which gives a good overview of the abstract symbols in the model space $\mathcal{T}$. This notation is also independent of the indices $i$ and $j$ as the corresponding symbols share the same properties. An appearance of ${\color{blue}\Xi}$ is represented as a simple dot ${\color{blue}\cdot}$. The symbol that represents convolution with $K$, i.e. ${\color{blue}\mathcal{I}[\cdot]}$, is viewed as a straight line ${\color{blue}\RS{I}}$. Hence, the symbol ${\color{blue}\mathcal{I}[\Xi]}$ has the graphical representation ${\color{blue}\RS{i}}$. The symbol ${\color{blue}\mathcal{R}_i[\cdot]}$ associated to a component of the orthogonal Riesz transform is represented as a dotted line ${\color{blue}\RS{5}}$ and the symbol ${\color{blue}\mathcal{I}_i[\cdot]}$ associated to a derivative of the fractional heat kernel is viewed as zigzag line ${\color{blue}\RS{2}}$. The multiplication of two such trees is done by joining them at the root. For example, we have
\begin{align*}
{\color{blue}\mathcal{R}_i[\mathcal{I}[\Xi]]}={\color{blue}\RS{4i}}_i, \quad {\color{blue}
	\mathcal{R}_i[\mathcal{I}[\Xi]]
	[\mathcal{I}[\Xi]]}={\color{blue}\RS{{4i}{r}}}_i.
 \quad \text{and} \quad {\color{blue}\mathcal{I}_j[
 	\mathcal{R}_i[\mathcal{I}[\Xi]]
 	[\mathcal{I}[\Xi]]]}={\color{blue}\RS{2{4i}{r}}}_{ji}.
\end{align*}

It will also turn out, that all symbols containing a polynomial have a regularity strictly larger than $0$. We get, by dropping those symbols,
\begin{align*}
\overline{\mathcal F}_0= & \{{\color{blue}\RS{i}}\} \\
& -1-\kappa+ \mu\\
\tilde{\mathcal F}_1= &    \left\{ {\color{blue}\RS{4i}_i}, {\color{blue}\RS{{4i}r}_i}\right\} \\
& -1-\kappa+ \mu,-2-2\kappa+2 \mu \\
\overline{\mathcal F}_1 =&  \left\{ {\color{blue}\RS{24i}_{ji}}, {\color{blue}\RS{2{4i}r}_{ji}}\right\} \\
& -2-\kappa+3\mu,-3-2\kappa+4 \mu \\
\tilde{\mathcal F}_2= &    \left\{ {\color{blue}\RS{424i}}, {\color{blue}\RS{42{4i}r}}, {\color{blue}\RS{{424i}{r}}}, {\color{blue}\RS{{42{4i}r}{r}}},{\color{blue}\RS{{424i}{35r}}}, {\color{blue}\RS{{42{4i}r}{35r}}},{\color{blue}\RS{{424i}{3{4i}r}}}, {\color{blue}\RS{{42{4i}r}{3{l}{6i}}}}\right\}\\
& -2-\kappa+3\mu,
 -3-2\kappa+4\mu,
 -3-2\kappa+4\mu,
-4-3\kappa+5\mu,\\
&-4-2\kappa+6 \mu,
-5-3\kappa+7 \mu,
-5-3\kappa+7 \mu,
-6-4\kappa+8 \mu.
\end{align*}
We also dropped the indices in $\tilde{\mathcal F}_2$ for simplicity. We obtain that $V,W$ have regularity $-1-\kappa+\mu$ and $\min A=-2-2\kappa+2\mu$ (this minimum is the homogeneity of ${\color{blue}\RS{{4i}r}_i}$). Hence, any basis symbol containing a polynomial has at least the homogeneity of ${\color{blue}\RS{{4i}r}_i X^k}$, i.e., $2\mu-2-2\kappa+|k|_s$. For $k\neq 0$ we already have $2\mu-2+|k|_s>0$ since $\mu> 2/3$. This implies that if we choose $\kappa$ small enough, no symbols containing any polynomial appear in $\mathcal{T}_{<0}$.

The only missing piece in the regularity structure $\mathscr{T}_{SQG}$ describing the SQG equation is the structure group $G$. It can be constructed for the full model space $\mathcal{T}$ as in \cite[Theorem 8.24]{Hairer_2014}, we will not go into the details here.  Then $\mathscr{T}_{SQG}:=(A,\mathcal{T},G)$ is the regularity structure generated by SQG.\\

Next, we will lift any \textit{continuous} approximation $\xi_\varepsilon$ to the driving noise $\xi$ to a model $M^\varepsilon=(\Pi^\varepsilon,\Gamma^\varepsilon,\Sigma^\varepsilon)$ in a canonical way.
Therefore, let $\rho:\R^{1+2}\rightarrow\R$ be a symmetric mollifier and set $\rho_\varepsilon(t,x)\coloneqq\varepsilon^{-4}\rho(\varepsilon^{-2}t,\varepsilon^{-1}x)$. Then $\xi_\varepsilon\coloneqq\xi\ast\rho_\varepsilon$ is a smooth approximation to $\xi$ and we can define the canonical model $(\Pi^\varepsilon,\Gamma^\varepsilon,\Sigma^\varepsilon)$ as follows. First, for $t\in\R,\ x,y\in\R^2$ and $k=(k_0,\overline k)\in\N^{1+2}$, we set for the symbols in $\mathcal{F}_P$
\begin{align*}
(\Pi_x^{\varepsilon,t}{\color{blue}X^k})(y)\coloneqq\begin{cases} (y-x)^{\overline k}, & k_0=0 \\ 0 ,  & k_0>0.\end{cases}
\end{align*}
Then, we recursively define for ${\color{blue}\tau,\bar\tau}\in \mathcal{F}$ such that $\tau\bar\tau\in \mathcal{F}$ 
\begin{align*}
(\Pi_{x}^{\varepsilon,t}{\color{blue}\tau\bar\tau})(y)\coloneqq (\Pi_{x}^{\varepsilon,t}{\color{blue}\tau})(y)(\Pi_{x}^{\varepsilon,t}{\color{blue}\bar\tau})(y)
\end{align*}
and for $\tau\in\mathcal{F}_a$
\begin{align*}
(\Pi_{x}^{\varepsilon,t}{\color{blue}\mathcal{I}_j[\tau]})(y) 
\coloneqq 
&(\partial_j K\ast(\Pi_x^{\varepsilon,\cdot}\Sigma_x^{\cdot t}{\color{blue}\tau}))(t,y)\\
&- \sum_{\abs{\overline k}<\abs{{\color{blue}\tau}}+\tilde\mu, k_0=0} \frac{(y-x)^{\overline k}}{\overline k!}(D^{\overline k}\partial_j K\ast(\Pi_x^{\varepsilon,\cdot} \Sigma_x^{\cdot t}{\color{blue}\tau}))(t,x).
\end{align*}
Similarly, we define for $\tau\in\overline{\mathcal{F}}_n$
\begin{align*}
(\Pi_{x}^{\varepsilon,t}{\color{blue}\mathcal{R}_i[\tau]})(y) \coloneqq R_i \ast_{\T^2}(\Pi_{x}^{\varepsilon,t}{\color{blue}\tau})(y) - \sum_{\abs{\overline k}<\abs{{\color{blue}\tau}},k_0=0}\frac{(y-x)^{\overline k}}{\overline k!}(D^{\overline k}R_i)\ast_{\T^2}(\Pi_{x}^{\varepsilon,t}{\color{blue}\tau})(x)
\end{align*}
and $0=\Pi_{x}^{\varepsilon,t}{\color{blue}\mathcal{I}_j[\tau]}=\Pi_{x}^{\varepsilon,t}{\color{blue}\mathcal{R}_i[\tau]}$ for ${\color{blue}\tau}\in\mathcal{F}_P$. These maps are then (bi-)linearly extended to $\mathcal{T}$.
The maps $\Gamma_{xy}^{t,\varepsilon}$ and $\Sigma_x^{st,\varepsilon}$ are completely determined by the algebraic relations in the definition of a model. The analytical bounds follow as in \cite{Hairer_2014}.

\begin{remark}
In order to explicitly construct the mappings  $\Gamma_{xy}^{t,\varepsilon}$ and $\Sigma_x^{s,t,\varepsilon}$, one rather builds the mappings $\hat \Gamma_{(t,x)(s,y)}^{\varepsilon}$ for the homogeneous model first as it has been done in \cite{Hairer_2014}. Then, one sets $\Gamma_{xy}^{t,\varepsilon}:= \hat \Gamma_{(t,x)(t,y)}^{\varepsilon}$ and $\Sigma_x^{s,t,\varepsilon}:=\hat \Gamma_{(t,x)(s,x)}^{\varepsilon}$.
\end{remark}

\subsection{Abstract Fixed Point Equation}
In order to lift the SQG equation to an abstract fixed point equation in some $\mathcal{D}^{\gamma,\eta}(T;M)$ we assume that we are given a model $M$ and some constants $\gamma>0$, $\eta<0$. We will later on give the precise assumptions on those quantities and on $M$.\\
We define the abstract integration maps $\mathcal{I}_j$ of order $\tilde{\mu}$ for $\mathscr{T}_{SQG}$ by
\begin{align*}
\mathcal{I}_j{\color{blue}\tau}\coloneqq 
\begin{cases}
	{\color{blue}\mathcal{I}_j[\tau]}, & {\color{blue}\tau}\in \mathcal{F}\setminus\mathcal{F}_P,\\
 	0 & {\color{blue}\tau}\in \mathcal{F}_P,
\end{cases}
\end{align*}
and linearly extended to $\mathcal{T}$. It is then immediate that $\mathcal{I}_j$ satisfies the first two requirements of an abstract integration map while the third requirement follows from the general construction of the structure group.\\
In the following we assume that we have an admissible model $M=(\Pi,\Sigma,\Gamma)$ for $\mathscr{T}_{SQG}$ given. The canonical models $(\Pi^\varepsilon,\Gamma^\varepsilon,\Sigma^\varepsilon)$ built in the previous section satisfy this condition directly and also when renormalising them this holds true.\\
In order to define the lift $\mathcal{K}_j$ of the heat kernel $\partial_j K=:K_j$, we decompose it as
\begin{align*}
K_j=\tilde K_j+L_j^1,
\end{align*}
where $\tilde K_j$ is a kernel satisfying Assumption \ref{AssumptionK} for $\beta=\tilde{\mu}$ (see \cite{BerglundKuehn2017}) and $L_j^1$ is a smooth function. For  ${\color{blue}f}\in\mathcal{D}^{\gamma',\eta'}(T;V;M)$, where $0<\gamma'\leq \gamma$ and $\eta'\leq \eta\in\R$, $\bar\gamma>0$, we  define for $t\leq T$
\begin{align}\label{LiftOfSmoothPart}
\begin{split}
(\mathcal{L}_j^1{\color{blue}f})(t,x) 
&\coloneqq \sum_{\abs{k}_{\mathfrak{s}}\leq\bar\gamma}\frac{1}{k!}{\color{blue}X^k}(D^kL_j^1\ast\mathcal{R}_\cdot{\color{blue}f})(t,x)\\
&\coloneqq
\sum_{\abs{k}_{\mathfrak{s}}\leq\bar\gamma}\frac{1}{k!}{\color{blue}X^k}\int_0^t\mathcal{R}_s{\color{blue}f}_s(D^kL_j^1(t-s,x-\cdot))\ds,
\end{split}
\end{align}
where $\mathcal{R}$ denotes the corresponding reconstruction operator on $\mathcal{D}^{\gamma',\eta'}(T;M)$, and it is immediate to see that $\mathcal{L}_j^1:\mathcal{D}^{\gamma',\eta'}(T;M)\rightarrow\mathcal{D}^{\bar\gamma,\eta'}(T;V;M)$, since the function $L_j^1\ast\mathcal{R}_\cdot{\color{blue}f}$ is smooth.

Now, if $\eta'\in(\gamma'-2\mu,\gamma')$, by Theorem \ref{TheoremAbstractIntegration} we have that
\begin{align*}
\mathcal{K}_j:\mathscr{D}^{\gamma',\eta'}(T;M) & \rightarrow\mathscr{D}^{\gamma'+\tilde{\mu},(\eta'\wedge -2-2\kappa+2\mu)+\tilde{\mu}}(T;V;M),\\
(\mathcal{K}_j{\color{blue}f})(t,x) & \coloneqq \mathcal{I}_j{\color{blue}f}(t,x)+\mathcal{J}_j(t,x){\color{blue}f}(t,x)+(\mathcal{N}_j{\color{blue}f})(t,x),
\end{align*}
where $\mathcal{J}_j$ and $\mathcal{N}_j$ are given by (\ref{DefinitionJ}) and (\ref{DefinitionN}), respectively, is the lift of $\tilde K_j$.

To build an operator that represents the Riesz transform $R^\perp$ on the level of regularity structures we proceed in the same way and decompose $R_i$ as $R_i=\tilde R_i+L^2_i$, where $\tilde R_i$ satisfies Assumption \ref{AssumptionR}. $L^2_i$ can be lifted as $L^1_i$,
\begin{align}\label{LiftOfSmoothPartR}
\begin{split}
	(\mathcal{L}_i^2{\color{blue}f})(t,x) 
	& \coloneqq \sum_{\abs{k}_{\mathfrak{s}}\leq\bar\gamma}\frac{1}{k!}{\color{blue}X^k}(D^kL_i^2\ast_{\T^2}\mathcal{R}_t{\color{blue}f})(t,x)\\
	& \coloneqq
	\sum_{\abs{k}_{\mathfrak{s}}\leq\bar\gamma}\frac{1}{k!}{\color{blue}X^k}\mathcal{R}_t{\color{blue}f_t}(D^kL_i^2(,x-\cdot))\ds,
\end{split}
\end{align}
which gives $\mathcal{L}_2^1:\mathcal{D}^{\gamma',\eta'}(T;V;M)\rightarrow\mathcal{D}^{\gamma',\eta'}(T;W;M)$, see also the proof of Theorem \ref{TheoremAbstractIntegrationR}.

We set
\begin{align*}
	\mathcal{I}_{{R}_i}{\color{blue}\tau}\coloneqq
	\begin{cases}
		{\color{blue}\mathcal{R}_i[\tau]}, & {\color{blue}\tau}\in 
		\overline{\mathcal{F}}_n,n\in\N \\
		0 & \text{else},
	\end{cases}
\end{align*}
and linearly extended to $\mathcal{T}$.

Next, for models realizing $R_i$ we define $\mathcal{J}_{R_i}$ and $\mathcal{N}_{R_i}$ as in (\ref{DefinitionJR}) and (\ref{DefinitionNR}).

Now, if $\eta\in(\gamma-2\mu,\gamma)$, by Theorem \ref{TheoremAbstractIntegrationR} the lift of the Riesz transform is given by
\begin{align*}
\mathscr{R}_i:\mathscr{D}^{\gamma,\eta}(T;V;M) & \rightarrow\mathscr{D}^{\gamma,\eta}(T;W;M),\\
(\mathscr{R}_i{\color{blue}f})(t,x) & \coloneqq \mathcal{I}_{R_i}{\color{blue}f}(t,x)+\mathcal{J}_{R_i}(t,x){\color{blue}f}(t,x)+(\mathcal{N}_{R_i}{\color{blue}f})(t,x).
\end{align*}

Finally, we define for ${\color{blue}\tau_1}, {\color{blue}\tau_2}\in \mathcal{F}$ with ${\color{blue}\tau_1}={\color{blue}\mathcal{R}_i\bar{\tau}}$ for some ${\color{blue}\bar{\tau}} \in \mathcal{F}$ the abstract product by
\begin{align*}
	{\color{blue}\tau_1} \star {\color{blue}\tau_2}:=
	{\color{blue}\mathcal{R}_i\bar{\tau}}{\color{blue}\tau_2},
\end{align*}
${\color{blue}X^{k_1}}\star{\color{blue}X^{k_2}}\equiv {\color{blue}X^{k_1+k_2}}$ 
and bilinearly extended to all of $W \times \mathcal{T}$. 
By Theorem \ref{TheoremAbstractProduct} we have
for ${\color{blue}f_1}\in\mathscr{D}^{\gamma,\eta}_{-1-\kappa+\mu}(T;W;M)$, ${\color{blue}f_2}\in\mathscr{D}^{\gamma,\eta}_{-1-\kappa+\mu}(T;V;M)$ that
${\color{blue}f_1}\ast {\color{blue}f_1} \in \mathscr{D}^{\gamma',\eta'}_\alpha(T;M)$ with
\begin{align*}
	\alpha=-2-2\kappa+2\mu,\; 
	\gamma'=\gamma-1-\kappa+\mu, \; 
	\eta'=(\eta-1-\kappa+\mu)\land 2\eta.
\end{align*}

What is left is to determine $\gamma$ and $\eta$.\\ For ${\color{blue}\Theta}\in\mathscr{D}^{\gamma,\eta}(T;V;M)=\mathscr{D}^{\gamma,\eta}_{-1-\kappa+\mu}(T;V;M)$ with $\gamma>0$ and $\gamma-2\mu<\eta<\gamma$ we obtain
\begin{align*}
{\color{blue}\Theta}\star\mathscr{R}_i{\color{blue}\Theta}
\in\mathscr{D}^{\gamma-1-\kappa+\mu,(\eta-1-\kappa+\mu)\land 2\eta}_{-2-2\kappa+2\mu}(T;M).
\end{align*}
We need  $\gamma>1+\kappa-\mu$, which also implies $\gamma>0$ and $\gamma-1-\kappa-\mu<(\eta-1-\kappa+\mu)\land 2\eta<\gamma-1-\kappa+\mu$ to apply $\mathcal{K}_j$ to this function. We then obtain
\begin{align*}
	\mathcal{K}_j({\color{blue}\Theta}\star\mathscr{R}_i{\color{blue}\Theta})
	\in\mathscr{D}^{\gamma-2-\kappa+3\mu,((\eta-1-\kappa+\mu)\land 2\eta)+2\mu-1}_{-3-2\kappa+4\mu}(T;V;M).
\end{align*}
Aiming for a self-mapping property we need here $\gamma-2-\kappa+3\mu>\gamma$, $-3-2\kappa+4\mu>-1-\kappa+\mu$ and $((\eta-1-\kappa+\mu)\land 2\eta)+2\mu-1> \eta$. It follows from $\mu>2/3$ that all these inequalities are satisfied for $\kappa$ sufficiently small if we have $\eta>-2\mu+1=-\tilde\mu$ and choose $\gamma=1+2\kappa-\mu$.

With all these abstract linear operators at hand we can formulate the abstract version of the fixed point equation (\ref{SQGfixedPoint}) as follows. Given the regularity structure $\mathscr{T}_{SQG}$ together with any admissible model $M$. Let $\gamma>0$, $\eta<0$ and $\theta_0\in C^\eta(\T^2)$. Then by Lemma 3.6 in \cite{hairer2017discretisations} the function $S(t)\theta_0:=K(t,\cdot)\ast_{\T^2}\theta_0$ can be lifted to $\mathscr{D}^{\gamma,\eta}(T;V)$ such that $\triple S(\cdot)\theta_0 \triple_{\gamma,\eta;T} \leq c \|\theta_0\|_{C^\eta}$. We denote this lift by ${\color{blue}S}_\cdot\theta_0$ and consider the fixed point problem
\begin{align}\label{SQGfixedPointAbstract}
\begin{split}
	{\color{blue}\Theta}
	= & (\mathcal{K}_1+\mathcal{L}_1^1)P_{\leq 2-3\mu}({\color{blue}\Theta}\star (\mathscr{R}_2+\mathcal{L}_2^2){\color{blue}\Theta})
	-(\mathcal{K}_2+\mathcal{L}_2^1)P_{\leq 2-3\mu}({\color{blue}\Theta}\star(\mathscr{R}_1+\mathcal{L}_1^2){\color{blue}\Theta})\\
	&+{\color{blue}\mathcal{I}[\Xi]}+{\color{blue}S}_\cdot\theta_0
\end{split}
\end{align}
for ${\color{blue}\Theta}\in\mathscr{D}^{\gamma,\eta}(T;V;M)$, where $P_{\leq 2-3\mu}$ is the projection onto $\mathcal{T}_{\leq 2-3\mu}$.
Let us conclude this section with a theorem concerning existence and uniqueness of solutions.

\begin{theorem}\label{theprem:abstractsolvable}
Let $\mathscr{T}_{SQG}$ be the regularity structure for the SQG equation with $2/3< \mu \leq 1$ and let  $-2\mu+1<\eta<0$. 

Then, for $\gamma=1+2\kappa-\mu$ and $\kappa$ small enough there exists for every admissible model $M=(\Pi,\Gamma,\Sigma)$, and $\theta_0\in C^\eta(\T^2)$, a time $T'\in(0,\infty]$ such that, for every $T<T'$ the equation (\ref{SQGfixedPointAbstract}) admits a unique solution ${\color{blue}\Theta}\in\mathscr{D}^{\gamma,\eta}(T;V)$ in $[0,T)\times\T^2$. Furthermore, if $T'<\infty$, then
\begin{align*}
\lim_{T\rightarrow T'}\norm{\mathcal{R}_T\mathcal{S}_T(\theta_0,M)_T}_{C^\eta}=\infty,
\end{align*}
where $\mathcal{S}_T:(\theta_0,M)\mapsto{\color{blue}\Theta}$ is the solution map up to time $T$. Finally, for every $T<T'$, the solution map $\mathcal{S}_T$ is jointly Lipschitz continuous in a neighbourhood around $(\theta_0,M)$ in the following sense. Let $(\overline\theta_0,\overline M)$ be some initial data and define ${\color{blue}\overline\Theta}\coloneqq\mathcal{S}_T(\overline\theta_0,\overline Z)$, then for any $\varepsilon_0>0$ there is $L>0$ such that $\triple {\color{blue}\Theta};{\color{blue}\overline\Theta}\triple_{\gamma,\eta;T}\leq L\varepsilon$ whenever $\norm{\theta_0-\overline\theta_0}_{C^\eta}+\triple M;\overline M\triple_{\gamma;T}\leq\varepsilon$ for  $\varepsilon\in(0,\varepsilon_0]$.
\end{theorem}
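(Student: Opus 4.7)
The plan is to recast \eqref{SQGfixedPointAbstract} as a fixed point of a map $\mathcal{M}_T\colon\mathscr{D}^{\gamma,\eta}(T;V;M)\to\mathscr{D}^{\gamma,\eta}(T;V;M)$ on a closed ball, and to apply the Banach contraction principle by exploiting a short-time factor $T^{\theta}$, $\theta>0$, which arises because the chain of abstract operators gains strictly more regularity than is required to return to the starting space. Define
\[
\mathcal{M}_T({\color{blue}\Theta})\coloneqq(\mathcal{K}_1{+}\mathcal{L}_1^1)P_{\leq 2-3\mu}\bigl({\color{blue}\Theta}\star(\mathscr{R}_2{+}\mathcal{L}_2^2){\color{blue}\Theta}\bigr)-(\mathcal{K}_2{+}\mathcal{L}_2^1)P_{\leq 2-3\mu}\bigl({\color{blue}\Theta}\star(\mathscr{R}_1{+}\mathcal{L}_1^2){\color{blue}\Theta}\bigr)+{\color{blue}\mathcal{I}[\Xi]}+{\color{blue}S}_\cdot\theta_0.
\]
The two rightmost terms lie in $\mathscr{D}^{\gamma,\eta}(T;V;M)$ by Lemma~3.6 of \cite{hairer2017discretisations} and since $|{\color{blue}\mathcal{I}[\Xi]}|=-1-\kappa+\mu>\eta$. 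For the nonlinearity, I would chain the abstract product (Theorem~\ref{TheoremAbstractProduct}) with the lifts $\mathscr{R}_i,\mathcal{L}_i^2$ (Theorem~\ref{TheoremAbstractIntegrationR}) and $\mathcal{K}_j,\mathcal{L}_j^1$ (Theorem~\ref{TheoremAbstractIntegration}), reading off exactly the regularity parameters already computed in the paragraph preceding the theorem. The outcome is that the nonlinear contribution sits in $\mathscr{D}^{\gamma-2-\kappa+3\mu,\bar\eta+\tilde\mu}_{-3-2\kappa+4\mu}(T;V;M)$ with $\bar\eta=(\eta-1-\kappa+\mu)\wedge 2\eta$, and the three strict inequalities $\gamma-2-\kappa+3\mu>\gamma$, $-3-2\kappa+4\mu>-1-\kappa+\mu$, $\bar\eta+\tilde\mu>\eta$ all hold once $\mu>2/3$, $\eta>-2\mu+1$ and $\kappa$ is chosen sufficiently small.

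Because of these strict inequalities, a standard embedding $\mathscr{D}^{\gamma',\eta'}(T;V;M)\hookrightarrow \mathscr{D}^{\gamma,\eta}(T;V;M)$ (with $\gamma'>\gamma$, $\eta'>\eta$) produces a prefactor of the form $T^{\theta}$ for some $\theta>0$ depending on the gap (one can take $\theta=(3\mu-2-\kappa)/s_0$). This yields the self-mapping estimate
\[
\triple\mathcal{M}_T({\color{blue}\Theta})-{\color{blue}\mathcal{I}[\Xi]}-{\color{blue}S}_\cdot\theta_0\triple_{\gamma,\eta;T}\leq C\,T^{\theta}\,\bigl(1+\triple{\color{blue}\Theta}\triple_{\gamma,\eta;T}\bigr)^{2},
\]
with $C$ polynomial in $\|M\|_{\gamma+\tilde\mu;T}$ and in the $L^{\infty}$-norms of finitely many derivatives of $L_j^1,L_i^2$. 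Bilinearity of $\star$ together with the linearity of $\mathcal{K}_j,\mathcal{L}_j^1,\mathscr{R}_i,\mathcal{L}_i^2$ upgrades this to a contraction estimate
\[
\triple\mathcal{M}_T({\color{blue}\Theta});\mathcal{M}_T({\color{blue}\overline\Theta})\triple_{\gamma,\eta;T}\leq C\,T^{\theta}\bigl(\triple{\color{blue}\Theta}\triple_{\gamma,\eta;T}+\triple{\color{blue}\overline\Theta}\triple_{\gamma,\eta;T}\bigr)\triple{\color{blue}\Theta}-{\color{blue}\overline\Theta}\triple_{\gamma,\eta;T}.
\]
Choosing the ball of radius $R:=1+\triple{\color{blue}\mathcal{I}[\Xi]}+{\color{blue}S}_\cdot\theta_0\triple_{\gamma,\eta;T}$ and $T$ small enough that $CT^{\theta}(R+1)^{2}\leq R/2$ and $2CT^{\theta}R\leq 1/2$, the Banach fixed point theorem delivers a unique ${\color{blue}\Theta}\in\mathscr{D}^{\gamma,\eta}(T;V;M)$.

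The maximal existence time $T'$ is then obtained by gluing local solutions via uniqueness. For the blow-up alternative, I would argue by contradiction: if $T'<\infty$ and $\liminf_{T\uparrow T'}\|\mathcal{R}_T{\color{blue}\Theta}_T\|_{C^{\eta}}<\infty$, then along a subsequence $T_n\uparrow T'$ the initial data $\theta_n:=\mathcal{R}_{T_n}{\color{blue}\Theta}_{T_n}$ have bounded $C^{\eta}$-norm; since the local existence time above depends only on $\|\theta_n\|_{C^{\eta}}$ and on $\|M\|_{\gamma+\tilde\mu;T}$ (which is finite on bounded time intervals), we obtain an extension of ${\color{blue}\Theta}$ beyond $T'$, contradicting maximality. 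Lipschitz dependence in $(\theta_0,M)$ follows by combining the continuity-in-models statements in Theorems~\ref{TheoremAbstractIntegration} and~\ref{TheoremAbstractIntegrationR}, the Lipschitz bound $\triple{\color{blue}S}_\cdot\theta_0;{\color{blue}S}_\cdot\overline\theta_0\triple_{\gamma,\eta;T}\leq c\|\theta_0-\overline\theta_0\|_{C^{\eta}}$, and the standard implicit-function argument for parameter-dependent contractions, noting that the contraction constant is uniform on a neighbourhood of $(\theta_0,M)$.

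The hardest part is not the abstract Picard loop itself but the bookkeeping that makes it work: one has to verify that the three strict inequalities closing the self-map really hold simultaneously for a common value of $\kappa$, that the cut-off $P_{\leq 2-3\mu}$ does not discard any symbol whose absence would spoil consistency of the reconstruction with \eqref{SQGfixedPoint}, and that the $\mathcal{D}^{\gamma',\eta'}\hookrightarrow\mathcal{D}^{\gamma,\eta}$ embedding indeed produces a genuine $T^{\theta}$ factor in the inhomogeneous-model setting (as opposed to merely a $T^{0}$ bound); this is where the constraint $\mu>2/3$ enters as the subcriticality threshold and where the cancellation encoded by writing the nonlinearity as $\div(\theta R^{\perp}\theta)$ is implicitly used to keep $\beta=\tilde\mu$ instead of $\beta=\tilde\mu-1$.
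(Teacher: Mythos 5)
Your proposal is correct and takes essentially the same approach as the paper, which simply invokes Theorem 3.10 of \cite{hairer2017discretisations} as a black-box; you are reproducing the Picard iteration and short-time contraction argument that underlies that theorem. Two small points deserve attention, though neither invalidates the argument. First, the short-time factor you extract from the embedding $\mathscr{D}^{\gamma',\eta'}\hookrightarrow\mathscr{D}^{\gamma,\eta}$ is controlled by the improvement in the \emph{weight} parameter $\eta$, not (directly) by the improvement in $\gamma$: the gain is $\bar\eta+\tilde\mu-\eta=\min(3\mu-2-\kappa,\,\eta+2\mu-1)$, which equals your $3\mu-2-\kappa$ only when $\eta\geq-1-\kappa+\mu$; for $\eta$ close to the lower endpoint $-2\mu+1$ the exponent $\theta$ degenerates to $(\eta+2\mu-1)/s_0$. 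This is still strictly positive under the hypotheses, so the contraction survives, but the stated value of $\theta$ should be replaced by $\min(3\mu-2-\kappa,\,\eta+2\mu-1)/s_0$. Second, when passing from the contraction estimate for fixed model to the joint Lipschitz dependence in $(\theta_0,M)$, you need the versions of Theorems~\ref{TheoremAbstractIntegration} and~\ref{TheoremAbstractIntegrationR} that compare two modelled distributions over two \emph{different} models, i.e.\ the $\triple F;\overline F\triple$ estimates rather than the $\triple F-\overline F\triple$ estimates; the formula you write for the contraction step uses the latter and should be upgraded, exactly as you sketch in the last sentence of the penultimate paragraph. With these cosmetic adjustments the argument matches the one the paper appeals to.
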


\begin{proof}
This theorem can be seen as special case of Theorem 3.10 of \cite{hairer2017discretisations} combined with the estimates on $\mathscr{R}_i$ from Theorem \ref{TheoremAbstractIntegration} as well as the fact that, whenever ${\color{blue}\Theta}\in\mathcal{D}^{\gamma,\eta}_{-1-\kappa+\mu}(T;V;M)$, the right hand side of (\ref{SQGfixedPointAbstract}) belongs to $
\mathcal{D}^{\gamma,\eta}_{-1-\kappa+\mu}(T;V;M)$.
\end{proof}

A direct calculation yields that the solution ${\color{blue}\Theta}$ to (\ref{SQGfixedPointAbstract}) for any admissible model and $ \mu> 3/4$ and $\kappa$ small enough is given by
\begin{align}\label{SolutionFormula}
{\color{blue}\Theta}(t,x) = {\color{blue}\RS{i}} + {\color{blue}\RS{2{4i}{r}}}_{12}-{\color{blue}\RS{2{4i}{r}}}_{21}+ \Theta_{{\color{blue}1}}(t,x){\color{blue}1} 
 + r(t,x),
\end{align}
where
$r(z)$ is a modelled distribution with components of homogeneity greater than $\gamma$, meaning this part can be ignored for the description of $\mathcal{R}{\color{blue}\Theta}$.

In view of Theorem \ref{ReconstructionTheorem}(ii) the reconstruction operator maps $\mathscr{D}^{\gamma,\eta}(T;V)$ onto $C^{\bar{\delta},\alpha}_{\eta-\gamma;T}$ where $\alpha=-2+2\mu-2\kappa$ and $\bar{\delta}<-2+3\mu-\kappa$.

\subsection{Renormalization and Convergence of Models}
The canonical models constructed above do not converge in a suitable sense for $\varepsilon\to0$, hence we have to renormalize them, i.e., to find models $(\hat\Pi^\varepsilon,\hat\Gamma^\varepsilon,\hat\Sigma^\varepsilon)$ which converge.\\
In the following, we concentrate on the case for $\mu>4/5$ to keep the presentation short.\\
Therefore, let $\mathcal{F}_-$ be the set of basis symbols of negative homogeneity of the regularity structure $\mathscr{T}_{SQG}$. The key stochastic estimates are the following: 
We need to define the renormalised models in such a way that there are constants  $\nu,\nu',\delta>0$ such that for every ${\color{blue}\tau}\in\mathcal{F}_-$ there is random variable $\Pi^{t}_x{\color{blue}\tau}$ with
\begin{align}\label{KeyEstimate}
\E\left[\abs{\left(\Pi^{t}_x{\color{blue}\tau},\varphi_x^\lambda\right)}^2\right]
&\leq C \lambda^{2(\abs{{\color{blue}\tau}}+\nu)},\\
\label{KeyEstimateTime}
\E\left[\abs{\left(\Pi^{t}_x{\color{blue}\tau}-\Pi_x^{s}{\color{blue}\tau},\varphi_x^\lambda\right)}^2\right]
&\leq C \lambda^{2(\abs{{\color{blue}\tau}}+\nu-\delta)}|t-s|^{2\delta/s_0}
\end{align}
and
\begin{align}
\label{KeyEstimateDifference}
\E\left[\abs{\left(\hat\Pi^{\varepsilon,t}_x{\color{blue}\tau}-\Pi_x^t{\color{blue}\tau},\varphi_x^\lambda\right)}^2\right]
&\leq C' \varepsilon^{\nu'} \lambda^{2(\abs{{\color{blue}\tau}}+\nu)},\\
\label{KeyEstimateDifferenceTime}
\E\left[\abs{\left(\hat\Pi^{\varepsilon,t}_x{\color{blue}\tau}-\Pi_x^t{\color{blue}\tau}-\hat\Pi^{\varepsilon,s}_x{\color{blue}\tau}+\Pi_x^s{\color{blue}\tau},\varphi_x^\lambda\right)}^2\right]
&\leq C' \varepsilon^{\nu'} \lambda^{2(\abs{{\color{blue}\tau}}+\nu-\delta)}|t-s|^{2\delta/s_0}
\end{align}
uniformly over $\varepsilon,\lambda\in(0,1]$ and $\varphi\in\mathcal{B}_r$ for some $r>0$ and (locally) uniformly in $x$ and $t,s$.
Then the renormalised models converge to a model $(\Pi,\Gamma,\Sigma)$ with time regularity $\delta$ and we have $\mathcal{R}_{\varepsilon}{\color{blue}\Theta} \to \mathcal{R}{\color{blue}\Theta}$, where $\mathcal{R}_{\varepsilon}$ and $\mathcal{R}$ are the reconstruction operator of the models $(\hat\Pi^\varepsilon,\hat\Gamma^\varepsilon,\hat\Sigma^\varepsilon)$ resp. $(\Pi,\Gamma,\Sigma)$ and ${\color{blue}\Theta}$ is the solution of \eqref{SQGfixedPointAbstract}. Hence, $\mathcal{R}{\color{blue}\Theta}$ is the solution to the SQG equation driven by space-time white noise.  For the precise statement and its proof we refer to  \cite[Theorem 10.7]{Hairer_2014} and \cite[Remark 6.3]{hairer2017discretisations}.

Due to the local subcriticality of the SQG equation the number of symbols in $\mathcal{F}_-$ is finite. One can easily verify that 
\begin{align*}
\mathcal{F}_- = \left\{ {\color{blue}\RS{i}}, {\color{blue}\RS{4i}}, {\color{blue}\RS{{4i}{r}}}_1,{\color{blue}\RS{{4i}{r}}}_2 \right\},
\end{align*}
where we listed the symbols in increasing homogeneity. The homogeneities are
\begin{align*}
\ -1-\kappa+\mu,\ -1-\kappa+\mu,\ -2-2\kappa+2\mu,\ -2-2\kappa+2\mu.
\end{align*}
\begin{remark}
The fact that $\xi$ is Gaussian implies that we have to check only the second moments, hence we do not have to show the moment bounds (\ref{KeyEstimate}) - (\ref{KeyEstimateDifferenceTime}) for some $p>2$ which is a major simplification in our analysis. This is due to Nelson's Estimate, see \cite{NELSON1973211}.
\end{remark}

To prove the needed estimates we will use the Wiener chaos expansions. Here, we only briefly exemplify the idea and refer to Chapter 1 in \cite{Nua_2006} as well as Section 10.1 in \cite{Hairer_2014} for more details.\\
To illustrate the idea, let us consider the symbol ${\color{blue}\RS{{4i}{r}}}_i$ which belongs to $\mathcal{F}_-$. Because our aim is to show the estimates (\ref{KeyEstimate}) and (\ref{KeyEstimateDifference}), we begin with the following calculation. For any spatial test function $\varphi$ we have
\begin{align*}
\left(\Pi_0^{\varepsilon,t}{\color{blue}\RS{{4i}{r}}}_i,\varphi_0^\lambda\right)
& = \int_{\T^2}(R_i K\ast\xi_\varepsilon)(t,x)( K\ast\xi_\varepsilon)(t,x)\varphi_0^\lambda(x)dx\\
& = \int_{M_t}\int_{M_t}\left(\int_{\T^2}\varphi_0^\lambda(x)R_i K(t-t_1,x-x_1) K(t-t_2,x-x_2)dx\right)\\
& \qquad\qquad\qquad\xi_\varepsilon(t_1,x_1)\xi_\varepsilon(t_2,x_2)d(t_1,x_1)d(t_2,x_2)\\
& \eqqcolon \hat I_2\left(\int_{\T^2}\varphi_0^\lambda(x)R_i^\perp K(t-\cdot,x-\cdot) K(t-\cdot,x-\cdot)dx\right),
\end{align*}
where $M_T=(0,t)\times\T^2$.
This shows that $\left(\Pi_0^{\varepsilon,t}{\color{blue}\RS{{4i}{r}}}_i,\varphi_0^\lambda\right)$ can be understood as an iterated stochastic integral against $\xi_\varepsilon$. In view of (\ref{KeyEstimate}), one would ideally have an It\^o-isometry for such integrals or rather an estimate of the form
\begin{align*}
\E\left[\abs{\hat I_2(f)}^2\right]\leq \norm f^2_{L^2}.
\end{align*}
Unfortunately, this is not the case. The reason why is that the pointwise product $\xi_\varepsilon(z_1)\xi_\varepsilon(z_2)$, where $z_i=(t_i,x_i)$ is simply not the correct object to consider. However, we can overcome this problem by replacing it with the \textit{Wick product} of these terms, which is defined as
\begin{align*}
\xi_\varepsilon(z_1)\diamond\xi_\varepsilon(z_2) & \coloneqq \xi_\varepsilon(z_1)\xi_\varepsilon(z_2)-\E[\xi_\varepsilon(z_1)]\xi_\varepsilon(z_2)-\xi_\varepsilon(z_1)\E[\xi_\varepsilon(z_2)]-\E[\xi_\varepsilon(z_1)\xi_\varepsilon(z_2)] \\
& = \xi_\varepsilon(z_1)\xi_\varepsilon(z_2)-\E[\xi_\varepsilon(z_1)\xi_\varepsilon(z_2)]
\end{align*}
due to $\E[\langle\xi,\varphi\rangle]=0$. Then, defining the second order Wiener integral with respect to $\xi_\varepsilon$
\begin{align*}
I_2(f)\coloneqq \int_{M_t}\int_{M_t}f(z_1,z_2)(\xi_\varepsilon(z_1)\diamond\xi_\varepsilon(z_2))dz_1dz_2,
\end{align*}
we indeed have the estimate
\begin{align}\label{EstimateI}
\E\left[\abs{I_2(f)}^2\right]\leq \norm f^2_{L^2}.
\end{align}
Moreover, it is possible to construct distributions $I_2(f)$, in a similar way, when the regularized noise $\xi_\varepsilon$ gets replaced by $\xi$, see \cite{Nua_2006}. Such distributions will play an important role in constructing the limiting model $(\Pi,\Gamma,\Sigma)$.

\begin{remark}
The Wick powers and also the operators $I_k$ which represent the $k$-th fold iterated stochastic integral are defined for any $k\in\N$, see \cite[Section 10.1]{Hairer_2014}. For $\mu>4/5$, we only need $I_2$ and the simpler operator $I_1$ that represents integration against white noise. 
\end{remark}

It remains to see how to transform $\left(\Pi_x^{\varepsilon,t}{\color{blue}\RS{{4i}{r}}}_i,\varphi_x^\lambda\right)$ so that we are in the correct setting of iterated integrals against the Wick product of $\xi_\varepsilon$ or $\xi$ in the sense of distributions.

\subsection{Renormalization}
With the theory of Wiener chaos expansions at hand, we can turn to the renormalization of the canonical model $\Pi^\varepsilon$ into a convergent model $\hat\Pi^\varepsilon$. 
In detail, the aim is to construct a new model $\hat\Pi^\varepsilon$ and to find functions $\left(\mathcal{W}^{(k;\varepsilon)}{\color{blue}\tau}\right)((t,y);\cdot)\in L^2(\R^{1+2})^{\otimes k}$ for any ${\color{blue}\tau}\in\mathcal{F}_-$ 
such that
\begin{align}\label{DecompPi}
\left(\hat\Pi_x^{\varepsilon,t}{\color{blue}\tau},\varphi_x^\lambda\right)  = I_2\left(\int_{\T^2}\varphi_x^\lambda(y)S_{x}^{\otimes k}\left(\mathcal{W}^{(k;\varepsilon)}{\color{blue}\tau}\right)((t,y);\cdot)dy\right),
\end{align}
where $k\in\{1,2\}$, depending on the number of appearances of ${\color{blue}\Xi}$ in ${\color{blue}\tau}$, and $S_{x}^{\otimes k}$ is the translation by $x$ acting on $L^2(\R^{1+2})^{\otimes k}$. We are in the translation invariant situation, meaning that 
$\xi_{\varepsilon}(x,t)=I_1(S_x^{\otimes 1}\rho_\varepsilon(\cdot,t-\cdot))$.\\
Recall that in case of $k=1$ no iterated stochastic integrals appear and $I_1(\cdot)$ corresponds to the integral against $\xi_\varepsilon$ or, in a distributional sense, against $\xi(\cdot)$ and the function, which gets evaluated by $\xi$, depends on $\varepsilon$. This means that for every symbol ${\color{blue}\tau}$ that contains only one appearance of ${\color{blue}\Xi}$ no renormalization is needed because $\left(\Pi_x^{\varepsilon,t}{\color{blue}\tau},\varphi_x^\lambda\right)$ has already in the desired form. The functions $\left(\mathcal{W}^{(1;\varepsilon)}{\color{blue}\tau}\right)((t,y);\cdot)$ for these symbols are given by
\begin{align*}
\left(\mathcal{W}^{(1;\varepsilon)}{\color{blue}\RS{i}}\right)((t,y);(t_1,y_1)) & \coloneqq  K_\varepsilon(t-t_1,y-y_1),\\
\left(\mathcal{W}^{(1;\varepsilon)}{\color{blue}\RS{4i}_i}\right)((t,y);(t_1,y_1)) & \coloneqq R_i^\perp K_\varepsilon(t-t_1,y-y_1).
\end{align*}
To see how to define $\hat\Pi^\varepsilon{\color{blue}\RS{{4i}{r}}}_i$, we proceed as follows. Setting $z=(t,x)$ and similar for $z_i,\bar z, z'_i$ the straightforward but lengthy calculation
\begin{align*}
&\int_{M_t}\int_{M_t}  R_i K((t-t_1,y-y_1)) K((t-t_2,y-y_2))\xi_\varepsilon(t_1,y_1)\diamond\xi_\varepsilon(t_2,y_2)d(\!t_1,y_1\!)d(\!t_2,y_2\!)\\
& \underset{\mathclap{(\ref{XiIsometry})}}{=} \quad \int_{M_t}\int_{M_t} R_i K(z-z_1) K(z-z_2)\xi_\varepsilon(z_1)\xi_\varepsilon(z_2)dz_1dz_2\\
& \quad - \int_{M_t}\int_{M_t} R_i K(z-z_1) K(z-z_2)\left(\int_{M_t}\rho_\varepsilon(z_1-\bar z)\rho_\varepsilon(z_2-\bar z)d\bar z\right)dz_1dz_2\\
& = (\Pi^{\varepsilon,t}_x{\color{blue}\RS{{4i}{r}}}_i)(z)
- \int_{M_t} \int_{M_t} R_i K(z-z_1)\rho_\varepsilon(z_1-\bar z)dz_1 \int_{M_t} K(z-z_2)\rho_\varepsilon(z_2-\bar z)dz_2 d\bar z\\
& = (\Pi^{\varepsilon,t}_x{\color{blue}\RS{{4i}{r}}}_i)(z)
- \int_{M_t} \int_{M_t} R_i K(z_1')\rho_\varepsilon(z-\bar z-z_1')dz_1' \int_{M_t} K(z_2')\rho_\varepsilon(z-\bar z-z_2')dz_2' d\bar z\\
& = (\Pi^{\varepsilon,t}_x{\color{blue}\RS{{4i}{r}}}_i)(z) - \int_{M_t}  R_i
 K_\varepsilon(\bar z)  K_\varepsilon(\bar z)d\bar z
\end{align*}
yields that
\begin{align}\label{DefRenormModel1}
\left(\hat\Pi^{\varepsilon,t}_x{\color{blue}\RS{{4i}{r}}}_i\right)(y) \coloneqq \left(\Pi^{\varepsilon,t}_x{\color{blue}\RS{{4i}{r}}}_i\right)(y) - C^i_\varepsilon,
\end{align}
where $C^i_\varepsilon\coloneqq (R_i K_\varepsilon, K_\varepsilon)$ is the object to consider, and we define
\begin{align}\label{SymbolRieszNabla}
\left(\mathcal{W}^{(2;\varepsilon)}{\color{blue}\RS{{4i}{r}}}_i\right)((t,y);(t_1,y_1);(t_2,y_2)) & \coloneqq R_i K_\varepsilon(t-t_1,y-y_1) K_\varepsilon(t-t_2,y-y_2)
\end{align}
and $\left(\mathcal{W}^{(1;\varepsilon)}{\color{blue}\RS{{4i}{r}}}_i\right)((t,y);(t_1,y_1);(t_2,y_2))=0$.
Let us mention that by symmetry we have $C_\varepsilon^1=C_\varepsilon^2$.

This shows that we can define the renormalized model $(\hat\Pi^\varepsilon,\hat\Gamma^\varepsilon,\hat\Sigma^\varepsilon)$ by the identity (\ref{DefRenormModel1}) and for the remaining symbols in $T$, $\hat\Pi^\varepsilon$ simply acts as $\Pi^\varepsilon$. Due to the algebraic constraint of a model, the maps $\hat\Gamma^\varepsilon$ and $\hat\Sigma^\varepsilon$ are uniquely characterized by $\hat\Pi^\varepsilon$.

\begin{remark}
Using the scaling properties of $K$ and $R_i K$ together with Lemma \ref{LemRegularKernels}, it is easy to see that the constants $C^i_\varepsilon$ diverge like $\varepsilon^{-2+2\mu}$ for $\mu<1$ resp. like $\log(\varepsilon)$ for $\mu=1$.
\end{remark}

\begin{remark}\label{RemarkRenormGroup}
The entire renormalization procedure we have done here is a special case of a general approach, which is presented in Section 8.3 of \cite{Hairer_2014}. The idea is to introduce a renormalization group $\mathfrak{R}$ that acts on the space of admissible models.
Such elements $M\in\mathfrak{R}$ are linear maps from the model space $\mathcal{T}$ to itself and an important feature is their action on elements of negative homogeneity. In our case, this resembles the fact that ${\color{blue}\RS{{4i}{r}}_i}$ has to be renormalized by subtracting a suitable constant. For each $M$, given any admissible model $\Pi$, one can build a new model $\Pi^M$ with the property
\begin{align*}
\Pi^M = \Pi M.
\end{align*}
The renormalization procedure that we have done here can then be viewed as such a transformation using the map $M^\varepsilon$ defined by
\begin{align*}
M^\varepsilon{\color{blue}\RS{{4i}{r}}}_i={\color{blue}\RS{{4i}{r}}}_i-C^i_\varepsilon{\color{blue}1}, 
\end{align*}
and $M^\varepsilon{\color{blue}\tau}={\color{blue}\tau}$ for the remaining symbols ${\color{blue}\tau}\in \mathcal{T}$\\
Moreover, the crucial property of the renormalization group is that it is an invariant mapping on the space of admissible models, meaning every model of the form $\Pi^M$, where $\Pi$ is admissible, is again admissible, see \cite{Hairer_2014}.
\end{remark}

For ${\color{blue}\hat\Theta}$ being the solution to the abstract fixed point problem (\ref{SQGfixedPointAbstract}) with model $\hat\Pi^\varepsilon$ the function $\hat{\mathcal{R}}_t^\varepsilon{\color{blue}\hat{\Theta}}_t$ solves the equation
\begin{align*}
\hat{\mathcal{R}}_t^\varepsilon{\color{blue}\hat\Theta}_t & = \partial_1 K\ast R_2\left(\hat\Pi_{\cdot}^\varepsilon{\color{blue}\hat\Theta}\right) - \partial_2 K\ast R_1\left(\hat\Pi_{\cdot}^\varepsilon{\color{blue}\hat\Theta}\right)
 +K\ast \xi_\varepsilon+K \ast_{\T^2} \theta_0\\
\end{align*}
Hence, the function $\hat{\mathcal{R}}_t^\varepsilon{\color{blue}\hat\Theta}_t$ is the mild solution of the SQG equation with regularized noise. Note, that the constant $C_\varepsilon^i$ do not appear in this equation.

\subsection{Convergence of Renormalized Models}
Given the preliminaries of the previous subsections, we are now in a convenient position to show the convergence of $\hat\Pi^\varepsilon$ to some $\Pi$  as $\varepsilon\rightarrow 0$. This then implies the existence of a solution to the renormalized and regularized version of the SQG equation as the regularization $\varepsilon$ gets removed.
Instead of showing the estimates (\ref{KeyEstimate}) - (\ref{KeyEstimateDifferenceTime}) directly, we work with the functions $ \left(\mathcal{W}^{(k;\varepsilon)}{\color{blue}\tau}\right)(\bar t,\bar x)$.
Since the following proposition is of general interest, we formulate it in $\R^d$ with the scaling $(s_0,1,\dots,1)$, our application will be the case $d=2$ and $s_0=2\mu$.

\begin{proposition}\label{PropositionEstimateW}
Let ${\color{blue}\tau}\in\mathcal{F}_-$ and $k\in\N$ be less or equal the number of appearances of ${\color{blue}\Xi}$ in ${\color{blue}\tau}$. Assume there exist $\nu,\nu'>0$ with $2(|\tau|+\nu)>-d$, functions $\left(\mathcal{W}^{(k)}{\color{blue}\tau}\right)(t,x)\in L^2(\R^{1+d})^{\otimes k}$ and a constant $C>0$ such that
\begin{align}\label{EstimateW}
\abs{\left\langle\left(\mathcal{W}^{(k)}{\color{blue}\tau}\right)(t,x),\left(\mathcal{W}^{(k)}{\color{blue}\tau}\right)( t,\bar x)\right\rangle}
 \leq C \sum_{\zeta}(|x|+|\bar x| )^{\zeta}
  |x-\bar x|^{2(\abs{{\color{blue}\tau}}+\nu)-\zeta}
\end{align}
and
\begin{multline}\label{EstimateWDiff}
\abs{\left\langle\left(\delta\mathcal{W}^{(k;\varepsilon)}{\color{blue}\tau}\right)(t,x),\left(\delta\mathcal{W}^{(k;\varepsilon)}{\color{blue}\tau}\right)(t,\bar x)\right\rangle} \\
\leq C \varepsilon^{\nu'}\sum_{\zeta} (|x|+|\bar x| )^{\zeta}
|x-\bar x|^{2(\abs{{\color{blue}\tau}}+\nu)-\zeta}
\end{multline}
hold, where the sums run over finitely many $\zeta\in[0,2(\abs{{\color{blue}\tau}}+\nu)+d)$, $\delta\mathcal{W}^{(k;\varepsilon)}\coloneqq\mathcal{W}^{(k;\varepsilon)}-\mathcal{W}^{(k)}$ and $\langle\cdot,\cdot\rangle$ denotes the scalar product in $L^2(\R^{1+d})^{\otimes k}$.\\
Then, the bounds (\ref{KeyEstimate}) and (\ref{KeyEstimateDifference}) are satisfied for ${\color{blue}\tau}$, where
\begin{align}\label{definitionPiTau}
	(\Pi_t^x{\color{blue}\tau })(\varphi):= \sum_{k} I_k\left(\int_{\T^2}\varphi(y)S_{x}^{\otimes k}\left(\mathcal{W}^{(k)}{\color{blue}\tau}\right)(t,y)dy\right).
\end{align}
If in addition
\begin{multline}\label{EstimateWTime}
	\abs{\left\langle\left(\mathcal{W}^{(k)}{\color{blue}\tau}\right)(t,x)-\left(\mathcal{W}^{(k)}{\color{blue}\tau}\right)(s,x),\left(\mathcal{W}^{(k)}{\color{blue}\tau}\right)( t,\bar x)-\left(\mathcal{W}^{(k)}{\color{blue}\tau}\right)(s,\bar x)\right\rangle} \\
	\leq C \sum_{\zeta}(\norm{t-s,x}_{\mathfrak{s}}+\norm{t-s,\bar x}_{\mathfrak{s}} )^{\zeta}
	|x-\bar x|^{2(\abs{{\color{blue}\tau}}+\nu-\delta)-\zeta} |t-s|^{2\delta/s_0}
\end{multline}
and
\begin{multline}\label{EstimateWDiffTime}
	\abs{\left\langle\left(\delta \mathcal{W}^{(k)}{\color{blue}\tau}\right)(t,x)-\left(\delta \mathcal{W}^{(k)}{\color{blue}\tau}\right)(s,x),\left(\delta \mathcal{W}^{(k)}{\color{blue}\tau}\right)( t,\bar x)-\left(\delta \mathcal{W}^{(k)}{\color{blue}\tau}\right)(s,\bar x)\right\rangle} \\
\leq C \varepsilon^{\nu'}\sum_{\zeta}(\norm{(t-s,x)}_{\mathfrak{s}}+\norm{(t-s,\bar x)}_{\mathfrak{s}} )^{\zeta}
|x-\bar x|^{2(\abs{{\color{blue}\tau}}+\nu-\delta)-\zeta} |t-s|^{2\delta/s_0}
\end{multline}
hold for some $\delta\in(0,\abs{{\color{blue}\tau}}+\nu+d/2)$ and finitely many $\zeta\in[0,2(\abs{{\color{blue}\tau}}+\nu+\delta)+d)$, then the bounds (\ref{KeyEstimateTime}) and (\ref{KeyEstimateDifferenceTime}) are also satisfied.
\end{proposition}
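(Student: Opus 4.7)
The plan is to follow the standard Wiener-chaos / scaling argument for moment bounds on models, adapted to the inhomogeneous setting. The starting point is the definition \eqref{definitionPiTau} which already expresses $(\Pi^{t}_x{\color{blue}\tau})(\varphi^\lambda_x)$ as a sum of iterated Wiener integrals $I_k$. First I would apply the Wiener-It\^o isometry together with the orthogonality of Wiener chaoses of different orders to obtain
\begin{align*}
\E\bigl[|(\Pi^{t}_x{\color{blue}\tau})(\varphi^\lambda_x)|^2\bigr]=\sum_k k!\,\|F_k\|^2_{L^2(\R^{(1+d)k})},
\qquad F_k(\cdot):=\int \varphi^\lambda_x(y)\,S_{x}^{\otimes k}(\mathcal{W}^{(k)}{\color{blue}\tau})(t,y)(\cdot)\,dy.
\end{align*}
Expanding $\|F_k\|^2$ by Fubini yields a double spatial integral against the pairing $\langle S_{x}^{\otimes k}\mathcal{W}^{(k)}{\color{blue}\tau}(t,y), S_{x}^{\otimes k}\mathcal{W}^{(k)}{\color{blue}\tau}(t,\bar y)\rangle$. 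Since the $L^2$-inner product is translation invariant, $S_{x}^{\otimes k}$ drops out, reducing the problem to controlling a weighted double integral of the unshifted covariance.

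The heart of the proof is then the scaling step: insert the assumed bound \eqref{EstimateW}, perform the change of variables $y=x+\lambda u$, $\bar y=x+\lambda \bar u$, so that $|y-\bar y|=\lambda|u-\bar u|$ and, measured relative to the base point $x$, the weight $(|y-x|+|\bar y-x|)^{\zeta}$ scales like $\lambda^{\zeta}(|u|+|\bar u|)^\zeta$. Each $\zeta$-term in the assumption therefore contributes a factor
\begin{align*}
\lambda^{2(|{\color{blue}\tau}|+\nu)-\zeta}\cdot \lambda^{\zeta}=\lambda^{2(|{\color{blue}\tau}|+\nu)},
\end{align*}
multiplied by the $(u,\bar u)$-integral $\int\!\!\int \varphi(u)\varphi(\bar u)\,(|u|+|\bar u|)^\zeta |u-\bar u|^{2(|{\color{blue}\tau}|+\nu)-\zeta}du\,d\bar u$. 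The latter is finite precisely because $\varphi\in\mathcal{B}_r$ is supported in $B_1(0)$ and the restriction $\zeta<2(|{\color{blue}\tau}|+\nu)+d$ built into the hypothesis makes the singularity at $u=\bar u$ locally integrable (here we also use $2(|{\color{blue}\tau}|+\nu)>-d$). Summing over the finitely many $\zeta$ and the finitely many $k$ yields \eqref{KeyEstimate}.

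For \eqref{KeyEstimateDifference} I would repeat verbatim this argument with $\mathcal{W}^{(k)}{\color{blue}\tau}$ replaced by $\delta\mathcal{W}^{(k;\varepsilon)}{\color{blue}\tau}$, invoking \eqref{EstimateWDiff} in place of \eqref{EstimateW}; the only change is the appearance of the overall factor $\varepsilon^{\nu'}$. The time-regularity estimates \eqref{KeyEstimateTime} and \eqref{KeyEstimateDifferenceTime} are handled in the same way applied to $F_k^{(t)}-F_k^{(s)}$ using \eqref{EstimateWTime} and \eqref{EstimateWDiffTime}: the only new ingredient is an extra factor $|t-s|^{2\delta/s_0}$ pulled out of the covariance, while the scaling works out as before after noticing that, under the change of variables, $\|(t-s,y-x)\|_{\mathfrak{s}}^{\zeta}$ still contributes $\lambda^{\zeta}$ in the worst case (when $|t-s|^{1/s_0}\lesssim \lambda$; the case $|t-s|^{1/s_0}\geq \lambda$ only improves the bound). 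This gives the exponent $2(|{\color{blue}\tau}|+\nu-\delta)$ in $\lambda$ together with $|t-s|^{2\delta/s_0}$.

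The main obstacle I anticipate is the bookkeeping of the $\zeta$-sum and specifically making precise that the weight $(|x|+|\bar x|)^\zeta$ in \eqref{EstimateW}, after the translation built into $S_{x}^{\otimes k}$ is absorbed, effectively measures the distance of the integration variables to the base point $x$ and therefore scales as $\lambda^{\zeta}$. A secondary technical point is to ensure that the limiting object $\mathcal{W}^{(k)}{\color{blue}\tau}$ really lies in $L^{2}(\R^{1+d})^{\otimes k}$ so that the It\^o isometry is available for the limit model (rather than only for the regularised kernels), which follows from \eqref{EstimateW} together with $2(|{\color{blue}\tau}|+\nu)>-d$; with these two points handled, the argument is otherwise a direct adaptation of \cite[Prop.~10.11]{Hairer_2014}.
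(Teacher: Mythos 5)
Your argument follows the same route as the paper's proof: apply the It\^o isometry for $I_k$, use Fubini and translation invariance to reduce to a double integral of the covariance pairing, insert the assumed bound, and rescale. For \eqref{KeyEstimate} and \eqref{KeyEstimateDifference} this is correct.

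There is, however, an imprecision in the time-regularity step. You claim that the estimates \eqref{KeyEstimateTime} and \eqref{KeyEstimateDifferenceTime} follow ``in the same way $\dots$ using \eqref{EstimateWTime} and \eqref{EstimateWDiffTime}'', and that $\|(t-s,y-x)\|_{\mathfrak{s}}^{\zeta}$ contributes $\lambda^{\zeta}$ ``in the worst case (when $|t-s|^{1/s_0}\lesssim\lambda$; the case $|t-s|^{1/s_0}\geq\lambda$ only improves the bound).'' The last parenthetical is where the gap lies: if you continue to apply \eqref{EstimateWTime} when $|t-s|^{1/s_0}\geq\lambda$, then on the support of $\varphi^\lambda_0$ one has $\|(t-s,y-x)\|_{\mathfrak{s}}\leq|t-s|^{1/s_0}$, so the weight contributes a factor $|t-s|^{\zeta/s_0}$ rather than $\lambda^{\zeta}$, and the resulting estimate overshoots the target $\lambda^{2(|\tau|+\nu-\delta)}|t-s|^{2\delta/s_0}$ by the factor $\bigl(|t-s|^{1/s_0}/\lambda\bigr)^{\zeta}\geq1$. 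The correct argument — and what the paper's proof does — is a genuine case distinction: for $|t-s|^{1/s_0}\geq\lambda$ you do not invoke \eqref{EstimateWTime} at all, but apply \eqref{KeyEstimate} to $\Pi^t_x\tau$ and $\Pi^s_x\tau$ separately, obtaining $\E[\cdot]\lesssim\lambda^{2(|\tau|+\nu)}$, which implies \eqref{KeyEstimateTime} because $\lambda^{2(|\tau|+\nu)}=\lambda^{2(|\tau|+\nu-\delta)}\lambda^{2\delta}\leq\lambda^{2(|\tau|+\nu-\delta)}|t-s|^{2\delta/s_0}$ in that regime. Only for $|t-s|^{1/s_0}<\lambda$ do you invoke \eqref{EstimateWTime}, where indeed $\|(t-s,y-x)\|_{\mathfrak{s}}\leq\lambda$ on the relevant support and the rescaling yields the claimed power of $\lambda$. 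With this case split made explicit (and the same for the $\varepsilon$-difference bound), your proof coincides with the paper's.
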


This result for homogeneous models is Proposition 10.11 in \cite{Hairer_2014}. For the  adaptations to the case of inhomogeneous models, see Proposition 6.2 and Remark 6.3 in \cite{hairer2017discretisations}. Since we have chosen and slightly different formulation than in those papers, let us give a short proof.

\begin{proof}
Let $\lambda \in(0,1]$. By \cite{Nua_2006} we have $\E[I_k(f)^2]\leq \|f\|_{ L^2(\R^{1+d})^{\otimes k}}^2$, hence
\begin{align*}
\E & \left|  I_k\left(\int_{\T^2}\varphi_x^\lambda (y)S_{x}^{\otimes k}\left(\mathcal{W}^{(k)}{\color{blue}\tau}\right)(t,y) dy \right)\right|^2\\
& \leq \left\| \int_{\R^d}  \varphi_0^\lambda (y) \left(\mathcal{W}^{(k)}{\color{blue}\tau}\right)( t, y)  dy \right\|^2_{ L^2(\R^{1+d})^{\otimes k}}\\
& = \int_{\R^d}\int_{\R^d}   \varphi_0^\lambda (y)  \varphi_0^\lambda (\bar y) \left\langle\left(\mathcal{W}^{(k)}{\color{blue}\tau}\right)(t,y),\left(\mathcal{W}^{(k)}{\color{blue}\tau}\right)( t,\bar y)\right\rangle  dy  d\bar y \\
& \leq c \lambda^{-2d} \sum_{\zeta}\int_{|y|\leq \lambda}\int_{|\bar y|\leq \lambda }(|y|+|\bar y| )^{\zeta} |y-\bar y|^{2(\abs{{\color{blue}\tau}}+\nu)-\zeta} dy  d\bar y \\
& \leq c \lambda^{2(\abs{{\color{blue}\tau}}+\nu)},
\end{align*}
This shows \eqref{KeyEstimate}, \eqref{KeyEstimateDifference} follows in the same way. For $|t-s|^{1/s_0}\geq \lambda$ we also have \eqref{KeyEstimateTime} and \eqref{KeyEstimateDifferenceTime} by the above calculation, but to show it for $|t-s|^{1/s_0}< \lambda$ we need the additional assumptions. Proceeding in the same way as before we get
\begin{align*}
	\E & \left|  I_k\left(\int_{\T^2}\varphi_x^\lambda (y)S_{x}^{\otimes k}\left(\left(\mathcal{W}^{(k)}{\color{blue}\tau}\right)(t,y)-\left(\mathcal{W}^{(k)}{\color{blue}\tau}\right)(s,y)\right) dy \right)\right|^2\\
	& \leq c \lambda^{-2d} 
	\sum_{\zeta}\int_{|y|\leq \lambda}
	\int_{|\bar y|\leq \lambda }
	(\norm{(t-s,y)}_{\mathfrak{s}} +\norm{(t-s,\bar y}_{\mathfrak{s}} )^{\zeta}\\
	&\qquad\qquad\qquad	 |y-\bar y|^{2(\abs{{\color{blue}\tau}}+\nu-\delta)-\zeta} 
		 |t-s|^{2\delta/s_0} dy  d\bar y \\
	& \leq c \lambda^{2(\abs{{\color{blue}\tau}}-\delta+\nu)}|t-s|^{2\delta/s_0}.
\end{align*}
\end{proof}

Note that the condition $2(|\tau|+\nu)>-d$ is satisfied for all basis symbols used to construct $\mathscr{T}_{SQG}$ if we choose $\kappa$ small enough, since $2|\tau|+2>-2+4\mu-4\kappa$

In many applications, including our case of SQG with noise homogeneous in time and space, we have $\left\langle\left(\mathcal{W}^{(k)}{\color{blue}\tau}\right)(\bar t,x),\left(\mathcal{W}^{(k)}{\color{blue}\tau}\right)( t,\bar x)\right\rangle=H(t-\bar t,x-\bar x)$. In this case, we only need $H$ to be a kernel of order $2(\abs{{\color{blue}\tau}}+\nu) $  to have \eqref{EstimateW} and \eqref{EstimateWTime}.

\begin{corollary}\label{corollaryEstimateW}
	In the situation of Proposition \ref{PropositionEstimateW}
	assume that  $\tilde H(t,\bar t, x,\bar x)
	:=\left\langle\left(\mathcal{W}^{(k)}{\color{blue}\tau}\right)(\bar t,x),\left(\mathcal{W}^{(k)}{\color{blue}\tau}\right)( t,\bar x)\right\rangle$ be a is a kernel of order $2(\abs{{\color{blue}\tau}}+\nu)\leq 0$ for some $\nu>0$ and only depend on $(t-\bar t,x-\bar x)$, i.e. $\tilde H(t,\bar t, x,\bar x)=H(t-\bar t, x-\bar x)$ for some kernel $H$ of the same order. Then the bounds \eqref{EstimateW} and \eqref{EstimateWTime} hold for $\delta \in [0,1/2]$.
\end{corollary}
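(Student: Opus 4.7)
The plan is to derive both required bounds directly from the kernel estimates on $H$, in each case keeping only a single term with $\zeta=0$ in the finite sum.

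For \eqref{EstimateW}: setting $z:=x-\bar x$, the translation invariance hypothesis gives $\tilde H(t,t,x,\bar x)=H(0,z)$. Since $H$ is a kernel of order $2(|{\color{blue}\tau}|+\nu)\le 0$, Definition \ref{Def:Orderofkernel} yields $|H(0,z)|\le C|z|^{2(|{\color{blue}\tau}|+\nu)}$, which is exactly \eqref{EstimateW} with $\zeta=0$ and a one-term sum.

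For \eqref{EstimateWTime}: expanding the inner product and using translation invariance, the left-hand side equals $|A|$ where
\begin{equation*}
A:=2H(0,z)-H(w,z)-H(-w,z),\qquad w:=t-s.
\end{equation*}
I combine two independent estimates for $A$. The kernel bound applied termwise gives the trivial estimate $|A|\le 4C\,\|(w,z)\|_{\mathfrak s}^{2(|{\color{blue}\tau}|+\nu)}$. Writing instead $A=(H(0,z)-H(w,z))+(H(0,z)-H(-w,z))$, the fundamental theorem of calculus and the fact that $\partial_t H$ is a kernel of order $2(|{\color{blue}\tau}|+\nu)-s_0$ give, for $z\neq 0$,
\begin{equation*}
|A|\le 2|w|\sup_{|r|\le|w|}|\partial_t H(r,z)|\le C|w|\,|z|^{2(|{\color{blue}\tau}|+\nu)-s_0},
\end{equation*}
where the non-positive exponent makes the supremum attained at $r=0$, so that $\|(r,z)\|_{\mathfrak s}=|z|$.

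To conclude, I split into two regimes. If $|z|\ge|w|^{1/s_0}$ I use the derivative estimate and absorb the factor $(|w|^{1/s_0}/|z|)^{s_0-2\delta}\le 1$, which is valid for $\delta\in[0,s_0/2]$. If $|z|<|w|^{1/s_0}$ I use the trivial estimate and absorb $(|w|^{1/s_0}/|z|)^{2(|{\color{blue}\tau}|+\nu-\delta)}\le 1$, which holds because the exponent is non-positive (both $|{\color{blue}\tau}|+\nu\le 0$ and $\delta\ge 0$). In both regimes this produces
\begin{equation*}
|A|\le C|w|^{2\delta/s_0}|z|^{2(|{\color{blue}\tau}|+\nu-\delta)}
\end{equation*}
for every $\delta\in[0,s_0/2]$; since $s_0\ge 1$ in the applications, this interval contains $[0,1/2]$, yielding \eqref{EstimateWTime} with $\zeta=0$. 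The main obstacle is only the elementary bookkeeping of exponents across the two regimes and checking that the cross-over factor lies in $[0,1]$ in each case; no further analytic input is needed.
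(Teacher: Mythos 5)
Your proof is correct in substance and reaches the same conclusion, but it takes a genuinely different and more elementary route than the paper. The paper simply cites Lemma~10.18 of Hairer's regularity structures paper, a ready-made H\"older interpolation bound for kernels of a given order: for $\alpha\in[0,1]$ and $\zeta:=2(|\tau|+\nu)$,
\[
|H(y,r)-H(\bar y,\bar r)|\le C\|(y-\bar y,r-\bar r)\|_{\mathfrak s}^{\alpha}\bigl(\|(y,r)\|_{\mathfrak s}^{\zeta-\alpha}+\|(\bar y,\bar r)\|_{\mathfrak s}^{\zeta-\alpha}\bigr),
\]
applied to the two differences $H(0,z)-H(\pm w,z)$, with $\delta=\alpha/2\in[0,1/2]$. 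You instead re-derive the needed special case of this interpolation from scratch, via the case split $|z|\ge|w|^{1/s_0}$ versus $|z|<|w|^{1/s_0}$ and the fundamental theorem of calculus in the time variable. The cost is a bit more exponent bookkeeping; the gain is self-containedness, plus you get the slightly larger range $\delta\in[0,s_0/2]$, which exceeds $[0,1/2]$ for the SQG scaling $s_0=2\mu>4/3$.

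One small slip in the small-$|z|$ regime should be fixed. Your claimed ``trivial estimate'' $|A|\le 4C\|(w,z)\|_{\mathfrak s}^{2(|\tau|+\nu)}$ is not what the kernel bound gives termwise: the term $|H(0,z)|$ is bounded by $C|z|^{2(|\tau|+\nu)}$, and since the exponent is non-positive and $|z|\le\|(w,z)\|_{\mathfrak s}$, one in fact has $|z|^{2(|\tau|+\nu)}\ge\|(w,z)\|_{\mathfrak s}^{2(|\tau|+\nu)}$ — the inequality goes the wrong way to infer your stated bound. The correct trivial estimate is $|A|\le 4C|z|^{2(|\tau|+\nu)}$, and this still suffices: write $|z|^{2(|\tau|+\nu)}=|z|^{2\delta}\,|z|^{2(|\tau|+\nu-\delta)}$ and absorb $|z|^{2\delta}\le|w|^{2\delta/s_0}$, which holds in the regime $|z|<|w|^{1/s_0}$; the cross-over factor is $(|z|/|w|^{1/s_0})^{2\delta}\le 1$ rather than the one you wrote. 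With this one-line fix, both regimes yield \eqref{EstimateWTime} with the single term $\zeta=0$, and the rest of your argument is sound.
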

\begin{proof}
The estimate \eqref{EstimateW} follows from
\begin{align*}
	\abs{\left\langle\left(\mathcal{W}^{(k)}{\color{blue}\tau}\right)(t,x),\left(\mathcal{W}^{(k)}{\color{blue}\tau}\right)( t,\bar x)\right\rangle}
	=H(0,x-\bar x)  
	\leq C 
	|x-\bar x|^{2(\abs{{\color{blue}\tau}}+\nu)}.
\end{align*}
For \eqref{EstimateWTime} we use
\begin{align*}
&\left\langle\left(\mathcal{W}^{(k)}{\color{blue}\tau}\right)(t,x)-\left(\mathcal{W}^{(k)}{\color{blue}\tau}\right)(s,x),\left(\mathcal{W}^{(k)}{\color{blue}\tau}\right)( t,\bar x)-\left(\mathcal{W}^{(k)}{\color{blue}\tau}\right)(s,\bar x)\right\rangle \\
&\qquad = H(0,x-\bar x)-H(s-t,x-\bar x)-H(t-s,x-\bar x) +H(0,x-\bar x).
\end{align*}
By \cite{Hairer_2014}, Lemma 10.18 we have for any $\alpha\in[0,1]$
\begin{multline*}
|H(y,r)-H(\bar y,\bar r)|\leq C \norm{(y-\bar y,r-\bar r)}_{\mathfrak{s}}^{\alpha} (\norm{(y,r)}_{\mathfrak{s}}^{2(\abs{{\color{blue}\tau}}+\nu)-\alpha}+\norm{(\bar y,\bar r)}_{\mathfrak{s}}^{2(\abs{{\color{blue}\tau}}+\nu)-\alpha} )
\end{multline*}
for any $(\bar y,\bar r),(\bar y,\bar r)$. This yields
\begin{align*}
	&\left|\left\langle\left(\mathcal{W}^{(k)}{\color{blue}\tau}\right)(t,x)-\left(\mathcal{W}^{(k)}{\color{blue}\tau}\right)(s,x),\left(\mathcal{W}^{(k)}{\color{blue}\tau}\right)( t,\bar x)-\left(\mathcal{W}^{(k)}{\color{blue}\tau}\right)(s,\bar x)\right\rangle\right| \\
	&\qquad \leq  C \norm{(t-s,0)}_{\mathfrak{s}}^{\alpha} \norm{(t-s,x-\bar x)}_{\mathfrak{s}}^{2(\abs{{\color{blue}\tau}}+\nu)-\alpha}\\
	&\qquad \leq C |t-s|^{\alpha/s_0} |x-\bar x|^{2(\abs{{\color{blue}\tau}}+\nu)-\alpha}
\end{align*}
for $\delta =\alpha/2 \in[0,1/2]$, where we used that $2(\abs{{\color{blue}\tau}}+\nu)-\alpha\leq 0$.
\end{proof}

Finally, we have everything set up to proof the convergence of the renormalised models.

\begin{theorem}
Given the regularity structure $\mathscr{T}_{SQG}$ for the SQG equation. Let $\rho$ be a symmetric mollifier, set $\xi_\varepsilon=\xi\ast\rho_\varepsilon$ and let $(\Pi^\varepsilon,\Gamma^\varepsilon,\Sigma^\varepsilon)$ be the associated canonical model. Further, let $(\hat\Pi^\varepsilon,\hat\Gamma^\varepsilon,\hat\Sigma^\varepsilon)$ be the renormalized model constructed above.\\
Then, there exists a unique admissible model $(\Pi,\Gamma,\Sigma)$ independent of the choice of mollifier $\rho$ such that $(\hat\Pi^\varepsilon,\hat\Gamma^\varepsilon,\hat\Sigma^\varepsilon)$ converges in probability to $(\Pi,\Gamma,\Sigma)$. Furthermore, $(\Pi,\Gamma,\Sigma)$ has time regularity $\delta>0$ for any $\delta \in (0,2 \mu -1)$.
\end{theorem}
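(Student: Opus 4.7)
The plan is to verify the second-moment bounds (\ref{KeyEstimate})--(\ref{KeyEstimateDifferenceTime}) for every symbol of negative homogeneity $\tau\in\mathcal{F}_-$ via Proposition \ref{PropositionEstimateW} and Corollary \ref{corollaryEstimateW}; these then feed into the general machinery cited just before the statement to produce a limiting inhomogeneous model $(\Pi,\Gamma,\Sigma)$ to which $(\hat\Pi^\varepsilon,\hat\Gamma^\varepsilon,\hat\Sigma^\varepsilon)$ converges in probability. The candidate limit is forced by the chaos representation (\ref{definitionPiTau}) with $\rho$-independent functions $\mathcal{W}^{(k)}\tau$ obtained from the formulas for $\mathcal{W}^{(k;\varepsilon)}\tau$ by replacing every occurrence of $K_\varepsilon$ and $R_iK_\varepsilon$ by $K$ and $R_iK$; this yields both uniqueness and $\rho$-independence, since an admissible inhomogeneous model is determined by its action on $\mathcal{F}_-$ together with the algebraic constraints (\ref{RealizingModelIdentities})--(\ref{RealizingModelIdentitiesR}).

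For the single-noise symbols $\RS{i}$ and $\RS{4i}$, no renormalisation constant is needed and I would simply set $\mathcal{W}^{(1)}\RS{i}((t,y);(t_1,y_1)) = K(t-t_1, y-y_1)$ and the analogous expression with $R_iK$. The relevant inner product is the space-time convolution of $K$ (respectively $R_iK$) with its reflection, evaluated at $(0, x-\bar x)$; by Lemma \ref{LemConvolutionKernels} together with the observation at the end of Section \ref{s:nonsmoothingkernels} that the spatial application of $R_i$ preserves the order of $K$, this is a kernel of order $-d+s_0 = 2\mu-2$, which matches $2(|\tau|+\nu)$ for $\nu$ slightly larger than $\kappa$. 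The $\varepsilon$-rate in (\ref{EstimateWDiff}) follows from (\ref{EstimateRegularKernelDifference}) applied factor-wise.

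For the two-noise symbols $\tau = \RS{{4i}{r}}_i$, the renormalisation (\ref{DefRenormModel1}) is precisely what is needed to identify $\hat\Pi^{\varepsilon,t}_x\tau$ with $I_2(\mathcal{W}^{(2;\varepsilon)}\tau(t,x;\cdot,\cdot))$ for $\mathcal{W}^{(2;\varepsilon)}\tau$ as in (\ref{SymbolRieszNabla}). The tensor-product structure of $\mathcal{W}^{(2)}$ produces by Fubini
\begin{equation*}
\bigl\langle \mathcal{W}^{(2)}\tau(t,x;\cdot,\cdot),\, \mathcal{W}^{(2)}\tau(t,\bar x;\cdot,\cdot) \bigr\rangle \;=\; H_{R_iK}(0, x-\bar x)\, H_K(0, x-\bar x),
\end{equation*}
where $H_G := G \ast \check G$ for $G\in\{K, R_iK\}$. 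Each factor is a kernel of order $-d+s_0 = 2\mu-2$ by Lemma \ref{LemConvolutionKernels}, so the product has order $4\mu - 4$, matching $2(|\tau|+\nu) = 4\mu - 4 - 4\kappa + 2\nu$ for $\nu \leq 2\kappa$. The difference bound (\ref{EstimateWDiff}) follows by splitting $\mathcal{W}^{(2;\varepsilon)}-\mathcal{W}^{(2)} = (K_\varepsilon-K) R_iK_\varepsilon + K(R_iK_\varepsilon-R_iK)$ and applying (\ref{EstimateRegularKernelDifference}) in each factor.

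The hard part will be the time regularity, since the estimate that Corollary \ref{corollaryEstimateW} extracts from translation invariance is limited to $\delta \leq 1/2$, which falls short of the claimed threshold $2\mu-1$ once $\mu > 3/4$. To cover the full range $\delta < 2\mu - 1$ I would bypass the Corollary and estimate $K(t,\cdot)-K(s,\cdot) = \int_s^t \partial_r K(r,\cdot)\,dr$ directly, using that $\partial_r K$ is a space-time kernel of order $-d-s_0$ and that the parabolic scaling $K(t,x) = t^{-d/s_0} K(1, t^{-1/s_0}x)$ permits an interpolation producing $|t-s|^{\delta/s_0}$-H\"older bounds for any $\delta < s_0 - 1 = 2\mu-1$. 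Plugging these refined time-difference estimates into the Fubini factorisation propagates the same threshold to the two-noise symbols, and the $\varepsilon$-rate variant (\ref{EstimateWDiffTime}) then follows by the same telescoping.
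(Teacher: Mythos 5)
Your overall strategy matches the paper's: the same Wiener-chaos representation with $\mathcal{W}^{(k)}\tau$ obtained from $\mathcal{W}^{(k;\varepsilon)}\tau$ by dropping the mollification, the same Fubini factorisation of the two-noise inner product into $(R_iK\ast R_iK)\cdot(K\ast K)$, the same kernel-order bookkeeping via Lemma~\ref{LemConvolutionKernels}, and the same algebraic split $ab-cd=(a-c)(b-d)+(a-c)d+c(b-d)$ combined with Lemma~\ref{LemRegularKernels} for the $\varepsilon$-rates. The small computational slip ``$\nu$ slightly larger than $\kappa$'' for the one-noise symbols should be ``$\nu\leq\kappa$'' (the kernel order must dominate $2(|\tau|+\nu)$), but that does not affect the scheme.

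The most valuable part of your proposal is the observation about the time regularity. You are right that Corollary~\ref{corollaryEstimateW}, whose proof rests on the $\alpha\in[0,1]$ version of the H\"older bound for kernels (Lemma~10.18 of \cite{Hairer_2014}), only yields $\delta\in[0,1/2]$. The paper's proof of the model-convergence theorem obtains \eqref{EstimateWTime} by citing exactly this corollary, while the theorem claims time regularity for any $\delta\in(0,2\mu-1)$; since $2\mu-1>1/2$ once $\mu>3/4$, the corollary route falls short of the stated range. This matters for the main Theorem~\ref{theorem:maintheorem}: its H\"older-in-time exponent $\bar\delta<3\mu-2-\kappa$ must also satisfy $\bar\delta\leq\delta$, and $3\mu-2$ exceeds $1/2$ once $\mu>5/6$, so for that regime one genuinely needs a sharper time-regularity estimate than the corollary provides. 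Your proposed remedy --- estimating $K(t,\cdot)-K(s,\cdot)=\int_s^t\partial_r K(r,\cdot)\,dr$ directly, using that $\partial_t K$ is a kernel of order $-2-s_0$ and exploiting the parabolic scaling of the fractional heat kernel, then propagating this through the Fubini factorisation --- is the natural way to push $\delta$ up to the bound $|\tau|+\nu+d/2=2\mu-1$ allowed by Proposition~\ref{PropositionEstimateW}, and it is a more robust argument than the one the paper actually writes down. Make sure, when carrying it out, that you also treat the second-difference structure $2H(0,\cdot)-H(t-s,\cdot)-H(s-t,\cdot)$ symmetrically, since in that form the even symmetry of $H$ can be used to avoid losing a factor near the diagonal.
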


\begin{proof}
It turns out that we can define functions $\mathcal{W}^{(k)}{\color{blue}\tau}$ in the same way we decomposed $\Pi^{\varepsilon,t}_{x}{\color{blue}\tau}$ in (\ref{DecompPi}) as we will see below. 
Once this is done, all that is left to show are the bounds (\ref{EstimateW})-(\ref{EstimateWDiff}) and (\ref{EstimateWTime})-(\ref{EstimateWDiffTime}) for $\mathcal{W}^{(k;\varepsilon)}$ and $\mathcal{W}^{(k)}$. 

From the symbols in $\mathcal{F}_-$ only ${\color{blue}\RS{{4i}{r}}}_i$ take some effort, since these are the symbols with $k=2$. Both symbols have homogeneity $|{\color{blue}\RS{{4i}{r}}}_i|=-2+2 \mu-2\kappa$. Due to their similar structure we drop the index $i$ for simplicity. Once we have done this, it is clear that the remaining symbols with $k=1$ can be dealt with in the same fashion with even less effort because the corresponding functions $\left(\mathcal{W}^{(k;\varepsilon)}{\color{blue}\tau}\right)(t,x)$ and $\left(\mathcal{W}^{(k)}{\color{blue}\tau}\right)(t,x)$ are easier to work with. 
We define
\begin{align*}
	\left(\mathcal{W}^{(2)}{\color{blue}\RS{{4i}{r}}}\right)(z;z_1;z_2) & \coloneqq R_i K(z-z_1) K(z-z_2)
\end{align*}
and $\left(\mathcal{W}^{(1)}{\color{blue}\RS{{4i}{r}}}\right)(z;z_1;z_2)  \coloneqq 0$,
where $z=(t,x),$ and $z_i,\overline{z}$ are defined similarly. Hence,
\begin{align*}
& \left\langle\left(\mathcal{W}^{(2)}{\color{blue}\RS{{4i}{r}}}\right)(z),\left(\mathcal{W}^{(2)}{\color{blue}\RS{{4i}{r}}}\right)(\bar z)\right\rangle\\
& \quad = \int_{M_t}\int_{M_t} R_i K(z-z_1) K(z-z_2) R_i K(\bar z-z_1) K(\bar z-z_2) dz_1dz_2\\
& \quad = \int_{M_t}R_i K(z-z_1)R_i K(\bar z-z_1)dz_1 \int_{M_t} K(z-z_2) K(\bar z-z_2)dz_2\\
& \quad = \int_{M_t}R_i K((z-\bar z)+z_1)R_i K(z_1)dz_1 \int_{M_t} K((z-\bar z)+z_2) K(z_2)dz_2\\
& \quad = (R_i K\ast R_i K)(\bar z-z)(- K\ast K)(\bar z-z),
\end{align*}
where again $M_t=(0,t)\times \T^2$. To get to the last line, we have used that the fractional heat kernel $K$ is even and that we have chosen the mollifier $\rho$ to be even as well. At this point, we note that the decomposition of the heat kernel in the sense of Remark \ref{RemarkDecompKernel} yields two new kernels both being even, This is due to \cite[Lemma 5.24]{Hairer_2014}.\\
Since $ K$ and $R_i K$ are kernels of order $-2$, applying Lemma \ref{LemConvolutionKernels} yields that $R_i K_\varepsilon\ast R_i K$ is a kernel of order $-2+2 \mu$ and  $(R_i K\ast R_i K)(- K\ast K)$ is a kernel of order $-4+4 \mu=2|{\color{blue}\RS{{4i}{r}}}|+4\kappa$ only depending on $(t-\bar t,x-\bar x)$.
Choosing $\nu=2\kappa$ we get (\ref{EstimateW}) and (\ref{EstimateWTime}) by Corollary \ref{corollaryEstimateW}.\\
Next, we show the validity of the estimate (\ref{EstimateWDiff}) and (\ref{EstimateWDiffTime}).  We get
\begin{align*}
& \left\langle\left(\delta\mathcal{W}^{(2)}{\color{blue}\RS{{4i}{r}}}\right)(z),\left(\delta\mathcal{W}^{(2)}{\color{blue}\RS{{4i}{r}}}\right)(\bar z)\right\rangle\\
& \quad = \int_{M_t}\int_{M_t}  \big(R_i K(z-z_1) K(z-z_2)-R_i K(z-z_1)\nabla K(z-z_2)\big)\\
& \qquad\qquad\qquad \big(R_i K(\bar z-z_1) K(\bar z-z_2)-R_i K(\bar z-z_1) K(\bar z-z_2)\big) dz_1dz_2.
\end{align*}
In view of (\ref{EstimateRegularKernelDifference}) in Lemma \ref{LemRegularKernels}, we aim to estimate the differences of $R_i K_\varepsilon-R_i K$ and $K_\varepsilon - K$. Hence, using the algebraic identity $ab-cd=(a-c)(b-d)+(a-c)d+c(b-d)$, we write
\begin{align*}
& \left\langle\left(\delta\mathcal{W}^{(2;\varepsilon)}{\color{blue}\RS{{4i}{r}}}\right)(z),\left(\delta\mathcal{W}^{(2;\varepsilon)}{\color{blue}\RS{{4i}{r}}}\right)(\bar z)\right\rangle\\
& \quad = \bigg[ \int_{M_t}\big(R_i K_\varepsilon(z-z_1)-R_i K(z-z_1)\big)\big(R_i K_\varepsilon(\bar z-z_1)-R_i K(\bar z-z_1)\big)dz_1\\
& \qquad\quad \int_{M_t}\big( K_\varepsilon(z-z_2)- K(z-z_2)\big)\big( K_\varepsilon(\bar z-z_2)- K(\bar z-z_2)\big)dz_2 \bigg]\\
& \qquad\quad + \sum_{i=2}^9 I_i(z,\bar z)\\
& \quad\eqqcolon I_1(z,\bar z) + \sum_{i=2}^9 I_i(z,\bar z),
\end{align*}
where each $I_i(z,\bar z)$, $i=2,...,9$, is an integral over products of different combinations of the kernels $R_i K,\ R_i K_\varepsilon,\  K,\  K_\varepsilon$ and, most importantly, at least one factor is a difference of the form $R_i K_\varepsilon-R_i K$ or $ K_\varepsilon - K$.\\
Now, we can make use of estimate (\ref{EstimateRegularKernelDifference}) and obtain as above
\begin{align*}
\abs{I_1(z,\bar z)} & \leq C \bigg[\int_{M_t}\varepsilon^{\kappa/2}\norm{z-z_1}_\mathfrak{s}^{-2-\kappa/2}\varepsilon^{\kappa/2}\norm{\bar z-z_1}_\mathfrak{s}^{-2-\kappa/2}dz_1\bigg]^2\\
& \underset{\mathclap{Lem.\ref{LemConvolutionKernels}}}{\leq} \quad C \varepsilon^{2\kappa}\big[\norm{\bar z-z}_\mathfrak{s}^{-4+4 \mu -2\kappa}\big]\\
& = C \varepsilon^{\nu'}\norm{\bar z-z}_\mathfrak{s}^{2(|{\color{blue}\RS{{4i}{r}}}|+\kappa)},
\end{align*}
where $\nu'=2\kappa$. Using the same arguments, it is possible to estimate the remaining $I_i(z,\bar z)$ with the same upper bound and the possible exception of different values for $\nu'$ and $\nu$. The fact that one of the factors in $I_i(z,\bar z)$ contains at least one difference of a kernel and its regularized version yields the factor $\varepsilon^{\nu'}$ in the estimate. Since we have only finitely many bounds of the above form, we conclude that the estimate (\ref{EstimateWDiff}) holds by taking $z=(t,x),\bar z=(t,\bar x)$. The estimate (\ref{EstimateWDiffTime}) can be deduced similarly, here we have to handle terms of the form  $I_i((t,x),(t,\bar x))-I_i((t,x),(s,\bar x))-I_i((s,x),(t,\bar x))+I_i((s,x),(s,\bar x))$, which can be done as in the proof of Corollary \ref{corollaryEstimateW}.
Now, by Proposition \ref{PropositionEstimateW} for
\begin{align*}
(\Pi_{x}^t {\color{blue}\RS{{4i}{r}}},\varphi)\coloneqq I_2\left(\int_{M_t}\varphi S_{x}^{\otimes 2}\left(\mathcal{W}^{(2)}{\color{blue}\RS{{4i}{r}}}\right)(z;\cdot)dz\right)
\end{align*}
the bounds (\ref{KeyEstimate}) - (\ref{KeyEstimateDifferenceTime}) hold for ${\color{blue}\RS{{4i}{r}}}$.\\

For the remaining symbols ${\color{blue}\tau}\in\mathcal{F}_-$ all of the estimates follow the same procedure given above for ${\color{blue}\RS{{4i}{r}}}$. First, one establishes (\ref{EstimateW}) and (\ref{EstimateWTime}) with the arguments from above. Then, given the functions $\left(\mathcal{W}^{(k;\varepsilon)}{\color{blue}\tau}\right)(t,x)$, the functions $\left(\mathcal{W}^{(k)}{\color{blue}\tau}\right)(t,x)$ are defined in the same manner as before. Due to the more simpler structure of these functions, the estimates (\ref{EstimateWDiff}) and (\ref{EstimateWDiffTime}) are even easier to obtain. Then, we can define the random variables $(\Pi_{x}^t{\color{blue}\tau},\varphi)$ and conclude estimates (\ref{KeyEstimate}) - (\ref{KeyEstimateDifferenceTime}) using Proposition \ref{PropositionEstimateW}. Thus, the theorem is proved.
\end{proof}


\begin{thebibliography}{99}
\bibitem{BerglundKuehn2017}
N. Berglund and C. Kuehn.
Model Spaces of Regularity Structures for Space-Fractional SPDEs.
\textit{Journal of Statistical Physics} \textbf{168}(2): 331--368, 2017.
\doi{10.1007/s10955-017-1801-3}.

\bibitem{BerglundKuehn2019}
N. Berglund and C. Kuehn.
Corrigendum to ``Regularity structures and renormalisation of FitzHugh-Nagumo SPDEs in three space dimensions''.
\textit{Electronic J. Probability} \textbf{24}(113): 1--22, 2019.
\doi{10.1214/19-EJP359}.

\bibitem{BrunedChandraChevyrevHairer2021}
Y. Bruned, A. Chandra, I. Chevyrev and M. Hairer.
Renormalising SPDEs in regularity structures.
\textit{Journal of the European Mathematical Society} \textbf{23}(3): 869--947, 2021.
\doi{10.4171/JEMS/1025}.

\bibitem{BrunedHairerZambotti2019}
Y. Bruned, M. Hairer, and L. Zambotti.
Algebraic renormalisation of regularity structures.
\textit{Invent. Math.} \textbf{215}(3): 1039--1156, 2019.
\doi{10.1007/s00222-018-0841-x}.

\bibitem{BuckmasterShkollerVicol}
T. Buckmaster, S. Shkoller and V. Vicol.
Nonuniqueness of weak solutions to the SQG equation.
\textit{Comm. Pure Appl. Math.} \textbf{72}(9): 1809--1874, 2019.
\doi{10.1002/cpa.21851}.

\bibitem {CaffarelliVasseur2010}
L. Caffarelli and A. Vasseur.
Drift diffusion equations with fractional diffusion and the quasi-geostrophic equation. 
\textit{Ann. Math.} \textbf{171}(3): 1903--1930, 2010.
\doi{10.4007/annals.2010.171.1903}.


\bibitem{ChaHairer_2016}
A. Chandra and M. Hairer.
An analytic BPHZ theorem for regularity structures.
\href{https://arxiv.org/abs/1612.08138}{arXiv:1612.08138}, 2016.





\bibitem {Constantin2}
P. Constantin.
Energy Spectrum of Quasigeostrophic Turbulence.
\textit{Phys. Rev. Lett.} \textbf{89}(18): 1804501--4, 2002.
\doi{10.1103/PhysRevLett.89.184501}.

\bibitem{Constantin}
P. Constantin.
\textit{Analysis of Hydrodynamic  Models.} CBMS-NSF Regional Conference Series in Applied Mathematics.
SIAM-Society for Industrial and Applied Mathematics, 2017.
\doi{10.1137/1.9781611974805}.

\bibitem{ConstantinMajdaTabak1}
P. Constantin, A. Majda and E. Tabak.
Formation of strong fronts in the 2-D quasigeostrophic thermal active scalar.
\textit{Nonlinearity} \textbf{7}: 1495--1533, 1994.
\doi{10.1088/0951-7715/7/6/001}.

\bibitem{CMT_1994}
P. Constantin, A.J. Majda and E.G. Tabak.
Singular front formation in a model for quasigeostrophic flow.
\textit{Phys. Fluids} \textbf{1}(6): 9--11, 1994.
\doi{10.1063/1.868050}.

\bibitem {ConstantinWu}
P. Constantin and J. Wu.
Behavior of solutions of 2D quasi-geostrophic equations.
\textit{SIAM J. Math.Anal.} \textbf{30}: 937--948, 1999.
\doi{10.1137/S0036141098337333}.

\bibitem{CrisanFlandoliHolm2018}
D. Crisan, F. Flandoli and D. D. Holm
Solution properties of a 3D stochastic Euler fluid equation.
\textit{J. Nonlinear Sci.}, 2018.
\doi{10.1007/s00332-018-9506-6}.

\bibitem{DaPratoDebussche2002}
G. Da Parto and A. Debussche.
Two-Dimensional Navier-Stokes Equations Driven by a Space-Time White Noise.
\textit{J. Funct. Anal.} \textbf{196}(1): 180--210, 2002.
\doi{10.1006/jfan.2002.3919}.


\bibitem{FlandoliSaal}
F. Flandoli and M. Saal.
mSQG equations in distributional spaces and point vortex approximation.
\textit{J. Evol. Equ.} \textbf{19}(4): 1071--1090, 2019.
\doi{10.1007/S00028-019-00506-8}.

\bibitem{FrederiksenKaneZidikheri13}
J. S. Frederiksen, T. J. O'Kane and M. J. Zidikheri.
Subgrid modelling for geophysical flows.
\textit{Phil. Trans. R. Soc. A} \textbf{371}: 20120166, 2013.
\doi{10.1098/rsta.2012.0166}.

\bibitem{FriHai14}
P. Fritz and M. Hairer.
\textit{A Course on Rough Paths. With an Introduction to Regularity Structures}. 
Springer International Publishing, 2014.
\doi{10.1007/978-3-319-08332-2}.

\bibitem{GRA_2014}
L. Grafakos.
\textit{Classical fourier analysis}. Graduate texts in mathematics. Third Edition.
Springer, New York, 2014.
\doi{10.1007/978-0-387-09432-8}.

\bibitem{Hairer_2014}
M. Hairer.
A theory of regularity structures.
\textit{Invent. Math.} \textbf{198}(2): 269--504, 2014.
\doi{10.1007/s00222-014-0505-4}.

\bibitem{hairer2017discretisations}
M. Hairer and K. Matetski.
Discretisations of rough stochastic PDEs.
\textit{Ann. Probab.} \textbf{46}(3): 1651--1709 , 2018.
\doi{10.1214/17-AOP1212}.


\bibitem{Hasselmann1976}
K. Hasselmann. 
Stochastic climate models, part I: Theory.
\textit{Tellus} \textbf{28}: 474--485, 1976.
\doi{10.1111/j.2153-3490.1976.tb00696.x}.

\bibitem {HeldPierreGarnerSwanson}
I.M. Held, R.T. Pierrehumbert, S.T. Garner and K.L. Swanson.
Surface quasi-geostrophic dynamics.
\textit{J. Fluid Mech.} \textbf{282}: 1--20, 1995.
\doi{10.1017/S0022112095000012}.


\bibitem{Holm2015}
D. D. Holm.
Variational principles for stochastic fluid dynamics.
\textit{Proc. R. Soc. A} \textbf{471}: 20140963, 2015.
\doi{10.1098/rspa.2014.0963}.

\bibitem {KiselevNazarovVolberg2007}
A. Kiselev, F. Nazarov and A. Volberg.
Global well-posedness for the critical 2D dissipative quasi-geostrophic equation. 
\textit{Invent. Math.} \textbf{167}(3): 445--453, 2007.
\doi{10.1007/s00222-006-0020-3}.

\bibitem{KOCH200122}
H. Koch and D. Tataru.
Well-posedness for the Navier–Stokes Equations.
\textit{Advances in Mathematics} \textbf{157}: 22--35, 2001.
\doi{10.1006/aima.2000.1937}.

\bibitem{LiuRoecknerZhu2013}
W. Liu, M. R\"ockner and X. Zhu.
Large deviation principles for the stochastic quasi-geostrophic equations.
\textit{Stochastic Process. Appl.} \textbf{123}(8): 3299--3327, 2013.
\doi{10.1016/j.spa.2013.03.020}.

\bibitem {Marchand}
F. Marchand.
Existence and regularity of weak solutions to the quasi-geostrophic equations in the spaces $L^p$ or $\dot H^{-1/2}$.
\textit{Commun. Math. Phys.} \textbf{277}:  45--67, 2008.
\doi{10.1007/s00220-007-0356-6}.

\bibitem{Memin2014}
E. M\'emin.
Fluid flow dynamics under location uncertainty.
\textit{Geophysical \& Astrophysical Fluid Dynamics} \textbf{108}(2): 119--146, 2014.
\doi{10.1080/03091929.2013.836190}.


\bibitem{NELSON1973211}
E. Nelson.
The free Markoff field.
\textit{J. Func. Anal.} \textbf{12}(2): 211--227, 1973.
\doi{10.1016/0022-1236(73)90025-6}.

\bibitem{Nua_2006}
D. Nualart.
\textit{The Malliavin calculus and related topics}. Probability and its applications. Second Edition.
Springer, Berlin Heidelberg, 2006.
\doi{10.1007/3-540-28329-3}.

\bibitem{Pedlosky1987}
J.~Pedlosky.
\textit{Geophysical Fluid Dynamics}. Second Edition.
Springer, New York, 1987.
\doi{10.1007/978-1-4612-4650-3}.

\bibitem {Resnick}
S. Resnick.
\textit{Dynamical Problems in Nonlinear Advective Partial Differential Equations}.
Ph. D. thesis University of Chicago, 1995, available at \url{https://www.proquest.com/docview/304242616}.

\bibitem{RZZ_2015}
M. R\"ockner{,} R. Zhu and X. Zhu.
Sub and supercritical stochastic quasi-geostrophic equation.
\textit{The Annals of Probability} \textbf{43}(3): 1202--1273, 2015.
\doi{10.1214/13-AOP887}.

\bibitem {Smithetal}
K. S. Smith, G. Boccaletti, C. C. Henning, I. Marinov,  C. Y. Tam, I. M. Held and G. K. Vallis.
Turbulent diffusion in the geostrophic inverse cascade.
\textit{J. Fluid Mech.} \textbf{469}: 13--48, 2002.
\doi{0.1017/S002211200200176}.

\bibitem{veraar2010}
M. Veraar.
Regularity of Gaussian white noise on the d-dimensional torus.
\textit{Banach Center Publications} \textbf{95}: 385--398, 2011.
\doi{10.4064/bc95-0-24}.


\bibitem{ZhuZhu_2014}
R. Zhu and X. Zhu.
Three-dimensional Navier-Stokes equations driven by space-time white noise.
\href{https://arxiv.org/abs/1406.0047}{arXiv:1406.0047v3}, 2017.

\bibitem{ZhuZhu_2014_2}
R. Zhu and X. Zhu.
Three-dimensional Navier-Stokes equations driven by space-time white noise.
\textit{Journal of Differential Equations} \textbf{259}(9): 4443--4508, 2015.
\doi{10.1016/j.jde.2015.06.002}.

\bibitem{ZhuZhu2017}
R. Zhu and X. Zhu.
Random Attractor Associated with the Quasi-Geostrophic Equation.
\textit{J. Dyn. Diff. Equat.} \textbf{29}: 289--322, 2017.
\doi{10.1007/s10884-016-9537-3}.

\end{thebibliography}

\end{document}